\numberwithin{thmcounter}{section}
\newaliascnt{thmauto}{thmcounter}
\newaliascnt{Defauto}{thmcounter}
\newaliascnt{exauto}{thmcounter}
\newaliascnt{lemauto}{thmcounter}
\newaliascnt{propauto}{thmcounter}
\newaliascnt{conjauto}{thmcounter}
\newaliascnt{corauto}{thmcounter}
\newaliascnt{remauto}{thmcounter}
\newaliascnt{consauto}{thmcounter}
\newaliascnt{notationauto}{thmcounter}
\newaliascnt{convauto}{thmcounter}
\newtheorem{athm}{Theorem}
\newtheorem{thm}[thmauto]{Theorem}
\newtheorem{lem}[lemauto]{Lemma}
\newtheorem{prop}[propauto]{Proposition}
\newtheorem{cor}[corauto]{Corollary}
\theoremstyle{remark}
\newtheorem{rem}[remauto]{Remark}
\theoremstyle{definition}
\newtheorem{defn}[Defauto]{Definition}
\newtheorem{ex}[exauto]{Example}
\newtheorem{notation}[notationauto]{Notation}
\newcommand{\FI}{\mathsf{FI}}
\newcommand{\FJ}{\mathsf{FJ}}
\newcommand{\OI}{\mathsf{OI}}
\newcommand{\Ab}{\mathsf{Ab}}
\newcommand{\C}{\mathcal C}
\newcommand{\D}{\mathcal D}
\newcommand{\FIopMod}{\mathrm{Mod}_{\FI^{\op}}}
\newcommand{\FIopmod}{\FIopMod}
\newcommand{\xmod}[1]{\mathrm{Mod}_{#1}}
\newcommand{\Catalan}{\mathrm{Catalan}}
\newcommand{\CB}{\mathsf{CB}}
\newcommand{\cb}{\pi}
\newcommand{\N}{\mathbb N}
\newcommand{\Z}{\mathbb Z}
\newcommand{\Q}{\mathbb Q}
\newcommand{\R}{\mathbb R}
\newcommand{\SG}{\mathfrak S}
\newcommand{\inject}{\hookrightarrow}
\newcommand{\surject}{\twoheadrightarrow}
\newcommand{\op}{\mathrm{op}}
\newcommand{\eps}{\varepsilon}
\newcommand{\X}{\chi}
\DeclareMathOperator{\id}{id}
\DeclareMathOperator{\Emb}{Emb}
\DeclareMathOperator{\colim}{colim}
\DeclareMathOperator{\Hom}{Hom}
\DeclareMathOperator{\im}{im}
\DeclareMathOperator{\coker}{coker}
\DeclareMathOperator{\Mat}{Mat}
\DeclareMathOperator{\res}{res}
\newcommand{\fn}[1]{\vphantom{\scalebox{1.3}{\mbox{\fbox{\tt{1}}}}}\raisebox{.3pt}{\scalebox{.7}{\mbox{\fbox{\tt{#1}}}}}}
\renewcommand{\fn}[1]{#1}
\title{On the tails of $\FI$-modules}
\author{Peter Patzt \and John D. Wiltshire-Gordon}
\date{\today}
\begin{document}
\begin{abstract}
We study the end-behavior of integer-valued $\FI$-modules.
Our first result describes the high degrees of an $\FI$-module in terms of newly defined tail invariants.
Our main result provides an equivalence of categories between $\FI$-tails and finitely supported modules for a new category that we call $\FJ$.  Objects of $\FJ$ are natural numbers, and morphisms are  infinite series with summands drawn from certain modules of Lie brackets.  
\end{abstract}

\maketitle


\section{Introduction}
Let $\FI$ be the category of finite sets and injections, and $R$ a commutative ring.  An \emph{$R$-valued $\FI$-module} is a functor $\FI \to \xmod R$. By default, we work with $R = \Z$.

If $M$ is a finitely generated $\Q$-valued $\FI$-module, Snowden \cite{Snowden13} proved that the Hilbert function
\begin{equation} \label{eq:dims}
n \mapsto \mathrm{dim} \, M_n
\end{equation}
eventually coincides with a polynomial. Shortly thereafter, Church--Ellenberg--Farb \cite{CEF15} showed that the decomposition of $M_n$ into irreducible $\SG_n$-representations satisfies a stability pattern termed ``multiplicity stability'' by Church--Farb \cite{CF13}.  Subsequently, \cite{CEFN14} proved that the eventual polynomiality of \eqref{eq:dims} holds over any field. 
Following these foundational results, the study of $\FI$-modules has seen notable successes including \cite{Nagpal15, CE17, LiRamos18, Ramos17, NSS17, Harman16} and applications including \cite{Calegari15,CP15,PX16,KM18,GL,Tosteson16}.

An $\FI$-module is \emph{presented in degrees $\le d$} if 
one can find generators and relations in degrees $\le d$.
We emphasize that the number of generators and relations may be infinite. Nevertheless, 
multiplicity stability still holds for $\Q$-valued $\FI$-modules presented in finite degree, where the stable multiplicities may be infinite. And for $R$-valued $\FI$-modules, presentation in finite degree is equivalent to a polynomiality notion of Dwyer \cite{Dwyer80}; see \autoref{defn:poly} and \cite[Theorem 3.30]{MPW}.

Our first theorem describes the high degrees of $R$-valued $\FI$-modules. It is driven by a new combinatorial basis of $\Z\FI(d,n)$ for $n\ge 2d-1$ that we call the Catalan Basis; see \autoref{sec:CB} and especially \autoref{cor:X}.

\begin{athm} \label{thm:main_concrete}
If $M$ is an $R$-valued $\FI$-module presented in in degrees $\leq d$, then there are $R$-modules $A_0, \dots, A_d$ such that there is an isomorphism of $R$-modules
\[
M_n \cong \bigoplus_{\ell=0}^{d} A_{\ell}^{\oplus \binom n\ell - \binom n{\ell-1}}
\]
for all $n \geq 2d-1$.
\end{athm}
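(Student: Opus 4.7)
Since $M$ is presented in degrees $\leq d$, fix a presentation by free $\FI$-modules
\begin{equation*}
F^1 \xrightarrow{\phi} F^0 \to M \to 0, \qquad F^i = \bigoplus_{k=0}^d \FI(k, -) \otimes_{R[\SG_k]} V_k^i,
\end{equation*}
with each $V_k^i$ an $R[\SG_k]$-module (not assumed finitely generated). Evaluating at $n$ gives $M_n = \coker(\phi_n)$.

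For $n \geq 2d-1$, the Catalan Basis (\autoref{cor:X}) should exhibit, for each $k \leq d$, a direct sum decomposition of $R$-modules
\begin{equation*}
R\FI(k, n) = \bigoplus_{\ell=0}^{k} R^{\binom n \ell - \binom n{\ell-1}} \otimes_R E_{k,\ell},
\end{equation*}
where $E_{k,\ell}$ is a fixed $R$-module depending only on $k$ and $\ell$. I would then verify naturality in the contravariant $\FI$-variable: for any morphism $f\colon [j] \to [k]$ in $\FI$, the precomposition map $R\FI(k, n) \to R\FI(j, n)$ sends the $\ell$-summand of the source to the $\ell$-summand of the target by a map of the form $\id \otimes \phi_{f,\ell}$ for some $R$-linear $\phi_{f, \ell}\colon E_{k, \ell} \to E_{j, \ell}$ independent of $n$. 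Specializing to $f \in \SG_k$, this forces the $\SG_k$-action on $R\FI(k, n)$ to preserve each summand and to act entirely on the $E_{k, \ell}$ factor.

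Granting this, the free $\FI$-module $\FI(k, -) \otimes_{R[\SG_k]} V$ evaluated at $n$ decomposes as
\begin{equation*}
\bigoplus_{\ell=0}^k R^{\binom n \ell - \binom n{\ell-1}} \otimes_R \bigl(E_{k, \ell} \otimes_{R[\SG_k]} V\bigr),
\end{equation*}
so $\phi_n$ is block-diagonal in $\ell$ with each block of the form $\id \otimes \psi_\ell$ for an $R$-linear map $\psi_\ell$ of $n$-independent $R$-modules. Taking cokernels summand by summand yields
\begin{equation*}
M_n \cong \bigoplus_{\ell=0}^d A_\ell^{\oplus (\binom n \ell - \binom n{\ell-1})}, \qquad A_\ell := \coker(\psi_\ell).
\end{equation*}

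The main obstacle is the construction underlying the Catalan Basis and verification of the stated functoriality. Concretely, one must produce a basis of $R\FI(k, n)$ for every $k \leq d$ that is compatible, uniformly in $n$, with all precomposition maps induced by $\FI$-morphisms into $[k]$; the bound $n \geq 2d - 1$ should be precisely the stability threshold at which such a basis can be defined consistently across all the relevant $k$, as carried out in \autoref{sec:CB}.
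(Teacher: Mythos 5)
Your plan is correct and is essentially the paper's own argument: the paper likewise identifies $\Z\FI(i_d-,n)$ for $n\ge 2d-1$ with $\Z\CB(i_d-,n)\cong\bigoplus_{\ell=0}^{d}\Xi(\ell)^{\oplus\Catalan(\ell,n)}$ as $\FI^{\op}_{\le d}$-modules via \autoref{cor:X}, and your $E_{k,\ell}$, $\phi_{f,\ell}$, and $A_\ell=\coker(\psi_\ell)$ are exactly $\Xi(\ell)_k$, $\Xi(\ell)_f$, and $M\otimes_\FI\Xi(\ell)$. The only cosmetic difference is that the paper avoids choosing an explicit presentation by writing $M_n\cong(i_d^*M)\otimes_{\FI_{\le d}}\Z\FI(i_d-,n)$ and pulling the decomposition through the tensor product, which is the same computation.
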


Even for field-valued $\FI$-modules, \autoref{thm:main_concrete} is new, as it guarantees the Hilbert polynomial expands nonnegatively in the $\Z$-basis of integer-valued polynomials $\{\binom{n}{\ell} - \binom{n}{\ell-1}\}_{\ell \in \mathbb{N}}$.  
\autoref{thm:main} will show that the $\FI$-action on the right-hand-side is determined by natural maps between the $R$-modules $A_0, \dots, A_d$ of \autoref{thm:main_concrete}.

\subsection{The tail invariants of an $\FI$-module}
The abelian groups $A_0, A_1, \dots$ from \autoref{thm:main_concrete} are functorial in the $\FI$-module $M$, and moreover, they depend only on the tail of $M$.  

We call $A_\ell$ the \emph{$\ell$-th tail invariant} of $M$. It can be computed by tensoring with a certain flat $\FI^\op$-module $\Xi(\ell)$ that we define momentarily:
\[
A_\ell \cong M \otimes_{\FI} \Xi(\ell).
\]
The symbol $\otimes_{\FI}$ denotes the functor tensor product, which is the quotient of $\bigoplus_n M_n \otimes \Xi(\ell)_n$ by the relation $m f \otimes \xi - m \otimes f \xi$ for all $\FI$-morphisms $f$, and elements $m \in M$, $\xi \in \Xi(\ell)$; see \autoref{sec:tensor-hom}.

\begin{defn}\label{defn:Xi(ell)}
Let $\Xi(\ell)$ denote the $\FI^\op$-module
\[ \Xi(\ell)_S = H_0(X_S, A_S),\]
where $X_S = \Emb(S, [\ell] \sqcup \mathbb{R})$ is a space of embeddings, and $A_S$ is the subspace of those $\phi \colon S \to [\ell] \sqcup \mathbb{R}$ with $[\ell] \not\subseteq \im \phi$. Injections act on $\Xi(\ell)$ by precomposition. 
\end{defn}

\begin{notation}\label{rem:x}
Fix an infinite increasing sequence  $x_1 < x_2 < \ldots$ of real numbers. Every homology class $\xi \in \Xi(\ell)_n$ is uniquely represented as 
a linear combination of embeddings $\phi \colon [n] \to [\ell] \sqcup \{x_1, \dots, x_{n-\ell}\}$. If we write such a $\phi$ in one-line notation, it denotes the homology class of $\phi$ in $\Xi(\ell)$.
\end{notation}

\begin{ex}
Writing functions $[n] \to [\ell] \sqcup \R$ in one-line notation, we have
\[
\begin{array}{cclcccccccc}
\Xi(0)_3 &\cong& \mathbb{Z} \cdot \{ & x_1x_2x_3, & x_1x_3x_2, &x_2x_1x_3, &x_2x_3x_1, &x_3x_1x_2, &x_3x_2x_1 & \} \\
\Xi(1)_3 &\cong& \mathbb{Z} \cdot \{ &1x_1x_2, &1x_2x_1, &x_11x_2, &x_1x_21, &x_21x_1, &x_2x_11 & \} \\
\Xi(2)_3 &\cong& \mathbb{Z} \cdot \{ &12x_1, &1x_12, &21x_1, &2x_11, &x_112, &x_121& \} \\
\Xi(3)_3 &\cong &\mathbb{Z} \cdot \{ &123,&132,&213,&231,&312,&321 &\}
\end{array}
\]
and $\Xi(\ell)_3 \cong 0$ for $\ell\ge 4$.
\end{ex}

\begin{rem}
The zeroth tail invariant of an $\FI$-module $M$ is given by the formula $\colim_{\OI} M$, where $\OI$ is the subcategory of $\FI$ consisting of the sets $[n]= \{1, \dots, n\}$ for all $n\in \N$ and order-preserving injections.  That the functor $M \mapsto \colim_{\OI} M$ is exact is a result of Isbell--Mitchell and Isbell \cite{IM73, Isbell74}.
\end{rem}

\subsection{The category of $\FI$-tails}

Define the category of \emph{R-valued $\FI$-tails} to be the Serre quotient
\[
 \frac{\{\mbox{$R$-valued $\FI$-modules presented in finite degree}\}}{\{\mbox{$R$-valued $\FI$-modules supported in finite degree}\}},
\]
whose objects correspond to end-behaviors of $\FI$-modules.  This category detects representation stability phenomena. 

\begin{defn}\label{defn:FJ}
Let $\FJ$ be the category whose objects are $\N = \{0,1,2,\dots\}$ and 
\[\FJ(\ell,m) = \Hom_{\FI^{\op}}(\Xi(\ell),\Xi(m)).\] 
\end{defn}

We give an inverse limit description of   $\FJ(\ell,m)$ in \autoref{sec:defFJ}, and 
a combinatorial description  as infinite series with summands expressed using Lie brackets and shuffle products  in \autoref{sec:combFJ}. We note that $\FJ$ has upwards and downwards maps. Indeed, $\FJ(\ell,m)$ is a torsion-free abelian group of infinite rank for all $\ell, m\in \N$. We also remark that $\FJ$ is a $\Z$-linear category, and it always acts linearly.

By construction, 
the family of $\FI^\op$-modules $\Xi(-)$ carries an action of $\FJ$ that commutes with the action of $\FI^\op$, and so defines an $\FI^\op \times \FJ$-module.

\begin{athm} \label{thm:main}
The functor
\[ M \longmapsto M \otimes_\FI \Xi(-)\]
induces an equivalence from the category of $R$-valued $\FI$-tails to the category of $R$-valued $\FJ$-modules supported in finite degree.
\end{athm}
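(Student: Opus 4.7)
My plan is to realize the equivalence via an adjunction $F \dashv G$, where
\[ G(V)_n := \Hom_\FJ\bigl(\Xi(-)_n,\, V\bigr),\]
and to verify that the unit and counit descend to isomorphisms on the relevant subcategories. Here $\Xi(-)_n$ denotes the $\FJ$-module $\ell \mapsto \Xi(\ell)_n$, the $\FI$-action on $G(V)$ is induced from the $\FI^\op$-action on $\Xi$, and the adjunction $\Hom_\FJ(F(M),V) \cong \Hom_\FI(M, G(V))$ is the usual tensor-hom.

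First I would confirm that $F$ factors through the Serre quotient. Theorem A gives that $F(M)$ is supported in degrees $\le d$ whenever $M$ is presented in degrees $\le d$, since the decomposition only involves $A_\ell$ for $\ell \le d$. Dually, $F$ annihilates any module supported in finite degree: if $M_n = 0$ for all $n \ge N$ and $\xi \in \Xi(\ell)_n$ with $n < N$, I extend the embedding $[n] \to [\ell] \sqcup \R$ underlying $\xi$ to a larger domain $[N]$, producing a lift $\xi' \in \Xi(\ell)_N$ along an injection $f\colon [n]\hookrightarrow [N]$ with $f^*\xi' = \xi$; then $m \otimes \xi = (mf)\otimes \xi' = 0$ since $mf \in M_N = 0$.

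Next I would show the counit $\varepsilon\colon FG(V)\to V$ is an isomorphism for $V$ supported in finite degree. Writing $FG(V)_m$ as a coend, the map $\varepsilon_m$ is evaluation $(\phi,\xi)\mapsto \phi_m(\xi)$; surjectivity uses that any $v\in V_m$ can be hit by pairing with a suitable generator of $\Xi(m)_m$, while injectivity uses the $\FJ$-equivariance relations built into the coend. Since $V$ has finite support, only finitely many slots $\Xi(\ell)_n$ actually contribute, making this computation tractable.

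The main obstacle will be showing the unit $\eta\colon M\to GF(M)$ is an isomorphism in the Serre quotient, i.e., that
\[ GF(M)_n = \Hom_\FJ\bigl(\Xi(-)_n,\, F(M)\bigr)\]
agrees with $M_n$ in each sufficiently high degree $n$. I would compare both sides against Theorem A's decomposition $M_n \cong \bigoplus_\ell A_\ell^{\binom n\ell - \binom n{\ell-1}}$. The key claim, which I would extract from the Catalan Basis machinery of \autoref{sec:CB}, is that $\Xi(-)_n$ is \emph{free} as an $\FJ$-module, with $\binom n\ell - \binom n{\ell-1}$ generators in degree $\ell$; granted this, the Hom collapses to $\bigoplus_\ell A_\ell^{\binom n\ell - \binom n{\ell-1}}$, and the canonical unit map $m\mapsto (\xi \mapsto m\otimes\xi)$ identifies with Theorem A's isomorphism. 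The delicate part is establishing this freeness: one must check that the Catalan Basis both spans $\Xi(-)_n$ under the $\FJ$-action and imposes no unexpected relations. I expect this to require combining the combinatorial structure in \autoref{sec:CB} with the explicit Lie-bracket/shuffle description of $\FJ(\ell,m)$ from \autoref{sec:combFJ} to verify, slot by slot, that the Catalan generators behave as a free $\FJ$-basis.
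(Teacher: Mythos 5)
Your preliminary reductions (that $M\otimes_\FI\Xi(-)$ is supported in degrees $\le d$ when $M$ is presented in degrees $\le d$, and that it kills modules supported in finite degree, via extending embeddings) are fine, and the adjunction framing $F\dashv G$ with $G(V)_n=\Hom_\FJ(\Xi(-)_n,V)$ is a legitimate alternative architecture to the paper's direct verification of essential surjectivity, faithfulness and fullness. But the lemma your whole unit computation rests on is false as stated: $\Xi(-)_n$ is \emph{not} a free $\FJ$-module with $\binom n\ell-\binom n{\ell-1}$ generators in degree $\ell$. A free $\FJ$-module $\bigoplus_\ell \FJ(\ell,-)^{\oplus c_\ell}$ has infinite rank in every degree where it is nonzero, since each $\FJ(\ell,m)$ is torsion-free of infinite rank, whereas $\Xi(m)_n$ has finite rank $n!$. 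Passing to $\FJ_{\le n}$ does not repair this: $\FJ_{\le n}(\ell,n)\cong\Z\SG_n$ for every $\ell\le n$ (restricted to degrees $\le n$, the target $\Xi(n)$ lives only in degree $n$, so a natural transformation is just a $\Z\SG_n$-map out of the free rank-one module $\Xi(\ell)_n$), so a free $\FJ_{\le n}$-module on $\sum_{\ell}\bigl(\binom n\ell-\binom n{\ell-1}\bigr)$ generators has rank strictly larger than $n!=\operatorname{rank}\Xi(n)_n$ in degree $n$ once $n\ge 2$. So the Hom group $\Hom_\FJ(\Xi(-)_n,V)$ does not ``collapse'' formally.

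What the Catalan basis actually gives (\autoref{cor:X}) is an $\FI^{\op}_{\le d}$-module decomposition $i_d^*\Z\FI(-,n)\cong\bigoplus_{\ell\le d} i_d^*\Xi(\ell)^{\oplus(\binom n\ell-\binom n{\ell-1})}$ for $n\ge 2d-1$; applying $\Hom_{\FI^{\op}_{\le d}}(-,i_d^*\Xi(-))$ yields a freeness statement for an $\FJ_{\le d}$-module built from \emph{truncated} Homs, not for $\Xi(-)_n=\Hom_{\FI^\op}(\Z\FI(-,n),\Xi(-))$. The statement you actually need is the weaker, natural-in-$n$ isomorphism $\Hom_\FJ(\Xi(-)_n,V)\cong\bigoplus_\ell V_\ell^{\oplus(\binom n\ell-\binom n{\ell-1})}$ for $V$ supported in finite degree and $n$ large, and bridging from the truncated picture to genuine $\FJ$-morphisms is precisely the hard content of the theorem: in the paper it is carried by the submodules $\Theta_{\le d}(\ell)\subseteq\Z\FI(d,-)$ with $\Theta_{\le d}(\ell)\otimes_\FI\Xi(-)\cong\FJ_{\le d}(\ell,-)$ (\autoref{prop:Thetahomcovariant}), the lifting result \autoref{prop:ThetaFJopmodule}, and the surjectivity of the restrictions $\FJ(\ell,m)\surject\FJ_{\le d}(\ell,m)$ from \autoref{thm:FJ}. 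The same gap infects your counit step, where injectivity of the evaluation map is asserted rather than proved; injectivity is exactly where the flatness of $\Xi(\ell)$ and this $\Theta$-type machinery (or some substitute) must enter. As it stands, the proposal reduces the theorem to two unproven claims, one of which is false in the literal form given.
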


\autoref{thm:main} is the integral generalization of the following surprising result of Sam--Snowden.
\begin{thm}[{{\cite[Theorem 2.5.1]{SS16}}}] \label{thm:ssrational}
The category of $\mathbb Q$-valued $\FI$-tails is equivalent to the category of $\mathbb Q$-valued $\FI$-modules supported in finite degree.
\end{thm}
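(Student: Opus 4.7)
The approach is to construct an explicit equivalence, leveraging two inputs that are available over $\mathbb{Q}$ but not over $\Z$: semisimplicity of $\mathbb{Q}[\SG_n]$-representations, and the multiplicity stability theorem of Church--Ellenberg--Farb. Because the statement is an equivalence of Serre quotients, I would try to define a functor $T$ from $\mathbb{Q}$-valued $\FI$-modules presented in finite degree to $\mathbb{Q}$-valued $\FI$-modules supported in finite degree, show it is exact and annihilates the subcategory being quotiented out, and then verify that the induced functor on $\FI$-tails is an equivalence.

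First I would set up $T$. For any $\FI$-module $M$ presented in degree $\leq d$, multiplicity stability yields, for $n \gg 0$, a decomposition $M_n \cong \bigoplus_\lambda V(\lambda)_n^{\oplus c_\lambda(M)}$ with stable multiplicities $c_\lambda(M) \in \mathbb{N}$. Define $T(M)$ to be the supported-in-finite-degree $\FI$-module with $T(M)_k = \bigoplus_{\lambda \vdash k} V_\lambda^{\oplus c_\lambda(M)}$ in degree $k$, and with $\FI$-morphisms acting via natural maps extracted from the stable range of $M$. Functoriality comes from the fact that any $\FI$-map $f\colon M \to N$ respects the $\SG_n$-isotypic decompositions in high degree. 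Exactness of $T$ follows from semisimplicity together with the exactness of the \emph{stabilization} operation on $\FI$-modules in finite presentation degree. Since any $\FI$-module supported in finite degree has all stable multiplicities zero, $T$ kills the subcategory being quotiented out and descends to a functor $\bar T$ on $\FI$-tails.

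For the quasi-inverse, I would define $S$ by sending a supported-in-finite-degree $\FI$-module $N$, whose $\SG_k$-representations $N_k$ decompose as $\bigoplus_\lambda V_\lambda^{\oplus a_{k,\lambda}}$, to the class in $\FI$-tails of the induced $\FI$-module $\bigoplus_{k,\lambda} M(\lambda)^{\oplus a_{k,\lambda}}$, where $M(\lambda) = \mathrm{Ind}_{\SG_{|\lambda|}}^{\FI} V_\lambda$. The identity $\bar T \circ S \simeq \mathrm{id}$ reduces to the classical calculation of stable multiplicities of the standard modules $M(\lambda)$, where only $\lambda$ itself contributes in its own degree. For $S \circ \bar T \simeq \mathrm{id}$, I would argue by d\'evissage: a $\mathbb{Q}$-valued $\FI$-module presented in finite degree admits (after possibly ignoring a submodule supported in finite degree) a finite filtration with graded pieces of the form $M(\lambda)$, so in the Serre quotient it becomes a finite direct sum of such standards, exactly reconstructed by $S(\bar T(M))$.

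The main obstacle is full faithfulness of $\bar T$ at the level of morphisms. Here I would directly compute $\Hom$ and $\mathrm{Ext}^1$ in both categories between the generating objects $M(\lambda)$ and $M(\mu)$: on the supported-in-finite-degree side these are $\SG$-equivariant computations that vanish unless $\lambda = \mu$, while in the $\FI$-tails category they are computed as colimits of $\Hom_{\FI}(M(\lambda), M(\mu)/N)$ over submodules $N \subseteq M(\mu)$ supported in finite degree, and must match. Semisimplicity over $\mathbb{Q}$ is the essential ingredient making this $\mathrm{Ext}^1$-comparison tractable; it is precisely where the argument fails over $\Z$ and where the finer theory developed in the present paper, with its introduction of $\FJ$ and the $\Xi(\ell)$, becomes necessary.
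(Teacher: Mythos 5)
You should first note that the paper does not prove this statement at all: it is quoted from Sam--Snowden \cite{SS16}, whose argument goes through the structure theory of equivariant $\mathrm{Sym}(\mathbb{C}^\infty)$-modules (saturation, local cohomology, and the classification of injectives), not through multiplicity stability, so there is no internal proof to compare against. Judged on its own terms, your sketch rests on a false premise: semisimplicity of $\Q\SG_n$ does not make either side of the equivalence semisimple, and in fact both sides are non-semisimple. Concretely, let $K \subset M((1)) = \Q\FI([1],-)$ be the submodule with $K_n = V_{(n-1,1)}$ (vectors with coordinate sum zero). The sequence $0 \to K \to M((1)) \to \underline{\Q} \to 0$ does not split even at the level of tails: any generic splitting would send $1$ to an $\SG_n$-invariant vector, i.e.\ a multiple of $e_1+\cdots+e_n$, and these are not compatible with the inclusions $[n] \hookrightarrow [n+1]$. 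Correspondingly, on the finitely supported side $\mathrm{Ext}^1$ between the simples $V_\emptyset$ in degree $0$ and $V_{(1)}$ in degree $1$ is nonzero. This breaks your claims that Hom and $\mathrm{Ext}^1$ between generators ``vanish unless $\lambda = \mu$,'' that a filtered object ``becomes a finite direct sum of standards'' in the Serre quotient, and the general plan of checking the equivalence on multiplicities alone. The same module $K$ refutes your d\'evissage step: $\dim K_n = n-1$ can never equal a sum of terms $\binom{n}{|\lambda|}\dim V_\lambda$, so $K$ (finitely presented) admits no generic filtration with graded pieces $M(\lambda)$, even up to finitely supported error. Your unit computation is also incorrect: by Pieri's rule $M((1))_n \cong V_{(n)} \oplus V_{(n-1,1)}$, so its stable multiplicities are $c_\emptyset = c_{(1)} = 1$ (in general every $\mu$ obtained from $\lambda$ by removing a horizontal strip contributes), and hence your $\bar T \circ S$ sends the simple $V_{(1)}$ in degree $1$ to a length-two object rather than back to itself.

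The deeper gap is that $T$ is never actually constructed: you define it only on underlying graded pieces, and ``natural maps extracted from the stable range'' is not a construction. Because neither category is semisimple, those connecting maps carry exactly the information---extensions between tails---that the equivalence must preserve; likewise your $S$, being a direct sum of induced modules, forgets all extension data of $N$ and cannot be a quasi-inverse. Producing the connecting maps functorially, equivalently computing morphisms in the quotient category between the relevant generators, is essentially the entire content of the theorem: Sam--Snowden do this via saturation and local cohomology, and the integral substitute developed in the present paper is precisely the pair $\Xi(\ell)$, $\FJ$ together with the exact functor $M \mapsto M \otimes_\FI \Xi(-)$.
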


\subsection{Polynomial degree}

Recall the following definition originally due to Dwyer \cite{Dwyer80}; this concrete definition coincides with Djament--Vespa's \cite{DjamentVespa19} ``weak polynomial degree'' for $\FI$-modules presented in finite degree.

\begin{defn}\label{defn:poly}
An $\FI$-module $M$ that is presented in finite degree is said to have \emph{polynomial degree $\le 0$} if it is eventually constant, and \emph{polynomial degree $\leq d$} if $\coker (M \to \Sigma M)$ has polynomial degree $\leq (d-1)$, where $\Sigma M = M([1] \sqcup -)$ is the shift. 
\end{defn}

We remark that the assumption that $M$ is presented in finite degree implies that $\ker(M\to \Sigma M)$ is supported in finite degree (see for example \cite[Theorem 3.30]{MPW}). Over a field, an $\FI$-module has polynomial degree $\le d$ if and only if the degree of the Hilbert polynomial \eqref{eq:dims} is at most $d$.

To formulate a version of  \autoref{thm:main} for tails  of $\FI$-modules of polynomial degree $\le d$, let us define the following category.

\begin{defn}\label{defn:FJd}
Let $\FJ_{\le d}$ be the category whose objects are $ \{0,1,2,\dots,d\}$ and 
\[\FJ_{\le d}(\ell,m) = \Hom_{\FI^{\op}_{\le d}}(i^*_d\Xi(\ell),i^*_d\Xi(m)),\] 
where $i^*_d$ denotes the inclusion of  the full subcategory $\FI_{\le d} \subset \FI$ of all sets with cardinality at most $d$.
\end{defn}

It turns out that for $\ell,m \le d$ we get \[\FJ_{\leq d}(\ell, m) \cong \FJ(\ell, m) / I_d(\ell,m),\] where $I_d(\ell,m)$ is spanned by composites $\ell \to d+1 \to m$; see \autoref{prop:factoring}.  Every $\FJ$-module supported in degrees $\{0, \dots, d\}$ is automatically an $\FJ_{\leq d}$-module, because $I_{d}(\ell,m)$ already acts by zero. Vice versa, an $\FJ_{\le d}$-module can be extended to an $\FJ$-module by zeros. By this logic, we consider $\xmod{\FJ_{\le d}}$ to be a subcategory of $\xmod{\FJ}$.

\begin{athm} \label{thm:degrees}
Under the equivalence in \autoref{thm:main}, the subcategory of  
$\FI$-tails of polynomial degree $\leq d$ corresponds to the subcategory of $\FJ_{\leq d}$-modules. 
\end{athm}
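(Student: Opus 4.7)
I plan to prove the equivalent statement that an $\FI$-tail $M$ has polynomial degree $\le d$ if and only if its image $\Phi(M) := M \otimes_{\FI} \Xi(-)$ under \autoref{thm:main} vanishes in degrees $> d$.  Since an $\FJ$-module supported in $\{0,\dots,d\}$ is automatically an $\FJ_{\le d}$-module (as noted after \autoref{defn:FJd}), this gives the desired correspondence of full subcategories.  The argument will proceed by induction on $d$, with the inductive step driven by a shift formula for tail invariants.

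The key step will be the natural isomorphism
\[ A_\ell\bigl(\coker(M \to \Sigma M)\bigr) \;\cong\; A_{\ell+1}(M). \]
Since $\Xi(\ell)$ is flat, tensoring the short exact sequence $0 \to M \to \Sigma M \to \coker(M \to \Sigma M) \to 0$ --- exact in the tail category because $\ker(M \to \Sigma M)$ is supported in finite degrees --- with $\Xi(\ell)$ yields a short exact sequence on tail invariants.  Hence it suffices to exhibit a natural splitting
\[ A_\ell(\Sigma M) \;\cong\; A_\ell(M) \oplus A_{\ell+1}(M), \]
with $A_\ell(M) \to A_\ell(\Sigma M)$ the inclusion of the first summand.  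Writing $\Sigma M = M \circ \eta$ for $\eta \colon \FI \to \FI$, $\eta(S) = [1] \sqcup S$, the coend identity $\Sigma M \otimes_{\FI} \Xi(\ell) = M \otimes_{\FI} \eta_! \Xi(\ell)$ reduces the problem to a natural $\FI^{\op}$-module decomposition
\[ \eta_! \Xi(\ell) \;\cong\; \Xi(\ell) \oplus \Xi(\ell+1), \]
whose summands can be read off the embedding description in \autoref{rem:x}: the first corresponds to configurations in which the extra element of $[1]$ lands in $\R$, the second to those in which it lands on the $[\ell]$-part, extending it to $[\ell+1]$.  One may alternatively verify the shift formula on the projective generators $M(k) := R[\FI(k,-)]$, where partitioning injections $[k] \inject T \sqcup [1]$ by whether they hit the new element gives $\Sigma M(k) \cong M(k) \oplus M(k-1)^{\oplus k}$.

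With the shift formula in hand, the induction proceeds cleanly.  The base case $d = 0$ is immediate from \autoref{thm:main}: polynomial degree $\le 0$ means $M$ is isomorphic in the tail category to a constant $\FI$-module, and a direct computation shows the constant $\FI$-module with value $A$ has tail invariants $A$ in degree $0$ and zero elsewhere (using that $\colim_{\FI^{\op}} \Xi(\ell)$ vanishes for $\ell \ge 1$).  The inductive step reads
\begin{align*}
M \text{ of polynomial degree} \le d
&\iff \coker(M \to \Sigma M) \text{ of polynomial degree} \le d-1 \\
&\iff A_\ell\bigl(\coker(M \to \Sigma M)\bigr) = 0 \text{ for all } \ell > d-1 \\
&\iff A_m(M) = 0 \text{ for all } m > d,
\end{align*}
using the inductive hypothesis at the second step and the key lemma at the third.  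The principal obstacle is the naturality of the decomposition $\eta_! \Xi(\ell) \cong \Xi(\ell) \oplus \Xi(\ell+1)$: since $\eta_!$ is a Kan extension rather than a levelwise construction, the splitting must be verified compatible with all $\FI^{\op}$-morphisms, not merely at individual levels.
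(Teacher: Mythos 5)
Your plan is essentially correct, and its two halves relate to the paper differently. The forward half --- polynomial degree $\le d$ implies $A_\ell(M)=M\otimes_\FI\Xi(\ell)$ vanishes for $\ell>d$ --- is the paper's own argument (\autoref{cor:polytailinvariants}): your key shift formula $A_\ell(\coker(M\to\Sigma M))\cong A_{\ell+1}(M)$ is obtained there by comparing the right-exact sequence $M\otimes_\FI\Xi(\ell)\to\Sigma M\otimes_\FI\Xi(\ell)\to\coker(M\to\Sigma M)\otimes_\FI\Xi(\ell)\to 0$ with the one coming from $\Xi(\ell)\to\Z\FI(-,[1]\sqcup-)\otimes_\FI\Xi(\ell)\to\Omega\Xi(\ell)\cong\Xi(\ell+1)\to 0$ of \autoref{prop:gammases} (your $\eta_!\Xi(\ell)$ is exactly $\Z\FI(-,[1]\sqcup-)\otimes_\FI\Xi(\ell)$). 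The backward half is where you genuinely diverge: you run the same induction in reverse to show that vanishing of $A_\ell(M)$ for $\ell>d$ forces polynomial degree $\le d$, whereas the paper instead shows only that every $\FJ$-module supported in $\{0,\dots,d\}$ admits \emph{some} preimage of polynomial degree $\le d$, built from the modules $\Theta_{\le d}(\ell)\subseteq\Z\FI(d,-)$ via \autoref{prop:ThetaFJopmodule} and \autoref{lem:thetapoly}. Your route proves a slightly stronger statement, makes the invariance of polynomial degree under tail-isomorphism automatic, and avoids re-entering the $\Theta$-machinery for this part; the paper's route avoids your base case altogether. Speaking of which, your base case $d=0$ is the soft spot of the induction: rather than appealing to \autoref{thm:main} there, anchor the induction at degree $-1$ (``eventually zero''), where both directions follow from \autoref{thm:main_concrete}, exactly as in \autoref{cor:polytailinvariants}; the step from $-1$ to $0$ is then the same as every other step.

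One caution about your key decomposition. The isomorphism $\eta_!\Xi(\ell)\cong\Xi(\ell)\oplus\Xi(\ell+1)$ is true --- it is precisely the split exact sequence of \autoref{prop:gammases} combined with $\Omega\Xi(\ell)\cong\Xi(\ell+1)$ --- but it cannot be ``read off'' by sorting classes according to whether the new point is occupied: that case division is only a filtration of $\FI^\op$-modules, since precomposition can vacate the new point and push a class from the second case into the first, and the paper's section accordingly needs a correction term, namely $(s,[\phi])\mapsto f_s\otimes[\phi]-\big((S\subset[1]\sqcup S)\otimes[\phi_s]\big)$. The same naturality problem defeats the proposed check on projective generators: $\Sigma\,\Z\FI(k,-)\cong\Z\FI(k,-)\oplus\Z\FI(k-1,-)^{\oplus k}$ holds as $\FI$-modules but is not natural with respect to maps between the free modules, which is what one would need to transport the splitting to a general finitely presented $M$. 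Fortunately your argument never needs the splitting: the right-exact sequence of \autoref{prop:gammases}, right-exactness of $M\otimes_\FI-$, flatness of $\Xi(\ell)$ (\autoref{prop:Xiflat}), and the vanishing of tail invariants on finitely supported modules already yield the shift formula and the short exact sequence of tail invariants you use. With those substitutions your proof goes through and, for the backward direction, is a genuine alternative to the paper's.
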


The following corollary follows from \autoref{prop:XiKan}, which says we can understand $\FJ_{\le d}(\ell,m)$ as a subset $ \Z\SG_d$. 

\begin{cor} \label{cor:Morita}
There exists a $(d+1)\times(d+1)$ matrix ring $Q_d \subseteq \Mat_{d+1}(\Z \SG_d)$ whose category of right modules is equivalent to the category of $\FI$-tails of polynomial degree $\leq d$. 
\end{cor}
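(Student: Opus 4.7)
By \autoref{thm:degrees}, the category of $R$-valued $\FI$-tails of polynomial degree $\leq d$ is equivalent to the category of $\FJ_{\leq d}$-modules (supported in degrees $\{0,\dots,d\}$ by definition of $\FJ_{\leq d}$). Since $\FJ_{\leq d}$ is a $\Z$-linear category with the finite object set $\{0,1,\dots,d\}$, the standard path-algebra construction gives a ring
\[
Q_d \;=\; \bigoplus_{\ell,m=0}^{d} \FJ_{\leq d}(\ell,m),
\]
with multiplication induced by composition (and zero on mismatched indices). Modules over $Q_d$ are the same as additive functors $\FJ_{\leq d}\to \Ab$, i.e., $\FJ_{\leq d}$-modules. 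So the plan is to identify $Q_d$ with the promised subring of $\Mat_{d+1}(\Z\SG_d)$.

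Next, I would invoke \autoref{prop:XiKan} to embed each Hom-group $\FJ_{\leq d}(\ell,m)$ into $\Z\SG_d$. A natural such embedding proceeds by restricting to the top degree: since the only injections $[d]\to[d]$ are bijections, evaluation at $[d]$ identifies $\Xi(d)_d$ with $\Z\SG_d$ as right $\SG_d$-modules, and an $\FI_{\leq d}^{\op}$-morphism $\Xi(\ell)\to\Xi(m)$ is determined by its component at $[d]$. Arranging these inclusions $\FJ_{\leq d}(\ell,m)\hookrightarrow \Z\SG_d$ into the $(\ell,m)$-entry of a $(d+1)\times (d+1)$ matrix produces an embedding of abelian groups
\[
Q_d \;\hookrightarrow\; \Mat_{d+1}(\Z\SG_d).
\]

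The key compatibility to verify is that composition in $\FJ_{\leq d}$ corresponds precisely to matrix multiplication on the right-hand side; equivalently, that the composite of two morphisms, computed in $\Z\SG_d$ at the top degree, is given by multiplication in the group ring. This is a direct consequence of the evaluation-at-$[d]$ functor being both faithful (by \autoref{prop:XiKan}) and monoidal with respect to composition: if $f \in \FJ_{\leq d}(\ell,m)$ and $g\in \FJ_{\leq d}(m,n)$ evaluate at $[d]$ to $a,b\in\Z\SG_d$ under the identification $\Xi(d)_d\cong \Z\SG_d$, then $g\circ f$ evaluates to the product $ba\in\Z\SG_d$, exactly matching the $(\ell,n)$-entry of the matrix product. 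Once this is established, $Q_d$ is realized as a subring of $\Mat_{d+1}(\Z\SG_d)$ and its module category is tautologically equivalent to the category of $\FJ_{\leq d}$-modules, hence to $\FI$-tails of polynomial degree $\leq d$.

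The main obstacle is the third step above: confirming that the embeddings provided by \autoref{prop:XiKan} carry categorical composition to group-ring multiplication in the expected way. This hinges on unwinding the Lie-bracket-and-shuffle description of morphisms in $\FJ_{\leq d}$ from \autoref{sec:combFJ} at the top degree, where the higher-degree corrections vanish and only the $\SG_d$-action survives. Once this identification is confirmed, the remainder of the argument is formal Morita-style packaging of a $\Z$-linear category with finitely many objects as a ring.
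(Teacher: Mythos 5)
Your proposal is correct and follows essentially the route the paper intends: \autoref{thm:degrees} plus the category-algebra construction for the finite $\Z$-linear category $\FJ_{\le d}$, with the Hom-groups embedded into $\Z\SG_d$ via evaluation at the top degree (this injectivity is \autoref{prop:mapseq}, with $\Xi(m)_d\cong\Z\SG_d$ coming from \autoref{prop:XiKan}), and compatibility of composition with group-ring multiplication following from naturality, as the paper records in \autoref{sec:combFJ}. Only convention-level details differ: the embedding of $\FJ_{\le d}(\ell,m)$ lands in $\Xi(m)_d$ (not $\Xi(d)_d$), which is a free rank-one \emph{left} $\Z\SG_d$-module on $\xi_{d,m}$, and naturality gives $g\circ f\mapsto ab$ rather than $ba$, which merely fixes how the matrix entries are arranged.
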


\begin{ex}
The following $Q_0, Q_1, Q_2$ are examples for \autoref{cor:Morita}.
\begin{itemize}
\item $Q_0 = \Mat_1(\mathbb{Z} \SG_0) \cong \Z$. (This simply means that the tails of eventually-constant $\FI$-modules are in bijection with abelian groups.)
\item $Q_1 = \begin{bmatrix} \Z \cdot \fn{1} & \Z \cdot \fn{1} \\ 0 & \Z \cdot \fn{1} \end{bmatrix} \subset \Mat_2(\mathbb{Z} \SG_1)$.
\item $Q_2 = \begin{bmatrix} \mathbb{Z} \cdot \fn{12} + \mathbb{Z} \cdot \fn{21} & \mathbb{Z} \cdot (\fn{12}+\fn{21}) & \mathbb{Z} \cdot \fn{12} + \mathbb{Z} \cdot \fn{21} \\  \mathbb{Z} \cdot (\fn{12}-\fn{21}) & \mathbb{Z} \cdot \fn{12} & \mathbb{Z} \cdot \fn{12} + \mathbb{Z} \cdot \fn{21} \\  \mathbb{Z} \cdot (\fn{12} - \fn{21}) & 0 & \mathbb{Z} \cdot \fn{12} + \mathbb{Z} \cdot \fn{21} \end{bmatrix} \subset \Mat_3(\mathbb{Z} \SG_2)$, 
which is a subring with $\mathbb{Z}$-rank $12$.  So a quadratic tail can be encoded as a single abelian group with $12$ compatible endomorphisms.
\end{itemize}
\end{ex}

\subsection{Computing the tail invariants from a presentation matrix} \label{sec:compute}
Any finitely presented $\FI$-module is the cokernel of a map between free $\FI$-modules, and this map is described by a presentation matrix.  If $M$ is generated in degrees $a_1, \ldots, a_g$ and related in degrees $b_1, \ldots, b_r$, then the presentation matrix takes the form
\[
\begin{blockarray}{ccccc}
 & b_1 & b_2 & \cdots & b_r \\
\begin{block}{c[cccc]}
 a_1  &  &  & & \\
 a_2  &  &  & & \\
\vdots & & & & \\
 a_g  & & & & \\
\end{block}\end{blockarray}\vspace{-1.5ex}\]
where the entry in position $(i,j)$ is a formal $\mathbb{Z}$-linear combination of injections $[a_i] \to [b_j]$.  We call such a matrix an \emph{$\FI$-matrix}.  

\begin{athm} \label{thm:compute}
If $Z$ is a presentation matrix for an $\FI$-module $M$, and if $W$ is the $\FJ$-module corresponding to the tail of $M$, then
\[
W_\ell \cong \mathrm{coker} \, \Xi(\ell)_Z
\]
where $\Xi(\ell)_Z$ denotes the integer block matrix obtained by evaluating the module $\Xi(\ell)$ at the entries of the $\FI$-matrix $Z$.
\end{athm}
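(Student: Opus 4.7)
The plan is to apply the right-exact functor $-\otimes_\FI \Xi(\ell)$ to the presentation of $M$ encoded by $Z$ and read off the cokernel using a Yoneda-style identification.

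First, I would unpack the meaning of the presentation matrix: the data of $Z$ is exactly the data of a right-exact sequence of $\FI$-modules
\[
\bigoplus_{j} \Z\FI(b_j,-) \xrightarrow{\;Z\;} \bigoplus_{i} \Z\FI(a_i,-) \longrightarrow M \longrightarrow 0,
\]
in which the $(i,j)$-component is precomposition with $Z_{ij}\in \Z\FI(a_i,b_j)$. Since the functor tensor product $-\otimes_\FI \Xi(\ell)$ is right exact (see \autoref{sec:tensor-hom}) and since $W_\ell = M\otimes_\FI \Xi(\ell)$ by the construction of the tail invariants appearing in \autoref{thm:main}, applying it gives
\[
\bigoplus_{j} \Z\FI(b_j,-)\otimes_\FI \Xi(\ell) \longrightarrow \bigoplus_{i} \Z\FI(a_i,-)\otimes_\FI \Xi(\ell) \longrightarrow W_\ell \longrightarrow 0.
\]

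Next, I would invoke the co-Yoneda identity for functor tensor products: for any $\FI^\op$-module $N$ and any $a\in \N$, there is a natural isomorphism
\[
\Z\FI(a,-)\otimes_\FI N \;\cong\; N_a.
\]
Specializing to $N = \Xi(\ell)$ rewrites the previous sequence as
\[
\bigoplus_{j} \Xi(\ell)_{b_j} \longrightarrow \bigoplus_{i} \Xi(\ell)_{a_i} \longrightarrow W_\ell \longrightarrow 0,
\]
so the theorem reduces to identifying the connecting map with $\Xi(\ell)_Z$. By naturality of the co-Yoneda isomorphism, precomposition with a single morphism $f\in \FI(a_i,b_j)$ on representables corresponds to the map $\Xi(\ell)(f)\colon \Xi(\ell)_{b_j}\to \Xi(\ell)_{a_i}$, which is literally the value of $\Xi(\ell)$ on $f$. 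Extending $\Z$-linearly, the entry $Z_{ij}\in \Z\FI(a_i,b_j)$ induces the entry $\Xi(\ell)(Z_{ij})$ of the block matrix $\Xi(\ell)_Z$.

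The main obstacle is merely bookkeeping: one has to confirm that the variance and block-indexing conventions match across the presentation (covariant in $\FI$, acting by postcomposition on generators and by precomposition between representables) and the contravariant action on $\Xi(\ell)$. No substantive new input is required beyond right-exactness of $-\otimes_\FI \Xi(\ell)$, the co-Yoneda lemma, and the identification $W_\ell = M\otimes_\FI \Xi(\ell)$ supplied by \autoref{thm:main}.
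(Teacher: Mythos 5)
Your proposal is correct and matches the paper's argument: the paper likewise applies the right-exact functor $-\otimes_\FI \Xi(\ell)$ to the presentation encoded by $Z$, identifies $\Z\FI(a,-)\otimes_\FI \Xi(\ell)\cong \Xi(\ell)_a$ via Yoneda, and checks entrywise that precomposition by $f$ becomes $\Xi(\ell)_f$, so that $Z\otimes_\FI\Xi(\ell)=\Xi(\ell)_Z$ and the cokernel computes $W_\ell = M\otimes_\FI\Xi(\ell)$. No substantive difference from the paper's proof.
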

 
A version of  \autoref{thm:compute} over $\mathbb{Q}$ appears in \cite{WG18}, which relies on the structure theory provided by \cite{SS16}.  An analogous result for the category $\mathsf{FA}$ of finite sets and all functions---and for any other category of dimension zero---is available in the second author's dissertation \cite{WG16, WG15}. 

In the following examples, we continue to write injections using one-line notation.
\begin{ex} \label{ex:first}
Let $M$ be the $\FI$-module spanned by symbols $z_{ij}$ for all $i \neq j$, subject to the relation $z_{ij} + z_{jk}+z_{ki}=0$.  A presentation matrix for $M$ is given by
\[
Z = \begin{blockarray}{cc}
 & 3  \\
\begin{block}{c[c]}
 2  & \fn{12} + \fn{23} + \fn{31}  \\
\end{block}
\end{blockarray}.\vspace{-1.5ex}
\]
If $\ell >2$, then $\Xi(\ell)_Z=0$.  For $\ell \leq 2$, we have
\begin{align*}
\Xi(0)_Z &= \begin{blockarray}{ccccccc}
& x_1x_2x_3 & x_1x_3x_2 &x_2x_1x_3 &x_2x_3x_1 &x_3x_1x_2 &x_3x_2x_1  \\
\begin{block}{c[cccccc]}
x_1x_2  &  2 & 1 & 1 & 2 & 2 & 1 \\ 
x_2x_1  &  1 & 2 & 2 & 1 & 1 & 2 \\
\end{block}
\end{blockarray}\\
\Xi(1)_Z &= \begin{blockarray}{ccccccc}
&1x_1x_2 &1x_2x_1 &x_11x_2 &x_1x_21 &x_21x_1 &x_2x_11  \\
\begin{block}{c[cccccc]}
1x_1  &  1 & 1 & 1 & 1 & 1 & 1 \\ 
x_11 &  1 & 1 & 1 & 1 & 1 & 1 \\
\end{block}
\end{blockarray} \\
\Xi(2)_Z &= \begin{blockarray}{ccccccc}
&12x_1 &1x_12 &21x_1 &2x_11 &x_112 &x_121  \\
\begin{block}{c[cccccc]}
12  &  1 & 0 & 0 & 1 & 1 & 0 \\
21  &  0 & 1 & 1 & 0 & 0 & 1 \\
\end{block}
\end{blockarray}. 
\end{align*}

Since $\mathrm{coker}\, \Xi(0)_Z \cong \mathbb{Z}/3$, $\mathrm{coker}\, \Xi(1)_Z \cong \mathbb{Z}$, and $\mathrm{coker}\, \Xi(2)_Z \cong 0$, we have
\[
M_n \cong (\mathbb{Z}/3) \oplus \mathbb{Z}^{n-1}
\]
for all $n \geq 5$, by \autoref{thm:compute} and \autoref{thm:main_concrete}.  If the relation defining $M$ were instead $2z_{ij} + 3z_{ik} + 4z_{ji} + 5z_{jk} + 6z_{ki} + 7z_{kj}=0$, an equally-straightforward calculation finds
\[
M_n \cong (\mathbb{Z}/27) \oplus (\mathbb{Z}/45)^{n-1} \oplus (\mathbb{Z}/3)^{n(n-3)/2}
\]
for all $n \geq 5$.
\end{ex}

\subsection{Acknowledgements}
 The authors wish to thank Jordan Ellenberg and Jeremy Miller for many useful conversations. Further thanks to Zach Himes and Rohit Nagpal for helpful comments. Thanks also to MSRI for its hospitality. This second author was supported by the Algebra RTG at the University of Wisconsin, DMS-1502553.


\section{Notation and category theory background}

We quickly review the (mostly standard) notation and the background needed in category theory that we use in this paper.

\textbf{Set-theory notation.} Let $\N$ denote the set of nonnegative integers $\{0,1,2, \dots\}$ and $[n]$ denote the set $\{1,\dots, n\}$. If $f\colon X\to Y$ is a function between the sets $X$ and $Y$, and $S\subseteq X$, then $f|_S\colon S \to Y$ denotes the restriction. If $T \subseteq Y$ such that $\im f\subseteq T$, we write $f|^T \colon X \to T$ by limiting the codomain. If $f(S) \subseteq T$, we write $f|_S^T\colon S \to T$ limiting both the domain and the codomain.

\textbf{Algebraic notation.} For a set $X$, let $\Z X$ denote the free abelian group whose basis is $X$. An unadorned tensor product $M \otimes N$ means $M \otimes_\Z N$. 
The symmetric group of bijections $[n] \to [n]$ is denoted by $\SG_n$.

\textbf{Categorical notation.} Let $\C$ be a locally small category. We write $c \in \C$ to mean that $c$ is an object in $\C$ and we write $\C(c,c')$ for the set of morphisms between the objects $c,c'\in \C$. If $\D$ is another category, and $F \colon \C \to \D$ a functor (by which we always mean a covariant functor), then we write $F_c$ or $Fc$ for the action of $F$ on an object $c\in\C$, and $F_f$ or $Ff$ for the action of $F$ on a morphism $f\in \C(c,c')$.

Suppose $\C$ is essentially small.  If $F,F' \colon \C \to \Ab$ are functors, then $\Hom_\C(F,F')$ denotes the set of natural transformations from $F$ to $F'$. We write $\xmod\C$ for the locally small category of functors $\C \to \Ab$. If $\C$ is a linear category, we require the functors $\C \to \Ab$ to be linear.
The resulting covariant action of $\C$ is written on the right; correspondingly, contravariant actions are written on the left.  
In detail, if $M \in \xmod\C$, $c\in C$, $m \in M_c$, and $f\in \C(c,c')$, we simply write $mf$ or $m \cdot f$ for $M_f(m)\in M_{c'}$.  Similarly, if $M \in \xmod{\C^\op}$, $c\in C$, $m \in M_c$, and $f\in \C(c',c)$, we write $fm$ or $f \cdot m$ for $M_f(m)\in M_{c'}$.  In this context, we write $ff'$ or $f\cdot f'$ for the composition $f'\circ f$ if $f\in\C(c,c')$ and $f'\in\C(c',c'')$ to conform with our right-action convention. 
If $M,N\in \xmod\C$, the set of natural transformations $\Hom_\C(M,N)$ carries the structure of an abelian group.

\subsection{The tensor-hom adjunction over categories}\label{sec:tensor-hom}

We recall the tensor-hom adjunction for modules over a category. For the remainder of this section, fix an essentially small category $\C$, and let $i \colon \C' \subseteq \C$ be the inclusion of a small skeleton so that $i$ is an equivalence.   

We recall the functor tensor product over $\C$.

\begin{defn}
Let $M$ be a $\C$-module and $N$ a $\C^\op$-module. The tensor product $M\otimes_\C N \in \Ab$ is the cokernel of the homomorphism
\[   \bigoplus_{c,c' \in \C'} M_c \otimes \Z\C'(c,c') \otimes N_{c'} \longrightarrow \bigoplus_{c \in \C'} M_c \otimes N_c \]
given by $m\otimes f \otimes n \mapsto  mf \otimes n - m\otimes fn$.
\end{defn}
When $N\colon \C^\op \times \D \to \Ab$ carries two actions---one covariant, and one contra---we call it a bimodule, and write the actions on opposite sides.  This works because morphisms of $\C^\op$ commute with morphisms of $\D$ in the product category $\C^\op \times \D$.
\begin{thm}[Tensor-hom adjunction]\label{thm:tensorhom}
If $N\colon \C^\op \times \D \to \Ab$ is a bimodule, then $- \otimes_\C N \colon \xmod\C \to \xmod\D$ is the left adjoint of $\Hom_\D(N,-) \colon \xmod\D \to \xmod\C$. In particular,
\[ \Hom_\D(M\otimes_\C N, A) \cong \Hom_{\C}(M,\Hom_\D(N,A)).\]
Denoting by $\varphi_m \colon \Z\C(c,-) \to M$ the map corresponding by Yoneda's lemma to $m \in M_c$ for $c \in \C$, we have the following formula for the unit $\eta$:
\begin{align*}
\eta_M \colon M &\to \Hom_{\D}(N,M\otimes_\C N) \\
m & \mapsto \varphi_m.
\end{align*}
\end{thm}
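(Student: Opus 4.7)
The proof is a category-level upgrade of the classical tensor-hom adjunction for abelian groups, so the plan is to reduce to that classical statement and then check naturality and the action conventions.

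First I would verify the two ingredients make sense. For $M \in \xmod \C$ and $A \in \xmod \D$, the formula $c \mapsto \Hom_\D(N(c,-), A)$ defines a $\C$-module: given $f \in \C(c,c')$, precomposition with the induced map $N(f,-)\colon N(c',-) \to N(c,-)$ gives the covariant $\C$-action, and this uses only the $\C^\op$-structure on $N$. Dually, $d \mapsto M \otimes_\C N(-,d)$ is a $\D$-module, with $\D$-action inherited from the $\D$-action on $N$, which commutes with the $\C^\op$-action defining the coend.

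Next I would establish the bijection by universal properties, in three stages. A $\D$-module map $\phi \colon M \otimes_\C N \to A$ is, by definition of the coend/functor tensor product, the same data as a family of $\Z$-bilinear maps $\beta_{c,d} \colon M_c \otimes N(c,d) \to A_d$ that is natural in $d$ (i.e.\ commutes with the $\D$-action on $N$ and $A$) and satisfies the balancing relation $\beta_{c',d}(mf\otimes n) = \beta_{c,d}(m \otimes fn)$ for $f \in \C(c,c')$. By the ordinary tensor-hom adjunction in $\Ab$, each $\beta_{c,d}$ is the same data as a map $\tilde\beta_{c,d}\colon M_c \to \Hom_\Z(N(c,d), A_d)$; naturality of $\beta$ in $d$ translates into the statement that $\tilde\beta_{c,d}$ lands in the subgroup $\Hom_\D(N(c,-), A) = \Hom_\D(N,A)_c$; and the balancing relation translates into the statement that $c \mapsto \tilde\beta_{c,-}$ is natural in $c$, i.e.\ defines a $\C$-module map $M \to \Hom_\D(N,A)$. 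Reading the chain of equivalences in reverse gives the inverse bijection, and everything is visibly functorial in $M$ and $A$, so I obtain the desired natural isomorphism.

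Finally, to read off the unit I would trace the identity. Under the bijection, $\mathrm{id}_{M \otimes_\C N}$ pulls back to the map $\eta_M \colon M \to \Hom_\D(N, M\otimes_\C N)$ sending $m \in M_c$ to the $\D$-module map $N(c,-) \to M\otimes_\C N$, $n \mapsto m \otimes n$. To reconcile this with the Yoneda description in the statement, I would observe that under the canonical isomorphism $\Z\C(c,-) \otimes_\C N \cong N(c,-)$ (which is the enriched Yoneda lemma), the map $\varphi_m \otimes_\C N$ is exactly $n \mapsto m \otimes n$; this is what the notation $m \mapsto \varphi_m$ in the theorem statement abbreviates.

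The actual work is minimal—everything reduces to unwinding the coend definition and invoking Yoneda—so the main obstacle is purely bookkeeping: keeping left versus right actions straight under the paper's convention that covariant actions are written on the right and contravariant on the left, and making sure every ``naturality'' claim is checked on both the $\C$ and $\D$ sides. Once the variance conventions are pinned down, the three-stage translation of data above is forced and requires no further calculation.
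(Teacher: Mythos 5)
Your proof is correct, and in fact the paper states this tensor-hom adjunction as recalled background without giving a proof, so there is nothing for it to diverge from: your reduction to the classical adjunction in $\Ab$ (unwinding the balanced/coend description of $M\otimes_\C N$, then checking that naturality in $\D$ and the balancing relation translate into landing in $\Hom_\D(N,A)$ and naturality in $\C$) is the standard argument. Your reading of the unit---that $m\mapsto\varphi_m$ abbreviates $\varphi_m\otimes_\C N$ composed with the canonical identification $\Z\C(c,-)\otimes_\C N\cong N(c,-)$, i.e.\ $n\mapsto m\otimes n$---is exactly how the paper uses the unit in its proof of \autoref{cor:F*rightadjoint}, so the bookkeeping is consistent with the paper's conventions.
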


\begin{cor}\label{cor:F*leftadjoint}
Let $G\colon \C \to \D$, then \[- \otimes_\C \Z\D(G(-),-)\colon \xmod\C \to \xmod{\D}\] is the left adjoint of the precomposition functor $G^*\colon \xmod\D \to \xmod\C$.
\end{cor}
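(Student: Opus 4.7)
The plan is to apply the tensor-hom adjunction (\autoref{thm:tensorhom}) to the specific bimodule $N = \Z\D(G(-),-)$ and recognize $\Hom_\D(N,-)$ as the precomposition functor $G^*$.

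First I would check that $N \colon \C^\op \times \D \to \Ab$ is genuinely a bimodule in the required sense. For each pair $(c,d) \in \C^\op \times \D$, set $N(c,d) = \Z\D(G(c),d)$. A morphism $f \in \C(c,c')$ acts on the left by precomposition with $Gf$, and a morphism $g \in \D(d,d')$ acts on the right by postcomposition with $g$; these two actions commute because composition in $\D$ is associative. This confirms $N$ is a $\C^\op \times \D$-module and that \autoref{thm:tensorhom} applies.

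Second, I would identify the right-hand functor. For any $A \in \xmod\D$, I compute $\Hom_\D(N, A)$ as a functor in $\C$. By definition, $\Hom_\D(N,A)_c = \Hom_\D(\Z\D(G(c),-), A)$, and by Yoneda's lemma this is naturally isomorphic to $A_{G(c)} = (G^*A)_c$. To finish this step I need to check that the $\C$-action matches: a morphism $f \in \C(c,c')$ acts on $\Hom_\D(N,A)$ via the $\C^\op$-action on $N$, which is precomposition by $Gf$, and under Yoneda this corresponds precisely to applying $A_{Gf}$ to elements of $A_{G(c)}$. That is exactly how $f$ acts on $G^*A = A \circ G$. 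Hence $\Hom_\D(N,-) \cong G^*$ as functors $\xmod\D \to \xmod\C$.

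Finally, substituting into the adjunction of \autoref{thm:tensorhom} gives
\[
\Hom_\D\bigl(M \otimes_\C \Z\D(G(-),-),\ A\bigr) \;\cong\; \Hom_\C(M,\, G^* A),
\]
naturally in $M \in \xmod\C$ and $A \in \xmod\D$, which is the claimed adjunction. The only step with any real content is the naturality of the Yoneda identification in the $\C$-variable; once that bookkeeping is set up, the corollary is immediate from \autoref{thm:tensorhom}.
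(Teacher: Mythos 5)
Your proposal is correct and follows the same route as the paper: invoke \autoref{thm:tensorhom} with $N = \Z\D(G(-),-)$ and use Yoneda's lemma to identify $\Hom_\D(\Z\D(Gc,-),A) \cong A_{Gc} = (G^*A)_c$ naturally in $c$ and $A$. You simply spell out the bimodule structure and the naturality bookkeeping that the paper leaves implicit.
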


\begin{proof}
By Yoneda's lemma, $\Hom_\D(\Z\D(Gc,-),M) \cong M(Gc) \cong (G^*M)_c$, and these isomorphisms are natural in $c \in \C$ and $M \in \xmod{\D}$.
\end{proof}

\begin{cor}\label{cor:F*rightadjoint}
Let $F\colon \D \to \C$, then $F^*\colon \xmod\C \to \xmod\D$ is the left adjoint of \[\Hom_\D(\Z\C(-,F(-)),-)\colon \xmod\D \to \xmod\C.\] The unit $\eta_M \colon M \to \Hom_\D(\Z\C(-,F(-)),F^*M)$ is given by sending $m\in M_c$ to the map that sends $\phi\in \Z\C(c,F(d))$ to $ m\phi \in (F^*M)_d$.
\end{cor}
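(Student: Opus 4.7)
The plan is to apply the tensor-hom adjunction (\autoref{thm:tensorhom}) to the bimodule
\[
N \colon \C^\op \times \D \to \Ab, \qquad N(c, d) = \Z\C(c, F(d)),
\]
where the contravariant $\C$-action is by precomposition and the covariant $\D$-action is by postcomposition with $F$ applied to morphisms of $\D$. First I would check that $N$ is indeed a bimodule: the two actions commute by associativity of composition in $\C$, so the action of $\C^\op \times \D$ is well-defined.

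Next I would identify $-\otimes_\C N$ with $F^*$ naturally. By the co-Yoneda computation, the map $m \otimes \phi \mapsto m\phi$ induces a natural isomorphism $M \otimes_\C \Z\C(-, c') \cong M_{c'}$ for every $c' \in \C$. Setting $c' = F(d)$ and letting $d$ vary yields a natural isomorphism of $\D$-modules $M \otimes_\C N \cong F^*M$; naturality in $d$ is automatic since on both sides the $\D$-action sends $g \colon d \to d'$ to the $\C$-action by $F(g)$. Feeding this identification into \autoref{thm:tensorhom} produces the desired adjunction
\[
\Hom_\D(F^*M, A) \cong \Hom_\C(M, \Hom_\D(\Z\C(-, F(-)), A)).
\]

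For the unit, \autoref{thm:tensorhom} furnishes the formula $\eta_M(m) = \varphi_m$, where $\varphi_m \colon \Z\C(c, -) \to M$ is the Yoneda natural transformation sending $\psi \in \C(c, c')$ to $m\psi \in M_{c'}$. Tracing $\varphi_m$ through the identification $M \otimes_\C N \cong F^*M$, the corresponding $\D$-natural transformation $\Z\C(c, F(-)) \to F^*M$ sends $\phi \in \Z\C(c, F(d))$ to $m\phi \in M_{F(d)} = (F^*M)_d$, exactly the stated formula.

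The argument is essentially bookkeeping once the bimodule $N$ is written down; the only substantive input is the co-Yoneda identification, and there is no serious obstacle. The main thing to be careful with is the left/right action convention of the paper: precomposition versus postcomposition must be tracked on both the $\C$- and $\D$-sides so that the unit comes out as $\phi \mapsto m\phi$ rather than with a transposed variance.
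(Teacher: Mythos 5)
Your proposal is correct and follows essentially the same route as the paper: apply \autoref{thm:tensorhom} to the bimodule $N = \Z\C(-,F(-))$, identify $-\otimes_\C N$ with $F^*$ via the (co-)Yoneda isomorphism $M \otimes_\C \Z\C(-,F(d)) \cong M_{F(d)}$, and trace the unit formula $m \mapsto \varphi_m$ through that identification to obtain $\phi \mapsto m\phi$. The paper's proof is just a terser version of the same bookkeeping.
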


\begin{proof}
The first claim follows from Yoneda's lemma.  For the second, using the description of the unit from \autoref{thm:tensorhom}, we have that that $\eta_M$ sends $m\in M_c$ to $\phi_m \otimes_\C N\colon \Z\C(c,-)\otimes_\C N \to M\otimes_\C N$ with $\phi_m(\id_c) = m$. For $N = \Z\C(-,F(-)) \colon \C^\op \to \xmod\D$, $\phi\in \Z\C(c,F(d))$ corresponds to $\id_c \otimes \phi \in  \Z\C(c,-)\otimes_\C N$ and thus is sent to $m \otimes_\C \phi\in M \otimes \Z\C(-,F(-))$ which corresponds to $m\phi \in M_{F(d)} = (F^*M)_d$.
\end{proof}


\section{Tail invariants}
 In this section, we construct certain flat $\FI^{\op}$-modules $\Xi(\ell)$, yielding under tensor product exact functors $\xmod\FI \to \Ab$.  We will see that these functors send modules supported in finite degree to zero, and so they depend only on tails.

\subsection{The shift functor $\Sigma^T$ {of $\FI^\op$-modules} and its right adjoint $\Omega^T$}

\begin{defn}
Let $\Sigma^T\colon \FIopmod \to \FIopmod$ be the \emph{shift functor}, which is given by precomposition with the opposite of the functor $\sigma^T = T\sqcup-\colon \FI \to \FI$. For brevity, let us write $\Sigma$ for $\Sigma^{\{1\}}$.
\end{defn}

In the following proposition, we describe a functor $\Omega^T$ and show that it is the right adjoint of $\Sigma^T$. Write $\Omega^\ell$ for $\Omega^{[\ell]}$ and $\Omega$ for $\Omega^1$.

\begin{prop} \label{prop:Omegap}
The functor $\Sigma^T$ has a right adjoint $\Omega^T\colon \FIopmod \to \FIopmod$ so that
\[ (\Omega^TM)_S \cong \bigoplus_{f \in \FI(T,S)} M_{S\setminus \im f},\]
and for $g \in \FI(S',S)$,
\begin{align*}
(\Omega^T M)_g \colon\bigoplus_{f \in \FI(T,S)} M_{S\setminus \im f} &\longrightarrow \bigoplus_{f' \in \FI(T,S')} M_{S'\setminus \im f'} \\
(f,m) & \longmapsto \begin{cases} (f',g|^{S\setminus \im f}_{S'\setminus \im f'} \cdot m) & g\circ f' = f\\ 0 & \text{otherwise}.\end{cases}
\end{align*}

Given a map of $\FI^\op$-modules $\varphi\colon M\to M'$, {the induced map} $(\Omega^T)_\varphi$ is given by the sum of $\varphi_{S\setminus \im f} \colon M_{S\setminus \im f} \to M'_{S\setminus \im f}$ over all $f\in \FI(T,S)$.

The component of the unit $\eta$ corresponding to $M$ 
\[\eta_M\colon M_S \to (\Omega^T \Sigma^T M)_S \cong \bigoplus_{f\in\FI(T,S)} M_{T \sqcup S\setminus \im f}\]
is given by the sum of the maps $M_h \colon M_S \to M_{T \sqcup S\setminus \im f}$, where  $h|_T = f$ and $h$ restricted to $S \setminus \im f$ is the inclusion map $S\setminus \im f \subseteq S$.
\end{prop}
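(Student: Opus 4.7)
The plan is to invoke \autoref{cor:F*rightadjoint} with $F = (\sigma^T)^\op \colon \FI^\op \to \FI^\op$, for which $F^* = \Sigma^T$. This immediately produces a right adjoint of the form
\[
(\Omega^T M)_S \;\cong\; \Hom_{\FI^\op}\bigl(\Z\FI^\op(S, T \sqcup -),\, M\bigr),
\]
together with an explicit unit: $\eta_M(m)$ sends $\phi \in \Z\FI^\op(S, T \sqcup d)$ to $\phi \cdot m \in M_{T \sqcup d}$. The remaining work is to convert these abstract descriptions into the concrete direct-sum formulas asserted in the proposition.

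The key combinatorial input is the natural bijection between injections $h \colon T \sqcup d \inject S$ and pairs $(f, k)$, where $f = h|_T \in \FI(T, S)$ records the image of $T$ and $k = h|_d^{S \setminus \im f} \in \FI(d, S \setminus \im f)$ is the corresponding corestriction. Noting that precomposing $h$ with $T \sqcup \iota$ for $\iota \in \FI(d', d)$ leaves $f$ unchanged and replaces $k$ by $k \circ \iota$, this bijection promotes to an $\FI^\op$-linear isomorphism of functors
\[
\Z\FI^\op(S, T \sqcup -) \;\cong\; \bigoplus_{f \in \FI(T, S)} \Z\FI^\op(S \setminus \im f, -).
\]
Applying $\Hom_{\FI^\op}(-, M)$ summand-by-summand and invoking Yoneda produces the desired decomposition $(\Omega^T M)_S \cong \bigoplus_{f \in \FI(T,S)} M_{S \setminus \im f}$, and functoriality in $\varphi \colon M \to M'$ is immediate since $\Hom_{\FI^\op}(-, \varphi)$ acts summand-wise.

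With these identifications in place, both explicit formulas fall out by tracing through the definitions. For $g \in \FI(S', S)$, the induced map on $\Z\FI^\op(-, T \sqcup -)$ is post-composition with $g$, which sends the representative $f' \sqcup \id_{S' \setminus \im f'}$ of the $f'$-summand at $d = S' \setminus \im f'$ to $(g \circ f') \sqcup g|_{S' \setminus \im f'}^{S \setminus \im(g f')}$; under the above decomposition this lands in the $(g \circ f')$-summand, so a single basis element $(f, m)$ has nonzero $f'$-component only when $f = g \circ f'$, in which case that component is $g|^{S \setminus \im f}_{S' \setminus \im f'} \cdot m$. For the unit, evaluating $\eta_M(m)$ in the $f$-summand at $d = S \setminus \im f$ means applying $\phi \mapsto \phi \cdot m$ to the representative $h \colon T \sqcup (S \setminus \im f) \to S$ that is $f$ on $T$ and the inclusion on $S \setminus \im f$, yielding $M_h(m)$ exactly as stated.

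The only real obstacle is bookkeeping: because $F$ acts on $\FI^\op$ rather than $\FI$, variances must be tracked carefully, and one must in particular verify that the decomposition in the second step is genuinely $\FI^\op$-linear in its second variable (which reduces to the compatibility observation above). Once this is checked, everything else is a mechanical unwinding of \autoref{thm:tensorhom} and the Yoneda lemma.
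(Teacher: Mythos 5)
Your proposal is correct and takes essentially the same route as the paper: both apply \autoref{cor:F*rightadjoint} to $\sigma^T$, decompose the representable $\Z\FI(T\sqcup -,S)$ along the bijection $h \leftrightarrow (h|_T,\, h|^{S\setminus \im h|_T})$ into $\bigoplus_{f\in\FI(T,S)}\Z\FI(-,S\setminus \im f)$, and then extract the formulas for the $g$-action, the functoriality in $\varphi$, and the unit by evaluating at the canonical representatives via Yoneda. I see no gaps.
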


\begin{proof}
An injection $h \colon T\sqcup U \to S$ is determined uniquely by an injection $f\colon T \to S$ and an injection $h' \colon U \to S \setminus \im f$. Therefore, using \autoref{cor:F*rightadjoint}, we can compute:
\begin{align*}
(\Omega^T M)_S 
= &\Hom_{\FI^\op}(\Z\FI(T\sqcup -,S),M)\\
= &\Hom_{\FI^\op}(\bigoplus_{f\in\FI(T,S)} \Z\FI( -,S\setminus \im f),M)\\
\cong &\bigoplus_{f\in\FI(T,S)}  \Hom_{\FI^\op}(\Z\FI( -,S\setminus \im f),M)\\
\cong  & \bigoplus_{f \in \FI(T,S)} M_{S \setminus \im f}.
\end{align*}

To understand $(\Omega^TM)_g$ for $g\in \FI(S',S)$, let us go through these isomorphisms. 
An element
\[ (f,m) \in \bigoplus_{f \in \FI(T,S)} M_{S \setminus \im f}\]
translates to the map 
$\varphi \in \Hom_{\FI^\op}(\Z\FI(T\sqcup -,S),M)$
 with
\begin{align*}
 \varphi_U \colon \Z\FI(T\sqcup U, S) &\longrightarrow M_U\\ h  \hspace{2em}&\longmapsto \begin{cases}  h|^{S\setminus \im f}_U \cdot m &h|_T =f\\ 0 &\text{otherwise.}\end{cases}\end{align*}
Therefore $g\varphi_U \colon \Z\FI(T\sqcup U, S')  \to M_U$ is given by 
\begin{align*}h'  \longmapsto  \begin{cases}  (g\circ h')|^{S\setminus \im f}_U \cdot m = h'|^{S' \setminus g^{-1}(\im f)}_U \cdot g|^{S\setminus \im f}_{h'(U)} \cdot m &(g \circ h')|_T =f\\ 0 &\text{otherwise,}\end{cases}\end{align*}
and this map corresponds through the isomorphism to the sum
\[ \sum_{\substack{f'\in \FI(T,S')\\g \circ f' = f}}(f' , g|^{S\setminus \im f}_{S'\setminus \im f'} \cdot m) \in \bigoplus_{f' \in \FI(T,S')} M_{S' \setminus \im f'}.\]

Given a map of $\FI^\op$-modules $\varphi\colon M\to M'$, then $(\Omega^T)_\varphi$ is given by postcomposition
\[ \Hom_{\FI^\op}(\Z\FI(T\sqcup -,S),M) \longrightarrow \Hom_{\FI^\op}(\Z\FI(T\sqcup -,S),M').\]
Through the given isomorphisms, this translates to the sum
\[ \sum_{f\in\FI(T,S)} \varphi_{S\setminus \im f} \colon \bigoplus_{f \in \FI(T,S)} M_{S \setminus \im f} \longrightarrow \bigoplus_{f \in \FI(T,S)} M'_{S \setminus \im f}.\]

By \autoref{cor:F*rightadjoint}, the unit $\eta_M \colon M\to \Omega^T\Sigma^T M= \Hom_{\FI^\op}(\Z\FI(T\sqcup -, -), \Sigma^T M)$ sends an element $m \in M_{S}$ to the map of $\FI^\op$-modules which sends a morphism  $g \in \FI(T\sqcup U, S)$ to $gm \in M_{T\sqcup U} = (\Sigma^TM)_U$. Through the above isomorphism, this translates to $m\in M_S$ being sent to the sum
\[ \sum_{f\in \FI(T,S)} hm \in \bigoplus_{f\in\FI(T,S)} M_{T \sqcup (S\setminus \im f)}\]
where $h\in \FI(T\sqcup (S\setminus \im f), S)$ is given by $h|_T = f$ and $h|_{S \setminus \im f}$ is the inclusion map $S\setminus \im f \subset S$.
\end{proof}

\begin{prop}\label{prop:Xi(ell)description}
$\Xi(\ell) \cong \Omega^\ell\Xi(0)$
\end{prop}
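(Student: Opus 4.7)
The plan is to write down an explicit natural isomorphism of $\FI^\op$-modules, by identifying both sides with the free $\Z$-module on the same combinatorial set indexed over $S$.

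First I would analyze the space $X_S = \Emb(S, [\ell] \sqcup \R)$ by decomposing an embedding $\phi$ according to the subset $L := \im \phi \cap [\ell] \subseteq [\ell]$. Since $[\ell]$ is discrete and $\R$ is contractible, any such $\phi$ is determined by (a) the subset $L$, (b) the injection $f := \phi^{-1}|_L \colon L \inject S$, and (c) the embedding $\phi|_{S\setminus \im f} \colon S\setminus \im f \to \R$, and this identification is a homeomorphism after fixing $L, f$. Therefore
\[
X_S \cong \bigsqcup_{L \subseteq [\ell]} \ \bigsqcup_{f\colon L \inject S} \Emb(S\setminus \im f, \R),
\]
with the subspace $A_S$ equal to the union of components with $L \subsetneq [\ell]$. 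Because $A_S$ is a union of connected components of $X_S$, the relative $H_0$ reduces to
\[
\Xi(\ell)_S = H_0(X_S, A_S) \cong \bigoplus_{f\in \FI([\ell], S)} H_0\bigl(\Emb(S\setminus \im f, \R)\bigr).
\]

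Second, I would identify the right side. When $\ell = 0$, the subspace $A_T$ is empty (the condition $\emptyset \not\subseteq \im\phi$ is never satisfied), so $\Xi(0)_T = H_0(\Emb(T,\R))$, which is the free abelian group on linear orderings of $T$. Substituting this into the display above gives exactly the formula for $(\Omega^\ell \Xi(0))_S$ from \autoref{prop:Omegap}, namely a direct sum over $f \in \FI([\ell],S)$ of $\Xi(0)_{S\setminus \im f}$. This produces the desired pointwise isomorphism.

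Third, I would verify naturality. Given $g \in \FI(S',S)$, the $\FI^\op$-action on $\Xi(\ell)$ precomposes an embedding: $\phi \mapsto \phi \circ g$. If $\im(\phi^{-1}|_{[\ell]}) \not\subseteq \im g$, then $[\ell] \not\subseteq \im(\phi\circ g)$, so $\phi \circ g \in A_{S'}$ and vanishes in $\Xi(\ell)_{S'}$; in the other case, $f = \phi^{-1}|_{[\ell]}$ factors uniquely as $g \circ f'$ with $f' \in \FI([\ell],S')$, and the ordering that $\phi \circ g$ induces on $S' \setminus \im f'$ is the pullback of the ordering that $\phi$ induces on $S\setminus \im f$ along the restriction $g|^{S\setminus \im f}_{S'\setminus \im f'}$. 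This is precisely the formula for $(\Omega^\ell\Xi(0))_g$ in \autoref{prop:Omegap}.

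The content of the proof is really just the topological observation that $X_S$ decomposes as above; the main obstacle is simply bookkeeping the bijection between components of $X_S \setminus A_S$ and pairs $(f,\sigma)$, and then matching the two action formulas term-by-term, all of which follow immediately from the definitions.
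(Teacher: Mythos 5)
Your argument is correct and is essentially the paper's proof: both start from the formula for $(\Omega^\ell\Xi(0))_S$ in \autoref{prop:Omegap} and identify it with $\Xi(\ell)_S$ via the same decomposition of an embedding $\phi$ into the injection $f\in\FI([\ell],S)$ recording which elements of $S$ hit $[\ell]$ together with an ordering of $S\setminus\im f$ (your component-by-component description of $H_0(X_S,A_S)$ is just the paper's explicit map $(f,[\phi])\mapsto[\psi]$ read backwards). The naturality check, splitting into the cases $\im f\subseteq\im g$ and $\im f\not\subseteq\im g$, is likewise the same as in the paper.
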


\begin{proof}
By \autoref{prop:Omegap}, we have
\[ (\Omega^\ell\Xi(0))_S = \bigoplus_{f \in \FI([\ell],S)} H_0(\Emb(S\setminus \im f,\R)).\]
There is an isomorphism
\[\bigoplus_{f \in \FI([\ell],S)} H_0(\Emb(S\setminus \im f,\R)) \longrightarrow H_0(X_S,A_S) = \Xi(\ell)_S \]
with $X_S = \Emb(S, [\ell] \sqcup \R)$ and $A_S = \{ \phi \in X_S \mid [\ell] \not \subseteq \im \phi\}$ given by
\[(f, [\phi]) \longmapsto \left[s \mapsto \begin{cases} \phi(s)  &s \in S\setminus \im f\\f^{-1}(s) &s\in \im f\end{cases}\right].\]

To prove that these isomorphisms are natural with respect to the $\FI^\op$-action, let $g\in \FI(S',S)$. By \autoref{prop:Omegap}, $g$ sends $(f,[\phi])$ to the unique $(f',[\phi'])$ with $f = g \circ f'$ and $\phi' =\phi\circ g|_{S'\setminus \im f'}^{S\setminus \im f}$ if such exists, and to zero otherwise. Under the isomorphism,  $(f',[\phi'])$ maps to 
\[\psi'(s') = \begin{cases} \phi'(s')  &s' \in S'\setminus \im f'\\(f')^{-1}(s') &s'\in \im f'.\end{cases}\]
Going around the other way,  the isomorphism sends  $(f,[\phi])$ to 
\[\psi(s) = \begin{cases} \phi(s)  &s \in S\setminus \im f\\f^{-1}(s) &s\in \im f.\end{cases}\]
And $g$ maps $[\psi]$ to $[\psi \circ g]$ if $\psi^{-1}([\ell]) \subseteq \im g$, and to zero otherwise. Note that $\psi^{-1}([\ell]) = \im f$, and if $\im f\subseteq \im g$, there exists a unique $f'$ such that $f = g\circ f'$. This implies that the necessary square commutes for all $s' \in S'$:
\[ \psi \circ g(s') = \begin{cases} \phi(g(s'))  &g(s') \in S\setminus \im f\\f^{-1}(g(s')) &g(s')\in \im f\end{cases} = \begin{cases} \phi'(s')  &s' \in S'\setminus \im f'\\(f')^{-1}(s') &s'\in \im f'\end{cases} = \psi'(s').\qedhere\]
\end{proof}

In \autoref{sec:CB}, we will  reconstruct the  tail of an $\FI$-module $M$ from  knowledge of the tail invariants $M \otimes_\FI \Xi(\ell)$ and the homomorphisms induced by maps $\Xi(\ell) \to \Xi(m)$, which we will discuss in the next sections.

\subsection{Left Kan extension from $\OI^\op$ to $\FI^\op$} 

Here, we consider the inclusions $\OI^\op \subset \FI^\op$ and $\OI^\op_{\leq d} \subset \FI^\op_{\leq d}$. We may restrict $\FI^\op$-modules and $\FI^\op_{\le d}$-modules along these inclusions. In this section, we describe the left adjoints to these restrictions, each of which has a description as a tensor product by \autoref{cor:F*leftadjoint}.

\begin{prop} \label{prop:lan}
The tensor product
\[ \Z\FI(n,-)\otimes_\OI M \cong \Z\SG_n\otimes  M_n \]
and the action of $g \in \FI(n', n)$ on the left-hand side translates under this isomorphism to 
\begin{align*}
 \Z\SG_n \otimes M_n&\longrightarrow  \Z\SG_{n'} \otimes M_{n'}\\
\sigma \otimes m & \longmapsto \begin{cases} \tau  \otimes hm & \mbox{if $h= \tau^{-1}g\sigma$ is monotone}\\ 0 & \text{otherwise,}\end{cases}
\end{align*}
for $\sigma \in \SG_n$ and $\tau \in \SG_{n'}$.
\end{prop}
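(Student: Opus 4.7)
The plan is to exploit the unique right-coset factorization that every $\phi\in\FI(n,a)$ admits: there are unique $\sigma\in\SG_n$ and $g_0\in\OI(n,a)$ with $\phi = \sigma\cdot g_0$, where $g_0$ is the monotone injection whose image equals $\phi([n])$. Under this factorization, $\Z\FI(n,-)$ decomposes as an external tensor product $\Z\SG_n\otimes \Z\OI(n,-)$ of $\OI$-modules in the second slot, with $\OI$ acting trivially on the $\Z\SG_n$ factor and by right-composition on $\Z\OI(n,-)$. I would verify this as an isomorphism of $\OI$-modules by checking naturality with respect to any $h\in\OI(a,b)$: indeed $(\sigma g_0)\cdot h = \sigma\cdot(g_0 h)$, and $g_0 h\in\OI(n,b)$ remains monotone, so the factorization is preserved.

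Next I would apply associativity of the tensor product together with the Yoneda identity $\Z\OI(n,-)\otimes_\OI M \cong M_n$ (which holds because any $g_0\in\OI(n,a)$ can be absorbed into $M$ as $g_0 m$ by the defining relations of the tensor product):
\[
\Z\FI(n,-)\otimes_\OI M \;\cong\; \Z\SG_n\otimes \bigl(\Z\OI(n,-)\otimes_\OI M\bigr) \;\cong\; \Z\SG_n\otimes M_n,
\]
where the composite isomorphism sends $\sigma\cdot g_0\otimes m \leftrightarrow \sigma\otimes g_0 m$.

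To derive the action formula, I would chase $\sigma\otimes m\in\Z\SG_n\otimes M_n$ through the isomorphism. Its image $\sigma\otimes m\in\Z\FI(n,-)\otimes_\OI M$ is acted on by $g\in\FI(n',n)$ via the contravariant action on the first slot of $\Z\FI(n,-)$, i.e.\ precomposition by $g$, producing $g\sigma\otimes m\in\Z\FI(n',-)\otimes_\OI M$. Factoring $g\sigma = \tau\cdot h$ uniquely with $\tau\in\SG_{n'}$ and $h\in\OI(n',n)$, the defining tensor relation rewrites this as $\tau\otimes hm$. The identity $h = \tau^{-1}g\sigma$ follows on solving, and the piecewise formula in the statement records exactly this: for a given $\sigma$, only the one $\tau\in\SG_{n'}$ making $\tau^{-1}g\sigma$ monotone contributes a nonzero term, while any other $\tau$ fails to realize the unique factorization and contributes zero.

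The main obstacle is simply keeping the right-action conventions straight --- in particular that the $\FI^\op$-action in the first slot of $\Z\FI(n,-)$ is by precomposition while the $\OI$-action in the second slot is by post-composition --- so that associativity, the Yoneda identity, and the translation into the piecewise formula can all be carried out without variance errors. No genuinely difficult step is anticipated, since the result is essentially a shadow of the ``$\FI = \OI \cdot \SG$'' right-coset decomposition combined with the fact that tensoring a free module over its own category recovers evaluation.
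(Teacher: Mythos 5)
Your proposal is correct and follows essentially the same route as the paper: both rest on the unique factorization of an injection in $\FI(n,k)$ as a permutation in $\SG_n$ followed by a monotone injection in $\OI(n,k)$, use it to identify $\Z\FI(n,-)\otimes_\OI M$ with $\Z\SG_n\otimes M_n$ (your external-tensor-plus-co-Yoneda packaging just makes explicit the ``unique representative'' step the paper asserts), and derive the action formula by precomposing with $g$ and refactoring $g\sigma=\tau h$.
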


\begin{proof}
Observe that every map in $f \in \FI(n,k)$ can be uniquely written as $\sigma h$ for some $\sigma \in \SG_n$ and $h\in \OI(n,k)$.
Therefore $f \otimes m = \sigma \otimes hm$ for all $m \in M_k$, and this is a unique representative in $\Z\FI(n,n) \otimes M_n = \Z\SG_n \otimes M_n$.
This proves that
\[ \Z\FI(n,-) \otimes_\OI M \cong \Z\SG_n \otimes M_n.\]

Precomposing with $g\in \FI(n',n)$ will send $\sigma\otimes m$ to $g\sigma \otimes m=\tau \otimes hm$ if $\tau h = g\sigma$ for the unique $\tau \in \SG_{n'}$ and $h \in \OI(n',n)$.
\end{proof}

\begin{rem} \label{rem:leqd}
The same result and proof hold for the inclusions $\OI_{\le d} \subset \FI_{\le d}$, and so $\Z\FI_{\le d}(n,-) \otimes_{\OI_{\le d}} M \cong \Z\SG_n \otimes M_n$ for $n\le d$.
\end{rem}

\begin{defn}
Let $\Lambda(\ell)$ denote the $\OI^{\op}$-module with \[\Lambda(\ell)_n = \begin{cases} \Z &n\ge \ell\\0 &n<\ell\end{cases}\] and where $f\in \OI(n',n)$ acts by the identity whenever $[\ell] \subseteq \im f$, and acts by zero otherwise.  Since $f$ is monotone, this condition is equivalent to $f(i) = i$ for all $i \in [\ell]$.
\end{defn}

Recall that $x_1 < x_2 < \cdots$ is the arbitrary increasing sequence of real numbers from \autoref{rem:x}.

\begin{defn}\label{defn:xinell}
For $n\ge \ell$, let $\xi_{n,\ell} \in \Xi(\ell)_n$ denote the element
\[ \xi_{n,\ell} = 1 \dots \ell x_1 \dots x_{n-\ell}.\]
\end{defn}

\begin{prop}\label{prop:XiKan}
There is an isomorphism of $\FI^\op$-modules 
\[\Z\FI(-,-)\otimes_{\OI}\Lambda(\ell)\xrightarrow{\cong} \Xi(\ell)\]
that sends $\id_n \otimes 1 \in \Z\FI(n,-) \otimes \Lambda(\ell)$ to $\xi_{n,\ell} \in \Xi(\ell)_n$ if $n\ge \ell$. This isomorphism restricts to an isomorphism of $\FI_{\le d}^\op$-modules 
\[ \Z\FI_{\le d}(-,-)\otimes_{\OI_{\le d}}i^*_d\Lambda(\ell) \xrightarrow{\cong} i_d^* \Xi(\ell),\]
where $i_d$ denotes the inclusion $\FI_{\le d} \subset \FI$.
\end{prop}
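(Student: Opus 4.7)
The plan is to invoke the adjunction of \autoref{cor:F*leftadjoint}: since $\Z\FI(-,-)\otimes_{\OI}(-)$ is left adjoint to the restriction $i^\ast \colon \xmod{\FI^\op} \to \xmod{\OI^\op}$, an $\FI^\op$-module map $\Z\FI(-,-)\otimes_{\OI}\Lambda(\ell) \to \Xi(\ell)$ is the same datum as an $\OI^\op$-module map $\Lambda(\ell) \to i^\ast \Xi(\ell)$. So I would first construct such an $\OI^\op$-map by sending the generator $1 \in \Lambda(\ell)_n$ (for $n \ge \ell$) to $\xi_{n,\ell}$, and then verify that the induced $\FI^\op$-map is a bijection in every degree.

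For the $\OI^\op$-map to be well-defined, I need to check that for every $f \in \OI(n',n)$ with $n', n \ge \ell$, the precomposition $\xi_{n,\ell} \circ f$ equals $\xi_{n',\ell}$ when $[\ell] \subseteq \im f$ and represents zero in $H_0(X_{[n']}, A_{[n']})$ otherwise. In the first case, monotonicity of $f$ together with $[\ell] \subseteq \im f$ forces $f$ to fix $[\ell]$ pointwise, so the values of $\xi_{n,\ell} \circ f$ on $[\ell]$ are $1,\dots,\ell$ and on $\{\ell+1,\dots,n'\}$ form an increasing subsequence of $x_1,x_2,\dots$; by the uniqueness statement in \autoref{rem:x}, this is the canonical representative of $\xi_{n',\ell}$. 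In the second case, the image of $\xi_{n,\ell} \circ f$ omits some element of $[\ell]$, so it lies in $A_{[n']}$ and is homologically trivial.

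Second, I would show the induced $\FI^\op$-map $\psi \colon \Z\FI(-,-)\otimes_{\OI}\Lambda(\ell) \to \Xi(\ell)$ is an isomorphism degree-wise. By \autoref{prop:lan}, the source at $n$ is $\Z\SG_n \otimes \Lambda(\ell)_n$, which is zero for $n < \ell$ and has $\Z$-basis $\{\sigma \otimes 1\}_{\sigma \in \SG_n}$ for $n \ge \ell$. The map sends $\sigma \otimes 1$ to $\sigma \cdot \xi_{n,\ell} = \xi_{n,\ell} \circ \sigma$ (since $\FI^\op$ acts by precomposition). These $n!$ elements are precisely the one-line-notation basis of $\Xi(\ell)_n$ from \autoref{rem:x}: every embedding $[n] \hookrightarrow [\ell] \sqcup \{x_1,\dots,x_{n-\ell}\}$ is $\xi_{n,\ell}$ composed with a unique permutation of $[n]$. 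So $\psi$ takes a $\Z$-basis bijectively to a $\Z$-basis. The $\FI_{\le d}^\op$ version follows by the identical argument, appealing to \autoref{rem:leqd} in place of \autoref{prop:lan}.

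The only step requiring genuine care is the well-definedness check in the first paragraph — specifically, matching $\xi_{n,\ell} \circ f$ to the canonical basis element $\xi_{n',\ell}$. Once monotonicity of $f$ is used to observe that $f$ fixes $[\ell]$ when $[\ell] \subseteq \im f$, the rest of the argument is bookkeeping with one-line notation and the basis description of $\Xi(\ell)$.
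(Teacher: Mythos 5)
Your argument is correct, and it takes a route that differs from the paper's in a worthwhile way. The paper defines the map degreewise via \autoref{prop:lan} by $\sigma\otimes 1\mapsto\sigma\xi_{n,\ell}$, observes degreewise bijectivity, and then verifies $\FI^\op$-naturality by hand, using the unique factorization of an injection as a permutation followed by a monotone map and comparing $\im h$ with $\im(g\sigma^{-1})$. You instead use \autoref{cor:F*leftadjoint} to trade the $\FI^\op$-equivariance check for the much smaller task of checking that $1\mapsto\xi_{n,\ell}$ is an $\OI^\op$-map, i.e.\ only monotone injections need to be handled (where $[\ell]\subseteq\im f$ forces $f|_{[\ell]}=\id$), after which bijectivity is a purely degreewise statement via \autoref{prop:lan}; the adjunction then supplies naturality for free, and the $\FI_{\le d}$ statement follows identically from \autoref{rem:leqd}. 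What the paper's computation buys is an explicit description of the $\FI^\op$-action on both sides, which it reuses implicitly elsewhere; what your argument buys is brevity and a cleaner separation of the combinatorial input from the formal one. One small imprecision: when $[\ell]\subseteq\im f$, the composite $\xi_{n,\ell}\circ f$ need not be the canonical representative of \autoref{rem:x} (its real values may be a proper increasing subsequence $x_{j_1}<\dots<x_{j_{n'-\ell}}$ of the fixed sequence), so you should conclude that it represents the class $\xi_{n',\ell}$ via an order-preserving (straight-line) isotopy, exactly as in the proof of \autoref{lem:XiellXid}, rather than by uniqueness of representatives; this is cosmetic and does not affect the proof.
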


\begin{proof}
According to \autoref{prop:lan}, the tensor product
\[ \Z\FI(n,-)\otimes_{\OI}\Lambda(\ell) = \begin{cases} \Z\SG_n\otimes \Z & n\ge \ell \\ 0 & n < \ell.\end{cases}\]
 It suffices to define the map in degrees $n\ge \ell$:
 \begin{align*} \Z\SG_n \otimes \Z&\longrightarrow \Xi(\ell)_n\\\sigma\otimes 1&\longmapsto \sigma\xi_{n,\ell}.\end{align*}
  This gives an isomorphism in every degree. It only remains to show that this is a map of $\FI^\op$-modules.

If $g \in \FI(n', n)$, then for every $\sigma \in \SG_n$, there is a unique pair $(\tau, h) \in \SG_{n'} \times \OI(n',n)$ such that $\tau g = h\sigma$.
 We want to check that the action of $g$ commutes with the isomorphism $\Z\SG_n \otimes \Lambda(\ell)_n \to \Xi(\ell)_n$ given above. Because $\sigma\otimes 1$ for $\sigma \in \SG_n$ gives a basis of $ \Z\SG_n \otimes \Lambda(\ell)_n$ for $n\ge \ell$, it is enough to check {commutation} on these elements. By \autoref{prop:lan}, $g$ sends $\sigma\otimes 1$ to $ \tau\otimes h(1)$, where 
\[h(1) = \begin{cases}1&[\ell] \subseteq \im h\\0&\text{otherwise.}\end{cases}\]
 On the other hand, $g$ sends $\sigma \xi_{n,\ell}$ to $\tau \xi_{n',\ell}$ if $[\ell] \subseteq \im (g\sigma^{-1})$ and to zero otherwise. This proves the first assertion because $\im h = \im (g\sigma^{-1})$.

The second assertion  follows from \autoref{rem:leqd}.
\end{proof}

\subsection{Natural transformations between tail invariants}\label{sec:defFJ}

\begin{prop}\label{prop:mapseq}
Let $\ell,m\in \N$. Then $\Hom_{\FI^{\op}}(\Xi(\ell),\Xi(m))$ is isomorphic to  the set of  $(\xi_n)_{n\ge \ell} \in \prod_{n\ge \ell} \Xi(m)_n $ such that 
\[\delta_i \xi_n = \begin{cases} \xi_{n-1}&i \not\in [\ell]\\0&\text{otherwise.}\end{cases}\]

Furthermore, for every $d\in\N$ the map 
\begin{align*}\Hom_{\FI^{\op}_{\le d}}(i^*_d\Xi(\ell),i^*_d\Xi(m))  &\longrightarrow  \Xi(m)_d\\\varphi &\longmapsto \varphi_d(\xi_{d,m})\end{align*}
  is injective, and the composition
\[ \Hom_{\FI^{\op}}(\Xi(\ell),\Xi(m)) \xrightarrow{i^*_d(-)} \Hom_{\FI^{\op}_{\le d}}(i^*_d\Xi(\ell),i^*_d\Xi(m)) \inject \Xi(m)_d\]
is given by $(\xi_n)_{n\in\N} \mapsto \xi_d$.
\end{prop}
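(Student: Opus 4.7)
The plan is to reduce the computation of $\Hom$-groups to natural transformations out of the much simpler $\OI^\op$-module $\Lambda(\ell)$, using the description of $\Xi(\ell)$ from \autoref{prop:XiKan}. For the first claim, I would combine that proposition with \autoref{cor:F*leftadjoint} applied to the inclusion $j\colon \OI \to \FI$. Since $\Xi(\ell)$ is the left Kan extension of $\Lambda(\ell)$ along $j^\op\colon \OI^\op \to \FI^\op$, the tensor-hom adjunction yields a natural bijection
\[
\Hom_{\FI^\op}(\Xi(\ell), \Xi(m)) \cong \Hom_{\OI^\op}(\Lambda(\ell), j^*\Xi(m)),
\]
under which $\varphi$ corresponds to the $\OI^\op$-map whose $n$-th component sends $1 \mapsto \varphi_n(\xi_{n,\ell})$. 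This is because the unit $\Lambda(\ell) \to j^*\Xi(\ell)$ sends $1 \in \Lambda(\ell)_n$ to $\xi_{n,\ell}$, by the explicit isomorphism of \autoref{prop:XiKan}.

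Next, I would unpack the right-hand side. A natural transformation $\psi$ of $\OI^\op$-modules is determined by the elements $\xi_n := \psi_n(1) \in \Xi(m)_n$ for $n \ge \ell$, since $\Lambda(\ell)_n$ is infinite cyclic generated by $1$ in that range and zero below. Because the face maps $\delta_i\colon [n-1] \to [n]$ generate $\OI$, naturality reduces to compatibility with each $\delta_i$; by the definition of $\Lambda(\ell)$, $\delta_i \cdot 1$ equals $1$ when $[\ell] \subseteq \im \delta_i$ (equivalently, $i \notin [\ell]$) and is $0$ otherwise. This yields exactly the stated conditions on the sequence $(\xi_n)$.

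For the second claim, I would apply the same adjunction to the inclusion $\OI_{\le d} \to \FI_{\le d}$, giving
\[
\Hom_{\FI^\op_{\le d}}(i_d^*\Xi(\ell), i_d^*\Xi(m)) \cong \Hom_{\OI^\op_{\le d}}(i_d^*\Lambda(\ell), j_d^* i_d^*\Xi(m)).
\]
The key observation is that such a $\psi$ is determined by its top-degree value $\xi_d := \psi_d(1)$: for every $n$ with $\ell \le n \le d$, the canonical inclusion $\iota\colon [n] \to [d]$ with $\iota(i) = i$ satisfies $[\ell] \subseteq \im \iota$, so $\iota \cdot 1 = 1$ in $\Lambda(\ell)_n$ and naturality forces $\psi_n(1) = \iota \cdot \xi_d$. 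This gives injectivity of the map $\varphi \mapsto \varphi_d(\xi_{d,\ell})$. The final composition assertion then follows immediately: restricting $\varphi$ along $i_d$ preserves the value $\varphi_d(\xi_{d,\ell}) = \xi_d$ from the description in the first part.

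The main subtlety will be the careful bookkeeping of variance conventions, specifically identifying the bimodule $\Z\FI(-,-)$ appearing in \autoref{prop:XiKan} with the one appearing in \autoref{cor:F*leftadjoint}; the symmetry $M \otimes_\C N \cong N \otimes_{\C^\op} M$ and the observation that $\Lambda(\ell)$ is covariant on $\OI^\op$ are what make the adjunction apply cleanly. Once this bookkeeping is settled, the rest is a direct Yoneda-style computation.
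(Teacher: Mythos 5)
Your proposal is correct and follows essentially the same route as the paper: reduce via \autoref{prop:XiKan} and the tensor--hom adjunction to $\Hom_{\OI^{\op}}(\Lambda(\ell),\Xi(m))$, read off the conditions on $(\xi_n)$ from the $\OI^\op$-action on $\Lambda(\ell)$, and for the truncated case note that the standard inclusion $[n]\subset[d]$ acts by the identity on $\Lambda(\ell)$, so the degree-$d$ value determines everything. (You also implicitly correct the statement's typo: the evaluation is at $\xi_{d,\ell}\in\Xi(\ell)_d$, exactly as in your argument.)
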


\begin{proof}
By the tensor-hom adjunction and \autoref{prop:XiKan}, 
\[ \Hom_{\FI^{\op}}(\Xi(\ell),\Xi(m)) \cong \Hom_{\OI^{\op}}(\Lambda(\ell), \Xi(m)).\]
An element $\vartheta \in \Hom_{\OI^\op}(\Lambda(\ell),\Xi(m))$ can then be described by its values $\xi_n= \vartheta_n(1)$ in every degree $n\ge \ell$.  These values have  the property $\delta_i \xi_n = \xi_{n-1}$ if $i \in [n] \setminus [\ell]$ and $\delta_i \xi_n =0$ otherwise, coming from the definition of $\Lambda(\ell)$.

For the second assertion, it is enough to show that $\xi_n$ for $m\le n<d$ is determined by $\xi_d$. In fact, then $\xi_n = f\xi_d$, where $f$ is the inclusion $[n] \subset [d]$.
\end{proof}

\begin{rem}
For every $M \in \FIopmod$, the inclusion $\emptyset \sqcup S \subseteq T \sqcup S$ induces a map $M_{T \sqcup S} \to  M_{S}$, and so $M$ carries a natural map $\Sigma^T M \to M$.
\end{rem}

\begin{defn}
Let $\ell \in \N$ and consider the map $\Sigma\Xi(\ell) \to \Xi(\ell)$. The adjunction of $\Sigma$ and $\Omega$ defines a homomorphism $\eta_{\ell}\colon \Xi(\ell) \to \Omega \Xi(\ell) \cong \Xi(\ell+1)$.
\end{defn}

\begin{lem} \label{lem:adjoint_kernel}
Let $M \in \FIopmod$, and write $\omega \colon M \to \Omega M$ for the map that is adjoint to the natural map $\sigma\colon \Sigma M \to M$.  Then, $\ker(\omega)_n$ consists of those $m \in M_n$ with $gm = 0$ for all $g\in \OI(n-1,n)$.
\end{lem}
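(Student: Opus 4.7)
The plan is to directly compute $\omega$ by unpacking the adjoint correspondence $\omega = \Omega(\sigma) \circ \eta_M$ and then identify precisely when $\omega_n(m) = 0$.

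First, I would use the explicit formula for the unit $\eta_M \colon M \to \Omega \Sigma M$ given in \autoref{prop:Omegap} with $T = \{1\}$. Parametrizing $\FI(\{1\}, n)$ by $i = f(1) \in [n]$, the decomposition becomes $(\Omega M)_n \cong \bigoplus_{i \in [n]} M_{[n] \setminus \{i\}}$, and my task reduces to computing each component of $\omega_n(m)$ along this splitting.

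Next, I would trace through the two stages. For a fixed $f \in \FI(\{1\}, n)$ with $f(1) = i$, the $f$-component of $\eta_M(m)$ is $h_f \cdot m \in M_{\{1\} \sqcup ([n] \setminus \{i\})}$, where $h_f$ restricts to $f$ on $\{1\}$ and to the inclusion on $[n] \setminus \{i\}$. Then $\Omega(\sigma)$ applies $\sigma$ componentwise, and $\sigma_S \colon M_{\{1\} \sqcup S} \to M_S$ is the action of the $\FI$-inclusion $S \hookrightarrow \{1\} \sqcup S$. Composing these two $\FI$-morphisms gives exactly the inclusion $\iota_i \colon [n] \setminus \{i\} \hookrightarrow [n]$, so functoriality identifies the $i$-th component of $\omega_n(m)$ with $\iota_i m$. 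Hence $m \in \ker(\omega)_n$ if and only if $\iota_i m = 0$ for every $i \in [n]$.

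Finally, I would match this with the $\OI$-formulation. Every $g \in \OI(n-1, n)$ factors uniquely as $g = \iota_i \circ \phi_i$, where $i$ is the unique element of $[n] \setminus \im g$ and $\phi_i \colon [n-1] \xrightarrow{\cong} [n] \setminus \{i\}$ is the order-preserving bijection. Since $\phi_i$ acts as an isomorphism on $M$, we get $gm = \phi_i(\iota_i m) = 0 \iff \iota_i m = 0$. The assignment $g \mapsto i$ is a bijection $\OI(n-1, n) \to [n]$, so ranging over all $g$ is the same as ranging over all $i$, and the lemma follows. The only real obstacle is bookkeeping: one must carefully track that $\FI^\op$-module actions are contravariant, so that composing the two $\FI$-inclusions from $\eta_M$ and from $\sigma$ yields an $\FI$-morphism whose action contributes to $\omega$. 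Conceptually, the lemma is just the statement that the transpose of $\sigma$ sends $m$ to the tuple of its restrictions to all $(n-1)$-element subsets.
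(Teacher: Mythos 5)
Your proof is correct and follows essentially the same route as the paper: unpack $\omega=\Omega(\sigma)\circ\eta_M$ using the explicit unit from \autoref{prop:Omegap}, identify the component indexed by $f(1)=i$ as the action of the inclusion $[n]\setminus\{i\}\subset[n]$, and then identify these inclusions with $\OI(n-1,n)$ via the monotone bijections. The bookkeeping with the contravariant action, including $gm=\phi_i\cdot(\iota_i\cdot m)$ and $\phi_i$ acting invertibly, is handled correctly.
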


\begin{proof}
The formula
\[ \omega_{n} = (\Omega( \sigma) \circ \eta_M)_{n} \]
describes $\omega$, where $\eta$ is the unit of the $\Sigma \dashv \Omega$ adjunction described in \autoref{prop:Omegap}. Thus
\[\omega_n(m) = \Omega( \sigma) ( \eta(m)) = \Omega( \sigma)\Bigg( \sum_{f \in \FI (1,n)} hm \Bigg) =  \sum_{f \in \FI (1,n)} h|_{[n]\setminus \im f}\cdot m \in \bigoplus_{f\in\FI(1,n)} M_{[n]\setminus \im f} ,\]
where $h\in \FI([1] \sqcup ([n]\setminus \im f),[n])$ with $h|_{[1]} = f$ and $h|_{[n]\setminus \im f}$ is the inclusion $[n]\setminus \im f \subset [n]$. Note that this map can also be expressed as
\[ \omega_n(m) =  \sum_{g \in \OI (n-1,n)} gm\in \bigoplus_{g \in \OI (n-1,n)} M_{n-1} \cong (\Omega M)_n\]
because the condition $h\circ f = \id_{[1]}$ is automatic as there is only one map in $\FI(1,1)$ and $g = h|_{[n]\setminus \im f}$ is a monotone map.

In order for $m\in M_n$ to be in the kernel of $\omega_n$, the sum $\omega_n(m)$ has to vanish in every summand. This proves the lemma.
\end{proof}

\begin{defn}
Define the $\FI^{\op}$-module $D=\ker(\eta_{0}\colon \Xi(0) \to \Xi(1))$.
\end{defn}

\begin{rem}
In \autoref{sec:combFJ}, we will give a combinatorial basis for $D_n$ and discuss how it relates to Lie brackets and derangements. In particular, $D_n$ is a free abelian group whose rank is the number of derangements in $\SG_n$.
\end{rem}

\begin{lem}\label{cor:ell_step}
$\Omega^\ell D \cong \ker( \eta_{\ell}\colon \Xi(\ell) \to \Xi(\ell+1))$
\end{lem}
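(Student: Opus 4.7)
The plan is to use that $\Omega^\ell$ is a right adjoint (hence preserves kernels) to reduce the claim to an identification of two natural maps $\Xi(\ell) \to \Xi(\ell+1)$, which can then be checked by direct computation.

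Since $\Sigma^\ell \dashv \Omega^\ell$, the functor $\Omega^\ell$ is left exact. Applying it to the left-exact sequence $0 \to D \to \Xi(0) \xrightarrow{\eta_0} \Omega\Xi(0)$ that defines $D$ yields
\[
0 \to \Omega^\ell D \to \Omega^\ell \Xi(0) \xrightarrow{\Omega^\ell \eta_0} \Omega^\ell \Omega \Xi(0),
\]
so $\Omega^\ell D \cong \ker(\Omega^\ell \eta_0)$. Combining with the identifications $\Omega^\ell \Xi(0) \cong \Xi(\ell)$ and $\Omega^\ell\Omega \Xi(0) \cong \Omega^{\ell+1}\Xi(0) \cong \Xi(\ell+1)$ provided by \autoref{prop:Xi(ell)description}, the result reduces to verifying that $\Omega^\ell \eta_0$ corresponds to $\eta_\ell$ under these isomorphisms.

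For that verification, I would compute both maps on an element $(f,m) \in (\Omega^\ell \Xi(0))_S = \bigoplus_{f \in \FI([\ell], S)} \Xi(0)_{S \setminus \im f}$ using the explicit formula $m \mapsto \sum_{g \in \OI(n-1,n)} gm$ for the adjoint unit (from the proof of \autoref{lem:adjoint_kernel}) together with the $\FI^{\op}$-action on $\Omega^\ell \Xi(0)$ spelled out in \autoref{prop:Omegap}. Both maps send $(f,m)$ to $\sum_{s \notin \im f} (f, \iota_s m)$, where $\iota_s$ is the codimension-one inclusion $(S \setminus \im f) \setminus \{s\} \hookrightarrow S \setminus \im f$. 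For $\eta_\ell$ this follows because an element $g \in \OI(|S|-1,|S|)$ whose omitted point $s$ lies in $\im f$ acts by zero on $(f,m)$ by the case analysis in \autoref{prop:Omegap}; for $\Omega^\ell \eta_0$ it is immediate by functoriality. Under the canonical identification of a pair $(f,s)$ with $f \in \FI([\ell],S)$ and $s \in S \setminus \im f$ with the extended injection $f \sqcup s \colon [\ell+1] \hookrightarrow S$, the two expressions agree as elements of $\Omega^{\ell+1}\Xi(0)_S \cong \Xi(\ell+1)_S$.

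The main obstacle is keeping the various natural identifications straight---between $\Omega \Omega^\ell$, $\Omega^\ell \Omega$, and $\Omega^{\ell+1}$, and between these and $\Xi(\ell+1)$---and confirming that the iso $\Xi(\ell) \cong \Omega^\ell \Xi(0)$ of \autoref{prop:Xi(ell)description} intertwines $\eta_\ell$ (the adjoint of the structure map $\Sigma\Xi(\ell) \to \Xi(\ell)$) with $\Omega^\ell \eta_0$ (the adjoint of $\Sigma\Xi(0) \to \Xi(0)$ pushed through $\Omega^\ell$). This is bookkeeping-heavy but presents no conceptual difficulty once one unwinds the formula in \autoref{prop:Omegap}.
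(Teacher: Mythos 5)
Your proposal is correct, but it is organized differently from the paper's argument, and the difference is worth noting. The paper proves the statement by a direct two-sided comparison of subobjects of $\Omega^\ell\Xi(0)$: it characterizes $\ker(\eta_\ell)$ via \autoref{lem:adjoint_kernel} as the elements killed by every monotone codimension-one map, unwinds the $\FI^\op$-action from \autoref{prop:Omegap} on $\sum_f \xi_f$, and then needs a genuine extra step in the converse containment --- the ``strategic choice'' of $g\in\OI(n-1,n)$ and $f'$ realizing an arbitrary monotone injection $[n-\ell-1]\to[n]\setminus\im f$ as a restriction, which isolates the vanishing condition on each summand $\xi_f$ and shows it lies in $D$. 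You instead invoke left exactness of the right adjoint $\Omega^\ell$ to get $\Omega^\ell D\cong\ker(\Omega^\ell\eta_0)$ for free, and reduce everything to identifying $\Omega^\ell\eta_0$ with $\eta_\ell$ under the isomorphisms $\Omega^\ell\Xi(0)\cong\Xi(\ell)$ and $\Omega^\ell\Omega\Xi(0)\cong\Omega^{\ell+1}\Xi(0)\cong\Xi(\ell+1)$; your summand-by-summand computation of both maps as $(f,m)\mapsto\sum_{s\notin\im f}(f,\iota_s m)$ is the same case analysis the paper performs (the key point being that a monotone $g$ omitting a point of $\im f$ kills the $f$-summand, by \autoref{prop:Omegap}), but it replaces the paper's surjectivity-of-restrictions argument with a purely formal step. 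What your route buys is that the harder containment comes from exactness rather than from a combinatorial construction, and that the statement is manifestly insensitive to which isomorphism $\Omega\Xi(\ell)\cong\Xi(\ell+1)$ is used, since kernels are unchanged by postcomposing with an isomorphism; what it costs is the bookkeeping you acknowledge, namely checking that the canonical swap $\Omega\Omega^\ell\cong\Omega^\ell\Omega\cong\Omega^{\ell+1}$ intertwines $\eta_\ell$ with $\Omega^\ell\eta_0$ (one terminological quibble: the formula $m\mapsto\sum_{g\in\OI(n-1,n)}gm$ is the adjunct $\omega$ of $\Sigma M\to M$, i.e.\ $\Omega(\sigma)\circ\eta_M$, not the unit itself, though it is exactly the map you need). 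Both proofs rest on the same three inputs --- \autoref{prop:Omegap}, \autoref{lem:adjoint_kernel}, and \autoref{prop:Xi(ell)description} --- so the computations overlap heavily, but your logical skeleton is cleaner and would be a perfectly acceptable substitute once the compatibility check is written out in full.
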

\begin{proof}
We will prove that the isomorphism from \autoref{prop:Xi(ell)description} restricts to the desired isomorphism. Let $\xi \in \Xi(\ell)_n$ correspond to 
\[\sum_{f\in \FI(\ell,n)} \xi_f \in \bigoplus_{f\in\FI(\ell,n)} \Xi(0)_{[n] \setminus \im f}.\] Then by \autoref{lem:adjoint_kernel}, $\xi \in  \ker( \eta_{\ell}\colon \Xi(\ell) \to \Xi(\ell+1))$ if and only if $g\xi = 0$ for all $g\in \OI(n-1,n)$. Thus the image of $ \ker( \eta_{\ell}\colon \Xi(\ell) \to \Xi(\ell+1))$ in $\Omega^\ell \Xi(0)$ is those $\sum_{f\in \FI(\ell,n)} \xi_f $ such that 
\begin{multline*} 0  = g\left(\sum_{f\in \FI(\ell,n)} \xi_f \right) = \sum_{f\in \FI(\ell,n)}\sum_{\substack{ f' \in \FI(\ell,n-1)\\ g\circ f' = f}} g|^{[n]\setminus \im f}_{[n-1]\setminus \im f'} \cdot \xi_f\\= \sum_{f'\in \FI(\ell,n-1)} g|^{[n]\setminus \im (g\circ f')}_{[n-1]\setminus \im f'} \cdot \xi_{g\circ f'} \in \bigoplus_{f'\in \FI(\ell,n-1)} \Xi(0)_{[n-1]\setminus \im f'}\end{multline*}
by \autoref{prop:Omegap}.
The restrictions $g|^{[n]\setminus \im f}_{[n-1]\setminus \im f'}$ are still monotone, so if $\xi_f\in D_{[n]\setminus \im f}$, each term in this sum vanishes. This shows that $\Omega^\ell D$ is contained in the image of $\ker( \eta_{\ell}\colon \Xi(\ell) \to \Xi(\ell+1))$ under the isomorphism. Next we want to see that this image is contained in $\Omega^\ell D$. Every term
\[g|^{[n]\setminus \im (g\circ f')}_{[n-1]\setminus \im f'} \cdot \xi_{g\circ f'}\]
in this sum has to be zero. By strategically choosing $g$ and $f'$, we can satisfy the vanishing condition of \autoref{lem:adjoint_kernel} for every $\xi_f$.
Indeed, for every $f\in \FI(\ell,n)$ and monotone injection  $g' \colon [n-\ell-1] \to [n] \setminus \im f$, choose $f' \in \FI(\ell,n-1)$ and $g \in \OI(n-1,n)$ such that $f = g \circ f'$ and the diagram
\[ \xymatrix{
[n-\ell-1] \ar[rrrd]^{g'} \ar[d]^\cong \\
[n-1] \setminus \im f' \ar[rrr]^{g|^{[n]\setminus \im f}_{[n-1]\setminus \im f'}} &&& [n] \setminus \im f
}\]
commutes, where the left map is the unique monotone bijection.
\end{proof}

\begin{lem}\label{lem:XiellXid}
Fix $d\ge \ell$. There is a map of $\FI^\op$-modules $u_{\ell,d}\colon \Xi(\ell) \to \Xi(d)$ that sends $\xi_{d,\ell}$ to $\xi_{d,d}$ in degree $d$.
\end{lem}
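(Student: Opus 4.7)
The plan is to apply \autoref{prop:mapseq} by exhibiting a compatible sequence $(\xi_n)_{n \ge \ell}$ in $\Xi(d)$ with $\xi_d = \xi_{d,d}$. I propose to set $\xi_n = 0$ for $\ell \le n < d$, $\xi_d = \xi_{d,d}$, and for $n > d$
\[\xi_n = \sum_{\substack{S \subseteq [n],\; [\ell] \subseteq S\\ |S| = d}} \phi_S,\]
where $\phi_S\colon [n] \to [d] \sqcup \R$ is the embedding that sends $S$ order-preservingly onto $[d]$ (forcing $\phi_S(i) = i$ for $i \in [\ell]$, since $[\ell]$ consists of the smallest $\ell$ elements of $S$) and sends $[n] \setminus S$ order-preservingly onto $\{x_1, \ldots, x_{n-d}\}$.

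The verification of the delta conditions from \autoref{prop:mapseq} is the main content. For $\ell \le n \le d$ the conditions collapse: each $\delta_i \xi_{d,d}$ removes an element of $[d]$ from the image and hence vanishes in $\Xi(d)_{d-1}$, matching $\xi_{d-1} = 0$. For $n > d$, I would observe that $\delta_k \phi_S$ is zero whenever $k \in S$ (a value in $[d]$ disappears from the image), so only the $\phi_S$ with $k \notin S$ survive. When $k \le \ell$ every valid $S$ contains $k$, so $\delta_k \xi_n = 0$. When $k > \ell$, the surviving subsets biject with the subsets of $[n-1]$ indexing $\xi_{n-1}$ via the order-preserving identification $[n] \setminus \{k\} \cong [n-1]$, and after renormalizing the $\R$-values to the initial segment $\{x_1, \ldots, x_{n-1-d}\}$ per \autoref{rem:x}, one checks $\delta_k \phi_S = \phi_{S'}$ for the corresponding $S' \subseteq [n-1]$; summing yields $\delta_k \xi_n = \xi_{n-1}$.

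With the sequence verified, \autoref{prop:mapseq} produces the desired $u_{\ell, d}\colon \Xi(\ell) \to \Xi(d)$, and tracing the correspondence gives $u_{\ell, d}(\xi_{d, \ell}) = \xi_d = \xi_{d, d}$ in degree $d$. The only step requiring real care is the reindexing of the $\R$-values after deletion, but this is a routine consequence of the fact that the path component of an embedding into $[d] \sqcup \R$ depends only on the relative order of its $\R$-valued outputs, not on their specific values. No deeper obstacle is anticipated.
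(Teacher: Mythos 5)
Your proposal is correct and is essentially the paper's own proof in different notation: your sum over $d$-element subsets $S \subseteq [n]$ with $[\ell] \subseteq S$ is exactly the paper's sum over words in $y^\ell \cdot W(p,q)$ (take $S$ to be the set of positions of the letter $y$), and your verification of the conditions of \autoref{prop:mapseq}---terms where the deleted index maps into $[d]$ vanish, the remaining terms reindex after straightening the $\R$-values by an isotopy---is the same splitting argument used there. No changes needed.
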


\begin{proof}
Write $W(d, q)$ for the set of words in the alphabet $\{x, y\}$ that are permutations of $x^dy^q$.  Let $w=w_1 \dots w_{d+q} \in W(d, q)$, and construct an embedding $\phi_w \colon [d+q] \to [d] \sqcup \R$ so that, for all $i \in [d+q]$,
\[w_i = x  \implies \phi_w(i) \in \R \quad \text{and} \quad w_i = y  \implies \phi_w(i) \in [d],\]
and moreover, if $w_i = w_j$ for $i < j$, then $\phi_w(i) < \phi_w(j)$.  In other words, $\phi_w$ is determined by requiring that it be a monotonic map to $\R$ or $[d]$ after restricting its domain to the positions of $x$ or $y$. 

Set $p, q \in \N$ so that $d + p = n$ and $\ell + q = d$.  We now argue that the assignment
\[
\xi_n = \sum_{w \; \in \; y^\ell \cdot W(p, q)} [\phi_{w}]
\]
satisfies the hypothesis of \autoref{prop:mapseq}, giving a map $\Xi(\ell) \to \Xi(d)$; note that $\xi_d = \xi_{d,d}$, because $p=q=0$ in this case, so $\xi_{d,\ell} \mapsto \xi_{d,d}$, as required.

Recall that the class of an embedding $\phi \colon [n] \hookrightarrow [d] \sqcup \R$ with $[d] \not \subseteq \im \phi$ vanishes in $\Xi(d)$.  By design, however, the letter $y$ appears exactly $d$ times in each element of $y^\ell \cdot W(p, q)$, so the terms of $\xi_n$ are nonzero.

If $i \in [\ell]$, then $i \not \in \im(\phi_w \circ \delta_i)$ for $w \in y^\ell \cdot W(p, q)$, because of the prefix $y^\ell$.  Consequently, $\delta_i \xi_n = 0 \in \Xi(d)_{n-1}$.  On the other hand, if $i > \ell$, then the sum splits
\[
\delta_i \xi_n   = \sum_{\substack{w \in y^\ell \cdot W(p, q) \\ w_i = x}} [\phi_{ w} \circ \delta_i] \quad + \quad \sum_{\substack{w \in y^\ell \cdot W(p, q) \\ w_i = y}} [\phi_{ w} \circ \delta_i].
\]
If $w_i = y$, then $\phi_{w}(i) \in [d]$, and so $[d] \not \subseteq \im(\phi_{w} \circ f)$, proving that these terms vanish in $\Xi(d)$.  On the other hand, if $w_i = x$, then $\phi_{w} \circ f$ is isotopic to $\phi_{w'}$ where $w' = w_1 \dots \widehat{w_i} \dots w_{n}$, since any two monotone embeddings $\phi^{-1}(\R) \to \R$ are related by a straight-line isotopy.  The words in $W(p,q)$ that have $x$ in position $i$ are in bijection with $W(p-1, q)$ by deleting this $x$.  Therefore, we have
\[
\xi_n \circ f  = \sum_{\substack{w \in y^\ell \cdot W(p, q) \\ w_j = x}} [\phi_{w} \circ f] =  \sum_{\substack{w \in  y^\ell \cdot W(p, q) \\ w_j = x}} [\phi_{w'} ] = \sum_{w' \in  y^\ell \cdot W(p-1, q)} [\phi_{w'} ] = \xi_{n-1}. \qedhere
\]
\end{proof}

\begin{thm} \label{thm:FJ}
Let $\ell,m\in \N$, then there is a tower
\[ \Hom_{\FI^{\op}}(\Xi(\ell),\Xi(m)) \surject \dots \surject  \Hom_{\FI_{\le d}^{\op}}(i^*_d\Xi(\ell),i_d^*\Xi(m))  \surject \dots \surject  \Hom_{\FI_{\le 0}^{\op}}(i^*_0\Xi(\ell),i_0^*\Xi(m)). \]
If $d\ge \ell,m$, the factors are given by the short exact sequence
\[ 0 \longrightarrow (\Omega^mD)_{d} \longrightarrow \Hom_{\FI^{\op}_{\le d}}(i^*_d\Xi(\ell),i^*_d\Xi(m)) \longrightarrow \Hom_{\FI^{\op}_{\le d-1}}(i^*_{d-1}\Xi(\ell),i^*_{d-1}\Xi(m)) \longrightarrow 0.\]
Otherwise, $i^*_d\Xi(\ell)$ or $i^*_d\Xi(m)$ is zero. 
\end{thm}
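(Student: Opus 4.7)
The plan is to reduce everything to a concrete question about deletions on $\Xi(m)$ via the injection $j_d$ of \autoref{prop:mapseq}. Each $\Hom_{\FI^{\op}_{\le d}}(i^*_d\Xi(\ell),i^*_d\Xi(m))$ embeds in $\Xi(m)_d$ as the subgroup $T_d$ of those $\xi_d$ with $\delta_i\xi_d=0$ for $i\in[\ell]$ and $\delta_i\xi_d$ independent of $i\in[d]\setminus[\ell]$, since any partial tower $(\xi_n)_{\ell\le n\le d}$ is reconstructed from $\xi_d$ by acting with the inclusion $[n]\subset[d]$, which is a composite of $\delta_i$'s for $i>\ell$. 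Under this identification the restriction map $T_d\to T_{d-1}$ is $\xi_d\mapsto\delta_{\ell+1}\xi_d$. The ``otherwise'' cases are then immediate: when $d<\ell$ or $d<m$, the defining condition $[k]\subseteq\im\phi$ is violated by pigeonhole, forcing $\Xi(\ell)_n=0$ or $\Xi(m)_n=0$ for all $n\le d$.

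For $d\ge\ell,m$, the kernel of $T_d\to T_{d-1}$ is easy: an element $\xi_d$ with $\delta_{\ell+1}\xi_d=0$ has $\delta_i\xi_d=0$ for all $i\in[d]$, and by \autoref{lem:adjoint_kernel} applied to $M=\Xi(m)$ this kernel equals $\ker(\eta_m)_d$, which \autoref{cor:ell_step} identifies with $(\Omega^m D)_d$. For the surjectivity of $T_d\to T_{d-1}$, I would construct explicit lifts by inserting a fresh real point into each embedding representative of $\xi_{d-1}\in T_{d-1}$, using the combinatorial summation pattern $\sum_{w\in y^\ell W(p,q)}[\phi_w]$ from the proof of \autoref{lem:XiellXid}; the verification that $\delta_i\xi_d$ vanishes for $i\le\ell$ and equals $\xi_{d-1}$ for $i>\ell$ would parallel the word-bijection argument there.

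Given step-by-step surjectivity, the top surjection $\Hom_{\FI^{\op}}(\Xi(\ell),\Xi(m))\surject\Hom_{\FI_{\le d}^{\op}}(i^*_d\Xi(\ell),i_d^*\Xi(m))$ is automatic from the Mittag--Leffler criterion, since \autoref{prop:mapseq} identifies $\Hom_{\FI^{\op}}(\Xi(\ell),\Xi(m))$ with the inverse limit of $(T_d)_d$. The hard part will be the explicit surjectivity step: the formula from \autoref{lem:XiellXid} is tailored to the target $\Xi(d)$, so adapting it to an arbitrary $\Xi(m)$ while preserving both the $[\ell]$-vanishing and the constancy on $[d]\setminus[\ell]$ will need careful bookkeeping, with the $(\Omega^m D)_d$ appearing in the SES accounting for the non-uniqueness of the lift.
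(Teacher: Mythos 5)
Your overall architecture is reasonable, and several pieces match the paper: the identification of the kernel of $T_d \to T_{d-1}$ with $(\Omega^m D)_d$ via \autoref{lem:adjoint_kernel} and \autoref{cor:ell_step} is exactly the paper's first step, the degenerate cases are handled correctly, and passing to the inverse limit for the top surjection is fine once step-by-step surjectivity is known. (Your claim that the image of $\Hom_{\FI^{\op}_{\le d}}(i^*_d\Xi(\ell),i^*_d\Xi(m))$ in $\Xi(m)_d$ is exactly $T_d$ does require a short check, via the cosimplicial identities, that the two conditions on $\xi_d$ propagate to all lower levels; that is routine.) The genuine gap is the one-step surjectivity $T_d \to T_{d-1}$, which is the heart of the theorem, and your proposed lift does not work. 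The cancellation in the proof of \autoref{lem:XiellXid} is special to the target $\Xi(d)$: deleting a letter that lies in $[d]$ kills the class because $[d]$ is no longer contained in the image. For a general target $\Xi(m)$ with $m<d$ there is no such vanishing, so ``inserting a fresh real point'' into the representatives of $\xi_{d-1}$ produces uncancelled cross terms $\delta_i\xi_{d-1}$. Concretely, take $\ell=m=0$ and $\xi_2=x_2x_1\in T_2$: shuffling a new largest point into all positions gives $x_3x_2x_1+x_2x_3x_1+x_2x_1x_3$, whose image under $\delta_1$ is $x_1x_2+2\,x_2x_1\neq \xi_2$, and plain concatenation $x_2x_1x_3$ also fails ($\delta_1$ gives $x_1x_2$). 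A correct lift here is $x_3x_2x_1$, whereas for $\xi_2=x_1x_2$ the correct lift is $x_1x_2x_3$; so no uniform insertion pattern works, and this is not a matter of bookkeeping.

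Note how the paper avoids this: it never lifts a general element one step. It lifts only elements of the kernel, for which \emph{every} deletion vanishes, so that concatenating with $x_{d-m+1}\dots x_{d-m+n}$ does produce a compatible sequence all the way to $\Hom_{\FI^{\op}}(\Xi(\ell),\Xi(m))$; and for $\ell<d$ it first reinterprets such a $\xi_d$ as a map out of $\Xi(d)$ (possible precisely because all deletions vanish) and precomposes with $u_{\ell,d}$ from \autoref{lem:XiellXid} --- the shuffle formula is used only there, where the $[d]$-vanishing mechanism is available. Surjectivity of all restriction maps then follows by induction up the tower: given $\varphi$ at level $d$, lift its restriction to level $d-1$ to the top (induction, the base level $\max(\ell,m)$ being the case where the previous level vanishes), restrict that lift back to level $d$, and correct the difference by a kernel element, which extends to the top. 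To repair your proof you need essentially this decomposition into a part that lifts by the kernel mechanism plus an inductively lifted part, or a genuinely different construction of one-step lifts; the adaptation of the shuffle formula you sketch cannot be made to work as stated.
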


\begin{proof}
We organize the proof in the following way. We first show that $(\Omega^mD)_{d}$ is {isomorphic to} the kernel in the short exact sequence. Next we prove that elements in this kernel can be extended to $\Hom_{\FI^\op}(\Xi(\ell),\Xi(m))$ if $\ell = d$, and subsequently for all $\ell<d$. This shows that every map in a factor of the tower can be extended to the top. Therefore every map extends, and thus, all restriction maps are surjective.

Under the isomorphism from \autoref{prop:mapseq}, a morphism in $\ker(\res^d_{d-1})$ corresponds to an element $\xi \in \Xi(m)_d$. However, since this morphism is in the kernel of $\res^d_{d-1}$, it vanishes in degree $d-1$, which shows that $f\xi=0$ for all $f\in \OI(d-1,d)$. On the hand, any such $\xi$ describes a map in $\ker(\res^d_{d-1})$ by \autoref{prop:mapseq}. The subgroup of such elements is isomorphic to $(\Omega^mD)_{d}$ using \autoref{lem:adjoint_kernel} and \autoref{cor:ell_step}.

Next, we show that the maps in $\ker(\res^d_{d-1})$ are in the image of the restriction
\[ \res_{d} \colon  \Hom_{\FI^{\op}}(\Xi(\ell),\Xi(m))  \longrightarrow \Hom_{\FI^{\op}_{\le d}}(i^*_d\Xi(\ell),i^*_d\Xi(m)).\]
Let $\xi_d\in \Xi(m)_d$ correspond to some map $\varphi \in \ker(\res^d_{d-1})$ that we want to extend.  Being in the kernel implies that $f \xi_d = 0$ for every $f \in \OI(d-1, d)$.  We start with $\ell = d$.  Set $\xi_{d+n}\in \Xi(m)_{d+n}$ to be the concatenation of $\xi_d$ and the word $ x_{d-m+1} \dots x_{d-m+n}$. Then, for every $f'\in \OI(d+n',d+n)$, \[f' \xi_{d+n} = \begin{cases}  \xi_{d+n'} & [d] \subseteq \im f'\\ 0 & [d] \not\subseteq \im f',\end{cases}\]
because $f'$ splits into two actions---one on the $\xi_d$ part of the concatenation, and one on the $x_{d-m+1} \dots x_{d-m+n}$ part---and the action on $\xi_d$ gives zero for every $f \in \OI(d-1, d)$.

We turn our attention to the case $\ell < d$.  The element $\xi_d$ corresponds to the map $\varphi \colon i^*_d \Xi(\ell) \to i^*_d \Xi(m)$, but it also defines a map $\varphi' \colon i^*_d\Xi(d) \to i^*_d\Xi(m)$ because $f \xi_d = 0$ for every $f \in \OI(d-1, d)$, as the map is in $\ker(\res^d_{d-1})$.  Running the case $\ell = d$ on the map $\varphi'$ gives an extended map $\Xi(d) \to \Xi(m)$.  Precompose this map with the map $u_{\ell,d}\colon \Xi(\ell) \to \Xi(d)$ from \autoref{lem:XiellXid}.  This composite sends $\xi_{d,\ell}$ to $\xi_d$, and so restricts to the original map $\varphi$. 
\end{proof}

\begin{defn}
Let $M,M',M''$ be $\mathcal C$-modules, then $\Hom_{\mathcal C}^M(M',M'')$ shall denote the subgroup of maps $M'\to M''$ that factor through $M$.
\end{defn}

\begin{prop}\label{prop:factoring} For $d\ge \ell,m$, there is a short exact sequence
\[ 0 \longrightarrow \Hom_{\FI^{\op}}^{\Xi(d+1)}(\Xi(\ell),\Xi(m)) \longrightarrow \Hom_{\FI^{\op}}(\Xi(\ell),\Xi(m)) \longrightarrow  \Hom_{\FI_{\le d}^{\op}}(i^*_d\Xi(\ell),i_d^*\Xi(m)) \longrightarrow 0.\]
\end{prop}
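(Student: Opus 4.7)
The plan is to establish the three conditions of exactness. The inclusion on the left is automatic from the definition of $\Hom_{\FI^\op}^{\Xi(d+1)}$ as a subgroup of $\Hom_{\FI^\op}$, and the surjectivity on the right is exactly the content of \autoref{thm:FJ}. The substance of the statement is therefore to identify the kernel $\ker(\res_d)$ of the restriction with $\Hom^{\Xi(d+1)}(\Xi(\ell),\Xi(m))$.

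The easy containment $\Hom^{\Xi(d+1)}\subseteq\ker(\res_d)$ follows from $\Xi(d+1)$ being supported in degrees $\ge d+1$. For any $n\le d$, any embedding $\phi\colon[n]\to[d+1]\sqcup\R$ has $|\im\phi|\le n<d+1$, so $[d+1]\not\subseteq\im\phi$ and thus $A_{[n]}=X_{[n]}$, forcing $\Xi(d+1)_{[n]}=H_0(X_{[n]},A_{[n]})=0$. A map factoring through $\Xi(d+1)$ therefore vanishes on each such degree and restricts to zero on $\FI_{\le d}$.

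For the reverse containment, I would start with $\varphi\in\ker(\res_d)$. By \autoref{prop:mapseq}, $\varphi$ is encoded by a compatible sequence $(\xi_n)_{n\ge\ell}$ in $\Xi(m)$, and the assumption forces $\xi_n=0$ for all $n\le d$. The compatibility relations then give $\delta_i\xi_{d+1}=0$ for every $i\in[d+1]$: for $i\in[\ell]$ this is immediate from \autoref{prop:mapseq}, and for $i\in[d+1]\setminus[\ell]$ it follows from $\delta_i\xi_{d+1}=\xi_d=0$. Consequently, by \autoref{lem:adjoint_kernel} combined with \autoref{cor:ell_step}, the element $\xi_{d+1}$ lies in $(\Omega^mD)_{d+1}$. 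The extension argument from the proof of \autoref{thm:FJ} (specifically its ``$\ell=d$'' clause, applied here with starting degree $d+1$) then produces a morphism $\psi\colon\Xi(d+1)\to\Xi(m)$ whose sequence $(\eta_n)_{n\ge d+1}$ satisfies $\eta_{d+1}=\xi_{d+1}$, with the higher $\eta_n$ defined by concatenating $\xi_{d+1}$ with monotone strings of $x$'s.

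Precomposing $\psi$ with the map $u_{\ell,d+1}\colon\Xi(\ell)\to\Xi(d+1)$ from \autoref{lem:XiellXid}, which sends $\xi_{d+1,\ell}\mapsto\xi_{d+1,d+1}$, yields a factorization $\psi\circ u_{\ell,d+1}\in\Hom^{\Xi(d+1)}(\Xi(\ell),\Xi(m))$ that evaluates to $\xi_{d+1}$ on $\xi_{d+1,\ell}$, matching $\varphi$ at that element. The hardest step is to upgrade agreement at $\xi_{d+1,\ell}$ to agreement as maps of $\FI^\op$-modules: the difference $\varphi-\psi\circ u_{\ell,d+1}$ lies in $\ker(\res_{d+1})$, and one iterates the same construction on this difference, assembling the successive factorizations into a single composite through an appropriate direct sum of copies of $\Xi(d+1)$ and exploiting the tower of surjections in \autoref{thm:FJ} to control the convergence, ultimately realizing $\varphi$ as an element of the subgroup $\Hom^{\Xi(d+1)}$ generated by such composites.
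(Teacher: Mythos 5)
Your outline is the paper's proof: left-exactness is definitional, surjectivity is \autoref{thm:FJ}, and the kernel is attacked exactly as in the paper by iteratively subtracting corrections built from the extension step in the proof of \autoref{thm:FJ} and summing, the sum being degreewise finite because the $i$-th correction vanishes in degrees $\le d+i-1$. Your first correction---reading off $\xi_{d+1}\in(\Omega^m D)_{d+1}$ via \autoref{prop:mapseq}, \autoref{lem:adjoint_kernel} and \autoref{cor:ell_step}, extending, and precomposing with $u_{\ell,d+1}$---is precisely the paper's $\psi_{d+1}$; also note that the step you call hardest at that stage is actually easy (agreement at $\xi_{d+1,\ell}$ together with vanishing in degrees $\le d$ gives agreement of the restrictions to $\FI_{\le d+1}$ by the injectivity statement in \autoref{prop:mapseq}).

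The genuine gap is the final assembly, which your last sentence does not deliver. First, the iteration as you set it up produces at stage $i$ a correction factoring through $\Xi(d+i)$, not through $\Xi(d+1)$: after stage one the difference lies in $\ker(\res_{d+1})$, so the next extension goes through $(\Omega^m D)_{d+2}$ and $u_{\ell,d+2}$, and so on. Second, ``a single composite through a direct sum of copies of $\Xi(d+1)$'' and ``the subgroup generated by such composites'' do not give membership in $\Hom_{\FI^{\op}}^{\Xi(d+1)}(\Xi(\ell),\Xi(m))$ as defined: a subgroup is closed only under finite sums, your sum is infinite, and factoring through a direct sum of copies of $\Xi(d+1)$ (let alone through the varying objects $\Xi(d+i)$) is not factoring through $\Xi(d+1)$. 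The paper closes this point with the one-line assertion that every correction $\psi_{d+i}$ factors through $\Xi(d+1)$ itself; granting that, one can write $\psi_{d+i}=b_i\circ u_{\ell,d+1}$ with $b_i$ vanishing in degrees $<d+i$, so that $\sum_i b_i$ is a well-defined map $\Xi(d+1)\to\Xi(m)$ and the total map is a single composite through $\Xi(d+1)$. To finish your argument you must supply exactly this rerouting of a correction built through $\Xi(d+i)$, $i\ge 2$, into a composite through $\Xi(d+1)$, and you should be aware that it is a substantive claim, not bookkeeping: by \autoref{prop:mapseq} the possible degree-$(d+2)$ values of maps $\Xi(\ell)\to\Xi(d+1)$ and of maps $\Xi(d+1)\to\Xi(m)$ are constrained (already for $\ell=m=0$, $d=0$ the degree-$2$ values of the two factors are, respectively, symmetric and antisymmetric under $\SG_2$, so every composite $\Xi(0)\to\Xi(1)\to\Xi(0)$ vanishes in degree $2$, while the second correction need not). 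So as written the last step fails, and any complete proof has to either justify the rerouting or work with composites through all objects $\ge d+1$ rather than through $\Xi(d+1)$ alone.
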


\begin{proof}
Let $\varphi_d \colon \Xi(\ell) \to \Xi(m)$ be a map that restricts to zero in degrees $\le d$. Let $\psi_{d+1} \colon \Xi(\ell) \to \Xi(m)$ be the extension of $\res_{d+1} \varphi_d \colon i^*_{d+1} \Xi(\ell) \to i^*_{d+1}\Xi(m)$ from the proof of \autoref{thm:FJ}. By construction, $\psi_{d+1}$ factors through $\Xi(d+1)$. Let $\varphi_{d+1} = \varphi_d - \psi_{d+1}$, which now restricts to zero in degrees $\le d+1$. Iterating this procedure, we obtain a sequence of maps $\varphi_{d+i}, \psi_{d+i} \colon \Xi(\ell) \to \Xi(m)$ and equations 
\[\varphi_{d+i+1} = \varphi_{d+i} - \psi_{d+i+1}.\]
Restricting the infinite sum
\[ \psi_{d+1} + \psi_{d+2} + \cdots \]
 to any finite set of degrees in $\FI^\op$, it is a finite sum that agrees with $\varphi_d$. Because every $\psi_{d+i}$ factors through $\Xi(d+1)$, we obtain the result.
\end{proof}

\subsection{Combinatorial description of $\FJ$}\label{sec:combFJ}

In this section, we give a combinatorial description of $ \FJ(\ell,m) = \Hom_{\FI^{\op}}(\Xi(\ell),\Xi(m))$ using Lie brackets. By \autoref{thm:FJ}, we have a tower
\[ \FJ(\ell,m) \surject \dots \surject  \FJ_{\le d}(\ell,m) \surject \dots \surject  \FJ_{\le \max(\ell,m)}(\ell,m), \]
and we will find bases for the factors
\[ \ker \Big(\FJ_{\le d}(\ell,m) \surject \FJ_{\le d-1}(\ell,m) \Big) \cong \begin{cases} (\Omega^m D)_d & d-1\ge \ell,m\\ 0 &\text{otherwise}\end{cases}\]
that lift to a basis of $\FJ(\ell,m)$.

Let us start by describing $D_n\subset \Xi(0)_n$. For a finite set $S$, let $A(S)$ be the free associative $\Z$-algebra on the alphabet $S$. For $s\in S$, let $\varepsilon_s \colon A(S) \to A(S\setminus s)$ be the map defined sending $s$ to the empty word and $t$ to $t$ for all other $t\in S$. The intersection 
\[LP(S) = \bigcap_{s\in S} \ker(\varepsilon_s)\]
 is generated by all products of iterated Lie brackets which contain all elements of $S$; see Miller--Wilson \cite[Section 2.3]{MillerWilson16}. An example of an element in $LP([3])$ is
 \[ [[1,2],3] + [3,2][1,3] = (123-213-312+321) + (3213-2313-3231+2331).\]
 Identify 
 \[ \bigoplus_{\ell = 0}^{|S|} \Z\FI([\ell], S)\]
with the subgroup generated by injective words in $A(S)$. Then, $\Z\FI(n,n) \cap LP([n]) = \Z\SG_n \cap LP([n])$ is generated by products of iterated Lie brackets in which every element of $[n]$ appears exactly once. Moreover, a basis of $\Z\SG_n \cap LP([n])$ has the following description. For $S\subseteq [n]$ with $|S| \geq 2$, let $L(S)$ denote the set of Lie brackets
\[ L(S) = \{ [[\dots[[s_1, s_2], s_3], \dots ], s_{|S|}] \; \mid \text{$s_1,\dots, s_{|S|} \in S$, $\ s_i \neq s_j$ if $i \neq j$, and $s_1 = \min S$}\}. \]
Since the elements $s_2, \dots, s_{|S|}$ may be permuted, $L(S)$ has $(|S|-1)!$ elements.
Then the set 
\[ \bigsqcup_{\substack{ S_1 \sqcup \dots \sqcup S_k = [n] \\ |S_i| \ge 2, k\ge 0\\ \min S_1 < \dots < \min S_k}} L(S_1)  \cdots L(S_k) \subset A([n])\]
of products of Lie brackets gives a basis of $\Z\SG_n \cap LP([n])$. See Miller--Wilson \cite[Section 2.3]{MillerWilson16} for a more detailed treatment. As an example, this basis of $\Z\SG_4\cap LP([4])$ is
\begin{gather*}[[[1,2],3],4],\, [[[1,2],4],3],\, [[[1,3],2],4],\, [[[1,3],4],2],\,[[[1,4],2],3],\,[[[1,4],3],2],\, \\ [1,2][3,4],\, [1,3][2,4],\,[1,4][2,3].\end{gather*}
An easy combinatorial bijection using cycle decompositions shows that the cardinality of this set equals the number of derangements in $\SG_n$.

Let 
\[  \Z\SG_n \xrightarrow{(-)^{{-1}}} \Z\SG_n\]
be the linear map that inverts basis elements. We claim that the image of
\[ D_n \subset \Z\SG_n \xrightarrow{(-)^{{-1}}} \Z\SG_n \subset A([n])\]
is precisely $\Z\SG_n \cap LP([n])$. Recall that by \autoref{lem:adjoint_kernel}, $\xi \in \Xi(0)_n$ is in $D_n$ if $\delta_i \cdot \xi = 0 $ for all $i\in[n]$, where $\delta_i\in \OI(n-1,n)$ such that $i\not\in \im \delta_i$. 
For example, 
\[([[1,2],3])^{-1} = (123-213-312+321)^{-1} = 123-213-231+321\]
and 
\[ \xi = (123-213-231+321) \cdot x_1x_2x_3 = x_1x_2x_3 - x_2x_1x_3 - x_2x_3x_1 + x_3x_2x_1,\]
which drops to zero under the action of $\delta_i$:
\begin{align*}
\delta_1 \cdot \xi &= x_1x_2 - x_1x_2 - x_2x_1+x_2x_1 = 0\\
\delta_2 \cdot \xi &= x_1x_2 - x_1x_2 - x_2x_1+x_2x_1 = 0\\
\delta_3 \cdot \xi &= x_1x_2 - x_2x_1 - x_1x_2+x_2x_1 = 0.
\end{align*}
We can connect $\delta_i$ and $\varepsilon_i$ by observing that $\sigma \xi_{d,0} \in \ker(\delta_i)$ if and only if $\sigma^{-1} \in \ker(\varepsilon_i)$, which proves our claim.

Elements $\xi\in D_d\subset \Xi(0)_d$ correspond to maps in $\Hom_{\FI_{\le d}^{\op}}(i^*_d\Xi(d),i_d^*\Xi(0))$ by where the map sends $\xi_{d,d}$. In \autoref{thm:FJ}, we extended these to maps in $\Hom_{\FI^{\op}}(\Xi(d),\Xi(0))$, by sending $\xi_{n,d}$ to $\xi$ concatenated with $x_{d+1}\dots x_n$. So the bracket $[[1,2],3]$ results in the map  $\Hom_{\FI^{\op}}(\Xi(3),\Xi(0))$ that in degree $5$ sends $\xi_{5,3} = 123x_1x_2$ to
\[  x_1x_2x_3x_4x_5 - x_2x_1x_3x_4x_5 - x_2x_3x_1x_4x_5 + x_3x_2x_1x_4x_5.\]

Let us continue by describing $(\Omega^m D)_n \subset \Xi(m)_n$. By the isomorphism in \autoref{prop:Xi(ell)description}, an element
\[ (f,\sigma \xi_{n-m,0}) \in \bigoplus_{f\in\FI(m,n)} \Xi(0)_{[n]\setminus \im f}\]
corresponds the element $\tau \xi_{n,m} \in \Xi(m)_n$, where $\tau \in \SG_n$ is defined by
\[ \tau(i) = \begin{cases} \sigma(i) & i \in [n] \setminus \im f\\ f^{-1}(i) & i \in \im f.\end{cases}\]
Restricting the inverse of $\tau$, we have $(\tau^{-1})|_{[m]} = f$ and $(\tau^{-1})|^{[n]\setminus \im f}_{[n]\setminus [m]} = \sigma$. Thus, if $\sigma \in D_{[n]\setminus \im f}$ corresponds to a basis element given by a product of Lie brackets, the map
\[ \Z\SG_n \xrightarrow{(-)^{-1}} \Z\SG_n \subset A([n]),\]
sends $\tau$ to the word given by $f$ concatenated with the product of Lie brackets given by $\sigma$. For example, 
\[ 31[2,4] = 3124-3142\]
corresponds to the element
\[ (3124-3142)^{-1} \cdot 12x_1x_2 = 2x_11x_2 -2x_21x_1 \in \Xi(2)_4.\]

As before, elements $\xi\in (\Omega^mD)_d\subset \Xi(0)_d$ correspond to maps in $\Hom_{\FI_{\le d}^{\op}}(i^*_d\Xi(d),i_d^*\Xi(m))$ by where the map sends $\xi_{d,d}$. In \autoref{thm:FJ}, we extend these to maps in $\Hom_{\FI^{\op}}(\Xi(d),\Xi(m))$, by sending $\xi_{n,d}$ to $\xi$ concatenated with $x_{d-m+1}\dots x_{n-m}$. So the product $31[2,4]$ results in the map  $\Hom_{\FI^{\op}}(\Xi(4),\Xi(2))$ that in degree $6$ sends $\xi_{6,4} = 1234x_1x_2$ to
\[  2x_11x_2x_3x_4 -2x_21x_1x_3x_4.\]

Next, we describe the maps $u_{\ell,d} \colon \Xi(\ell) \to \Xi(d)$ for $\ell \le d$ from \autoref{lem:XiellXid}.  
The formula for $u_{\ell, d}$ given there is equivalent to
\[
u_{\ell, d}(\xi_{n,\ell}) = (1 \dots \ell) \cdot ((\ell+1) \dots d \; \shuffle \; x_1 \dots x_{n-d}).
\]
Recall that the shuffle product of $w_1\dots w_k$ and $w_1' \dots w_{k'}'$ is the sum
\[
w_1\dots w_k \; \shuffle \; w_1' \dots w_{k'}' = \sum_{\substack{\sigma \in \SG_{k+k'} \\ i < j \leq k \implies \sigma(i) < \sigma(j) \\ k < i < j \implies \sigma(i) < \sigma(j)}} \sigma \cdot (w_1\dots w_k w_1' \dots w_{k'}').
\]
For example, the map $u_{3,5}\colon \Xi(3) \to \Xi(5)$ sends $\xi_{7,3} = 123x_1x_2x_3x_4$ to
\[ 12345x_1x_2 + 1234x_15x_2 + 123x_145x_2 + 1234x_1x_25 + 123x_14x_25 + 123x_1x_245.\]
As a final step, by precomposing the map  in $\Hom_{\FI^{\op}}(\Xi(d),\Xi(m))$ corresponding to $\xi\in (\Omega^mD)_d$ (described above) with $u_{\ell,d} \colon \Xi(\ell) \to \Xi(d)$, we find that $\xi$ equally-well corresponds to a map in $\Hom_{\FI^{\op}}(\Xi(\ell),\Xi(m))$ if $d\ge \ell,m$.

In summary,
\[ \bigsqcup_{d \ge \ell,m} \bigsqcup_{f \in \FI(m,d)} \bigsqcup_{\substack{ S_1 \sqcup \dots \sqcup S_k = [d]\setminus \im f \\ |S_i| \ge 2, k\ge 0\\ \min S_1 < \dots < \min S_k}} L(S_1)  \cdots  L(S_k)\]
indexes a  basis of the free abelian group  $\Hom_{\FI^{\op}}(\Xi(\ell),\Xi(m))$.
We now describe the composition law in terms of this basis. Any such basis element determines a sequence $(\xi_n)$ with $\xi_n\in \Xi(m)_n$ and therefore a sequence  $(\sigma_n)$ with $\sigma_n \in \Z\SG_n$ by requiring $\xi_n = \sigma_n \xi_{n,m}$. If a second basis element corresponds to a similar sequence $(\tau_n)$ with $\tau_n\in \Z\SG_n$, then the composition corresponds to the sequence $(\sigma_n\tau_n)$.

\subsection{Exactness of tail invariants}

In this section we prove that the $\FI^\op$-modules $\Xi(\ell)$ are flat. We do so by induction employing the following proposition.  It also has another useful corollary on the tail invariants of polynomial $\FI$-modules.
  
\begin{prop} \label{prop:gammases}

There is a natural short exact sequence
\[ 0 \longrightarrow M \longrightarrow \Z\FI(-, [1] \sqcup - ) \otimes_\FI M \longrightarrow \Omega M \longrightarrow 0\]
for all $M \in \FIopmod$, and this sequence is split if $M = \Xi(\ell)$.
\end{prop}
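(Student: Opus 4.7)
The idea is to produce the short exact sequence directly from a decomposition of $\Z\FI(S,[1]\sqcup T)$ according to the preimage of $1$, and then to exhibit a concrete section when $M=\Xi(\ell)$ by ``shifting'' the label $1$ into a new real point. The main obstacle will be verifying equivariance of the proposed section.

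\textbf{Constructing the sequence.} Any injection $f\colon S\hookrightarrow [1]\sqcup T$ has $f^{-1}(1)$ either empty or a singleton $\{s\}$, so
\[\Z\FI(S,[1]\sqcup -)\cong \Z\FI(S,-)\oplus \bigoplus_{s\in S}\Z\FI(S\setminus s,-)\]
as $\FI$-modules in the second variable. Tensoring with $M$ over $\FI$ and using Yoneda,
\[(\Z\FI(-,[1]\sqcup -)\otimes_\FI M)_S \cong M_S\oplus \bigoplus_{s\in S}M_{S\setminus s},\]
and the second summand equals $(\Omega M)_S$ by \autoref{prop:Omegap}. The natural inclusion $T\hookrightarrow [1]\sqcup T$ induces the inclusion of the first summand, yielding the map $M\to \Z\FI(-,[1]\sqcup -)\otimes_\FI M$, while projection onto the second summand provides the map to $\Omega M$. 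Tracing how $g\colon S'\to S$ acts on the decomposition, the $\emptyset$-summand is preserved while each $\{s\}$-summand is sent to the $\{s'\}$-summand if $g(s')=s$ and into the $\emptyset$-summand if $s\notin\im g$; this confirms that the inclusion and projection are both $\FI^\op$-equivariant, so we have an exact sequence at every level.

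\textbf{Splitting for $\Xi(\ell)$.} Using \autoref{prop:Xi(ell)description} to identify $\Omega\Xi(\ell)\cong \Xi(\ell+1)$, I propose the section $\sigma\colon \Xi(\ell+1)\to \Xi(\ell)\oplus \Xi(\ell+1)$ defined by $[\psi]\mapsto(-[\psi'],[\psi])$, where for a representative $\psi\colon S\hookrightarrow [\ell+1]\sqcup \R$ with $[\ell+1]\subseteq \im\psi$ and $s_1=\psi^{-1}(1)$, the embedding $\psi'\colon S\to [\ell]\sqcup \R$ places $\psi'(s_1)$ at a new real below all points of $\psi(S)\cap \R$, sends each $s$ with $\psi(s)\in\{2,\dots,\ell+1\}$ to $\psi(s)-1$, and fixes the other values. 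Then $[\psi']\in\Xi(\ell)_S$, and $\sigma$ is tautologically a set-theoretic section of the projection onto $\Xi(\ell+1)$.

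\textbf{Equivariance of $\sigma$.} The main obstacle is to show $g^*\sigma([\psi])=\sigma(g^*[\psi])$ for every $g\colon S'\to S$. The delicate case is $s_1\notin\im g$ combined with $[\ell]\subseteq\im(\psi'\circ g)$: here $g^*[\psi]=0$ in $\Xi(\ell+1)$, yet the $\Xi(\ell)$-summand of $g^*\sigma([\psi])$ receives two contributions that must cancel, namely $g^*(-[\psi'])=-[\psi'\circ g]$ and the mixing term $+[\phi\circ g]$ coming from the $\{s_1\}$-summand, where $\phi=\psi|_{S\setminus s_1}\in\Xi(\ell)_{S\setminus s_1}$ corresponds to $[\psi]$ under $\Xi(\ell+1)\cong\Omega\Xi(\ell)$. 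Cancellation is automatic because $s_1\notin\im g$ forces the extra real point of $\psi'$ to lie outside $\im g$, so $\psi'\circ g=\phi\circ g$ as embeddings $S'\to [\ell]\sqcup \R$. The remaining cases reduce either to naturality of the identification $\Omega\Xi(\ell)\cong\Xi(\ell+1)$ or to bilateral vanishing by a standard containment bookkeeping.
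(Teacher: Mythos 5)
Your proposal is correct and takes essentially the same route as the paper: the same decomposition of $\Z\FI(S,[1]\sqcup T)$ according to the preimage of $1$ (the paper packages it as a bimodule short exact sequence whose cokernel is $\FI$-projective, you compute degreewise), and the same two-term section --- under the degreewise identification your $[\psi]\mapsto(-[\psi'],[\psi])$ is exactly the paper's $(s,[\phi])\mapsto (f_s\otimes[\phi])-\big((S\subset[1]\sqcup S)\otimes[\phi_s]\big)$, with the new real point placed below rather than above all others, which is immaterial. The cancellation you isolate in the case $s_1\notin\im g$ is precisely the one the paper checks, so the argument goes through.
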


\begin{proof}
We will use the fact that
\[ \FI(S,[1] \sqcup T) \cong \FI(S,T) \sqcup \bigsqcup_{s \in S} \FI(S \setminus s,T),\]
where an element $f\in \FI(S,[1] \sqcup T)$ corresponds to $f|^T$ if $1\not\in \im f$ and to $(f^{-1}(1), f|_{S\setminus f^{-1}(1)}^T)$ if $1 \in \im f$.
Note that there is an injection
\[ \Z\FI(S,T) \longrightarrow \Z\FI(S,[1]\sqcup T)\]
that is natural in the pair $(S, T) \in (\FI^\op \times \FI)$, and that this map sends basis elements to basis elements; similarly
\[ \Z \FI(S,[1]\sqcup T) \longrightarrow  \bigoplus_{i\in S} \Z \FI(S \setminus i,T)\]
gives a surjection of $(\FI^\op \times \FI)$-modules. 
This yields a short exact sequence
\[ 0 \longrightarrow \Z\FI(-,-) \longrightarrow \Z\FI(-,[1]\sqcup -) \longrightarrow \Omega\Z\FI(-,-) \longrightarrow 0.\]
Considered as an $\FI$-module, $\Omega\Z\FI(-,-)$ is projective, in fact, representable on the sets $S \setminus s$.  Consequently, tensoring this short exact sequence with $M$ gives us
\[ 0 \longrightarrow M \longrightarrow \Z\FI(-, [1] \sqcup - ) \otimes_\FI M \longrightarrow \Omega\Z\FI(-,-) \otimes_\FI M \longrightarrow 0.\]
We conclude the proof of the first assertion by observing
\begin{align*} \Omega\Z\FI(-,-) \otimes_\FI M 
\cong \Omega M.
\end{align*}

To prove the second assertion, set $M = \Xi(\ell)$, so that the surjection takes the form
\[\Z\FI(S,[1] \sqcup -) \otimes_\FI \Xi(\ell) \longrightarrow (\Omega\Xi(\ell))_S \cong \bigoplus_{s\in S} \Xi(\ell)_{S\setminus s},\]
where $f\otimes \xi \in \Z\FI(S,[1] \sqcup T) \otimes \Xi(\ell)_T$ is sent to zero if $1\not\in\im f$ and to \[(f^{-1}(1), f|_{S\setminus f^{-1}(1)}^T \cdot \xi)\] if $1\in \im f$.
We will now construct a natural section
\[ \bigoplus_{s\in S} \Xi(\ell)_{S\setminus s} \longrightarrow \Z\FI(S,[1] \sqcup -) \otimes_\FI \Xi(\ell).\]
Let $\phi \colon S \setminus s \to [\ell] \sqcup \{x_1, \dots, x_{|S|-\ell - 1}\}$ for real numbers $x_1 <  \dots <  x_{|S|-\ell - 1}$.  We send $(s,[\phi])$ to
\[ (f_s \otimes [\phi]) - \big((S\subset [1] \sqcup S) \otimes [\phi_s]\big),\]
where $f_s\in \FI(S,[1] \sqcup(S\setminus s))$ is given by $f_s(s) = 1$ and by $S\setminus s \subset [1] \sqcup(S\setminus s )$ on the remaining elements; and $\phi_s$ is given by $\phi_s(s) = x_{|S|-\ell}$ with  $x_{|S|-\ell} > x_{|S|-\ell - 1}$, and by $\phi_s|_{S\setminus s} = \phi$.

To see that this assignment gives a section for each $S$, note that $(f_s)|_{S\setminus f^{-1}(1)}^{S\setminus s} = \id_{S \setminus s}$, and so
$f_s \otimes [\phi]$ is sent to 
\[ ( f_s^{-1}(1) ,  (f_s)|_{S\setminus f^{-1}(1)}^{S\setminus s} \cdot [\phi]) = (s, [\phi]),\]
and $\big((S\subset [1] \sqcup S) \otimes [\phi_s]\big)$ is sent to zero, as this inclusion does not have $1$ in its image.

It remains to prove that this defines a map of $\FI^\op$-modules. Let $g\in \FI(S',S)$, then
\[ g\cdot (s,[\phi]) = \begin{cases} (s', g|_{S'\setminus s'}\cdot [\phi] )& s\in  \im g\\ 0 &\text{otherwise,}\end{cases}\]
where $g(s') = s$. 
Via the proposed section, this is sent to
\[\begin{cases} (f'_{s'} \otimes g|_{S'\setminus s'}\cdot [\phi]) - \big((S'\subset [1]\sqcup S') \otimes[\phi'_{s'}]\big)& s \in\im g\\ 0 &\text{otherwise,}\end{cases}\]
where $f'_{s'}$ satisfies $f'_{s'}(s') = 1$ and $f'_{s'}|_{S' \setminus s'}$ equals the inclusion $S'\setminus s' \subset [1]\sqcup(S'\setminus s')$; and $\phi'_{s'}|_{S'\setminus s'} = \phi'$ and  $\phi'_{s'}(s') =x_{|S'|-\ell}$, where $\phi'\colon S'\setminus s' \to [\ell] \sqcup \{x_1, \dots, x_{|S'|-\ell-1}\}$ such that  $[\phi'] = g|_{S'\setminus s'}\cdot [\phi]$.

Acting by $g$ happens in the first tensor factor:
\begin{align*}
g\Big( (f_s \otimes [\phi]) - \big((S\subset [1]\sqcup S) \otimes [\phi_s]\big)\Big) = (gf_s \otimes [\phi]) - \big(g(S\subset [1]\sqcup S) \otimes [\phi_s]\big).
\end{align*}
If $g(s') = s$,
\[ gf_s = f'_{s'} \cdot g|_{S'\setminus s'},\quad g(S\subset [1]\sqcup S) = (S\subset [1]\sqcup S)([1]\sqcup g)\quad\text{and}\quad [\phi'_{s'}] =  g[\phi_s].\]
This proves naturality in the case that $s\in \im g$.  On the other hand, if $s\not\in \im g$, 
\[ g(S \subset [1]\sqcup S) = gf_s \big([1]\sqcup (S\setminus s) \subset [1] \sqcup S\big)\quad\text{and}\quad \big((S\setminus s) \subset S\big)\cdot [\phi_s] = [\phi].\]
Then,
\begin{align*}
g(S\subset [1]\sqcup S) \otimes [\phi_s] &=  gf_s \big([1]\sqcup (S\setminus s) \subset [1] \sqcup S\big) \otimes [\phi_s] \\
&=  gf_s \otimes \big((S\setminus s) \subset S\big) \cdot [\phi_s] \\
&= gf_s \otimes [\phi].
\end{align*}
This proves that 
\[ g\Big( (f_s \otimes [\phi]) - \big((S\subset [1]\sqcup S) \otimes [\phi_s]\big)\Big)  = 0\]
in this case. Naturality follows.
\end{proof}

Let us give a quick corollary about polynomial functors.

\begin{cor}\label{cor:polytailinvariants}
Let $M$ be an $\FI$-module presented in finite degree and assume that it has polynomial degree $\le d$. Then $M \otimes_\FI \Xi(\ell) \cong 0$ if $\ell> d$.
\end{cor}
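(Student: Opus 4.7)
My plan is to induct on the polynomial degree $d$, reducing the statement to a single identity relating $M\otimes_\FI \Xi(\ell+1)$ to the cokernel of $M\to \Sigma M$.

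\textbf{Key identity.} For every $\FI$-module $M$ and every $\ell \ge 0$,
\[
M\otimes_\FI \Xi(\ell+1) \;\cong\; \coker(M \to \Sigma M)\otimes_\FI \Xi(\ell),
\]
where $\Sigma$ is the shift of $\FI$-modules of \autoref{defn:poly}. I would obtain this by applying $M\otimes_\FI(-)$ to the split short exact sequence of \autoref{prop:gammases}, applied to the $\FI^\op$-module $\Xi(\ell)$. Splitness preserves exactness after tensoring, and \autoref{prop:Xi(ell)description} identifies $\Omega\Xi(\ell)$ with $\Xi(\ell+1)$. The middle term becomes $\Sigma M\otimes_\FI \Xi(\ell)$ via Fubini for coends combined with the coYoneda identity $\int^S M_S \otimes \Z\FI(S,X) = M_X$, applied with $X = [1]\sqcup T$. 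By naturality of these identifications, the first arrow of the resulting sequence is induced by the canonical inclusion $M\to \Sigma M$, so by right exactness of $\otimes_\FI$ its cokernel is $\coker(M \to \Sigma M)\otimes_\FI \Xi(\ell)$.

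\textbf{Auxiliary vanishing.} If $N$ is an $\FI$-module supported in finite degree, then $N\otimes_\FI \Xi(\ell)=0$ for every $\ell\ge 0$. Given $x\in N_n$ and $\xi\in \Xi(\ell)_n$, choose $n'$ large enough that $N_{n'}=0$, and pick $\xi'\in \Xi(\ell)_{n'}$ with $\iota^*\xi'=\xi$ for the standard inclusion $\iota\colon [n]\hookrightarrow[n']$; such $\xi'$ exists because any embedding $[n]\hookrightarrow [\ell]\sqcup \R$ extends to $[n']$ by sending the new indices to fresh points of $\R$. The coend relation then gives $x\otimes \xi = x\otimes \iota^*\xi' = N_\iota(x)\otimes \xi'=0$.

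Armed with these two facts, the induction on $d$ is immediate. For the base $d=0$, $M$ is eventually constant, so $\coker(M\to \Sigma M)$ is supported in finite degree; combining the auxiliary vanishing with the key identity gives $M\otimes_\FI \Xi(\ell)=0$ for every $\ell\ge 1$. For the step $d\ge 1$, $\coker(M\to \Sigma M)$ has polynomial degree $\le d-1$, so by the inductive hypothesis $\coker(M\to \Sigma M)\otimes_\FI \Xi(\ell')=0$ for $\ell'>d-1$; the key identity then yields $M\otimes_\FI \Xi(\ell)=0$ for $\ell>d$. The main obstacle is a naturality check inside the key identity: confirming that the first map of the tensored short exact sequence really is the one induced by $M\to \Sigma M$, while keeping the shift functor on $\FI$-modules distinct from the shift functor on $\FI^\op$-modules.
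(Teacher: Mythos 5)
Your proof is correct and takes essentially the same route as the paper's: induction on polynomial degree driven by the identification $M \otimes_\FI \Xi(\ell+1) \cong \coker(M \to \Sigma M) \otimes_\FI \Xi(\ell)$, obtained by tensoring the sequence of \autoref{prop:gammases} (with $\Omega\Xi(\ell)\cong\Xi(\ell+1)$) against $M$ and matching it with the tensored presentation of $\coker(M\to\Sigma M)$. The only divergence is the base case, where the paper pushes the induction down to eventually-zero modules and invokes \autoref{thm:main_concrete}, while you check directly from the defining relations of $\otimes_\FI$ that a finitely supported module pairs to zero with $\Xi(\ell)$ --- a harmless, if anything more self-contained, variant.
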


\begin{proof}
We prove this corollary by induction over the polynomial degree. For the purposes of this proof, we consider an $\FI$-module to be polynomial degree $\le -1$, if it is eventually zero. This is consistent with the induction because $M$ has polynomial degree $\le 0$ if and only if $\coker(M \to  \Sigma M)$ is eventually zero. 

Assume first $M$ is eventually zero. Thus by \autoref{thm:main_concrete}, $M\otimes_\FI \Xi(\ell) \cong 0$ for all $\ell >-1$.  This establishes the base case $d=-1$.

Let us now assume that $M$ has polynomial degree $\le d$ for some $d\ge 0$, so that we can assume $\coker(M \to \Sigma M)\otimes_\FI \Xi(\ell) \cong 0$ for all $\ell> d-1$ by induction. We now connect the tail invariants of $\coker(M \to \Sigma M)$ to those of $M$. 
Consider the right-exact sequences
\[ M \longrightarrow \Sigma M \longrightarrow \coker(M \to \Sigma M) \longrightarrow 0\]
and the (actually short exact) sequence from \autoref{prop:gammases}
\[ \Xi(\ell) \longrightarrow \Z\FI(-,[1]\sqcup -) \otimes_\FI \Xi(\ell) \longrightarrow \Omega \Xi(\ell) \longrightarrow 0,\]
where the $\Omega\Xi(\ell) \cong \Xi(\ell +1)$. Tensoring the first right-exact sequence by $\Xi(\ell)$, we get that
\[ M \otimes_\FI \Xi(\ell) \longrightarrow \Sigma M \otimes_\FI \Xi(\ell) \longrightarrow \coker(M \to \Sigma M)\otimes_\FI \Xi(\ell) \longrightarrow 0\]
is exact. Tensoring the second right-exact sequence by $M$, we get that
\[ M \otimes_\FI \Xi(\ell) \longrightarrow \Sigma M \otimes_\FI \Xi(\ell) \longrightarrow M\otimes_\FI \Xi(\ell+1) \longrightarrow 0\]
is exact. Consequently, 
\[  \coker(M \to \Sigma M)\otimes_\FI \Xi(\ell) \cong M \otimes_\FI \Xi(\ell+1).\]
This implies the assertion that $M \otimes \Xi(\ell+1) \cong 0$ for $\ell >d-1$.
\end{proof}

To prove flatness of $\Xi(0)$, we use a classical result of Isbell, which was originally of interest only as a counterexample.
\begin{thm}[\cite{Isbell74}] \label{thm:isbell}
The functor $\colim_{\OI} \colon \xmod{\OI} \to \Ab$ is exact.
\end{thm}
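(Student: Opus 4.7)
The plan is to establish exactness by a direct analysis of $\colim_{\OI}$ using a weak substitute for filteredness. The key combinatorial fact is the following \emph{pseudo-pushout} property of $\OI$: for any pair of morphisms $f \in \OI(n,k)$ and $g \in \OI(n,k')$ with common source $[n]$, there exist morphisms $p \in \OI(k, N)$ and $p' \in \OI(k', N)$ into a common target with $pf = p'g$. I would prove this by amalgamating $[k]$ and $[k']$ along the common image $f([n]) = g([n])$, taking $N = k + k' - n$, and defining $p, p'$ by interleaving the complementary elements of $[k]$ and $[k']$ in any consistent order. Note that $\OI$ is not filtered, since two parallel monotone injections cannot be coequalized; the pseudo-pushout is the best replacement available.

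Using this, I would define an auxiliary abelian group $\widetilde{\colim}_{\OI} M$ as $\bigsqcup_n M_n$ modulo the relation $m \sim m'$ if and only if there exist $g, g' \in \OI$ with common target satisfying $gm = g'm'$. The pseudo-pushout property ensures transitivity of $\sim$; addition descends (by first pushing two representatives into a common degree) to give an abelian group structure; and the natural maps $M_n \to \widetilde{\colim}_{\OI} M$ enjoy the universal property of the colimit, so $\widetilde{\colim}_{\OI} M \cong \colim_{\OI} M$. The central consequence is the following characterization: an element $m \in M_n$ represents zero in $\colim_{\OI} M$ if and only if there exists some $g \in \OI(n, N)$ with $gm = 0$ in $M_N$.

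With this characterization in hand, exactness on a short exact sequence $0 \to M' \to M \to M'' \to 0$ is a quick verification. For injectivity of $\colim M' \to \colim M$: if $m' \in M'_n$ maps to zero in $\colim M$, the characterization applied to $M$ produces some $g$ with $gm' = 0$ in $M_N$, and injectivity of $M' \hookrightarrow M$ then forces $gm' = 0$ already in $M'_N$, so $m'$ represents zero in $\colim M'$. Exactness in the middle and surjectivity of $\colim M \to \colim M''$ follow by analogous tracking of representatives and applications of the key lemma. The main obstacle is the verification around $\widetilde{\colim}_{\OI}$: showing $\sim$ is transitive and that addition is well-defined on equivalence classes both rely on iterated application of the pseudo-pushout property, and genuine care is required to check that different choices of common targets produce the same answer.
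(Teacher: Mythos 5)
Your pseudo-pushout property of $\OI$ is true, and it does make $\sim$ an equivalence relation; but what the resulting quotient computes is the colimit of the \emph{underlying sets} of $M$, not the colimit in $\Ab$. The forgetful functor from $\Ab$ to sets preserves only filtered colimits, and the missing half of filteredness --- coequalizing parallel pairs, which is impossible in $\OI$ because every morphism is monic --- is precisely what your argument needs. Concretely, both your ``central consequence'' and the well-definedness of addition on $\widetilde{\colim}_{\OI}M$ fail. Take $M=\Z\OI(1,-)$, and let $f\neq f'$ be the two morphisms $[1]\to[2]$; the action of $g\in\OI(2,N)$ on $M_2=\Z\OI(1,2)$ is postcomposition $h\mapsto g\circ h$. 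Since the free abelian group functor preserves colimits and the colimit of the set-valued representable $\OI(1,-)$ is a point, we have $\colim_{\OI}M\cong\Z$ with every basis element mapping to $1$; hence $m=f-f'\in M_2$ represents $0$ in the colimit. Yet for every $g\in\OI(2,N)$, injectivity of $g$ gives $g\circ f\neq g\circ f'$, so $g\cdot m=g\circ f-g\circ f'\neq 0$ in the free abelian group $\Z\OI(1,N)$: no pushforward of $m$ vanishes, contradicting your characterization of zero. The same example breaks addition on $\widetilde{\colim}$: one has $f\sim f'$ (push $f$ along $1\mapsto 2,\,2\mapsto 3$ and $f'$ along the inclusion $[2]\subset[3]$), but $f-f'\not\sim 0$, so adding $-f'$ to the equivalent elements $f$ and $f'$ yields inequivalent answers.

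Since every later step (in particular injectivity of $\colim M'\to\colim M$) rests on that characterization, the argument collapses, and it cannot be repaired by more care in the choices: if the characterization were true, exactness of $\colim_{\OI}$ would indeed be the routine filtered-colimit verification you describe, whereas Isbell's theorem is genuinely subtle --- the paper does not prove it but cites \cite{IM73, Isbell74}, and $\OI$ is of interest there exactly as an example of a non-filtered category whose $\Ab$-valued colimit functor is nonetheless exact. A correct self-contained proof must work with the actual presentation of the colimit, namely $\bigoplus_n M_n$ modulo the subgroup generated by elements of the form $m\cdot f-m$ (equivalently, prove flatness of the constant $\OI^{\op}$-module $\uZ$), rather than with the set-level quotient $\bigsqcup_n M_n/\sim$.
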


\begin{cor} \label{cor:Xiflat}
The $\FI^\op$-module $\Xi(0)$ is flat.
\end{cor}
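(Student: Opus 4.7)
The plan is to reduce flatness of $\Xi(0)$ to Isbell's exactness theorem (\autoref{thm:isbell}) via the Kan extension description from \autoref{prop:XiKan}. The key point is that $\Lambda(0)$ is the constant $\OI^{\op}$-module $\underline{\Z}$: indeed $\Lambda(0)_n = \Z$ for all $n$, and every $f \in \OI(n',n)$ satisfies $[0] = \emptyset \subseteq \im f$, so $f$ acts by the identity.

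First, I would use \autoref{prop:XiKan} to rewrite
\[ \Xi(0) \cong \Z\FI(-,-) \otimes_{\OI} \Lambda(0) = \Z\FI(-,-) \otimes_{\OI} \underline{\Z}. \]
For any $\FI$-module $M$, associativity of the functor tensor product together with the co-Yoneda identity $M \otimes_{\FI} \Z\FI(-,-) \cong i^*M$ (where $i \colon \OI \hookrightarrow \FI$ denotes the inclusion, and the isomorphism holds since $\Z\FI(c,-)$ represents evaluation at $c$) then yields a natural isomorphism
\[ M \otimes_{\FI} \Xi(0) \;\cong\; \bigl(M \otimes_{\FI} \Z\FI(-,-)\bigr) \otimes_{\OI} \underline{\Z} \;\cong\; i^*M \otimes_{\OI} \underline{\Z} \;\cong\; \colim_{\OI} \, i^*M. \]

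Second, I would observe that both functors in the composition $\colim_{\OI} \circ\, i^*$ are exact: the restriction $i^*$ is exact for trivial reasons (kernels and cokernels in $\xmod{\OI}$ and $\xmod{\FI}$ are both computed pointwise), and $\colim_{\OI}$ is exact by Isbell's theorem (\autoref{thm:isbell}). Therefore $- \otimes_{\FI} \Xi(0)$ is exact, which is precisely the statement that $\Xi(0)$ is a flat $\FI^{\op}$-module.

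The only subtle point — and the place where one has to be careful — is justifying the associativity isomorphism $M \otimes_{\FI}(\Z\FI(-,-) \otimes_{\OI} \underline{\Z}) \cong (M \otimes_{\FI}\Z\FI(-,-)) \otimes_{\OI} \underline{\Z}$, i.e., confirming that $\Z\FI(i(-),-)$ really behaves as an $(\OI,\FI^{\op})$-bimodule with commuting actions so that the tensor-hom formalism of \autoref{sec:tensor-hom} applies. Once this bookkeeping is settled, the corollary follows immediately; no further estimates or combinatorial input are needed, and flatness of $\Xi(\ell)$ for $\ell \geq 1$ will be handled separately by induction using \autoref{prop:gammases}.
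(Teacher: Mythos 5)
Your argument is correct and is essentially the paper's own proof: both reduce via \autoref{prop:XiKan} to the observation that $-\otimes_\FI \Xi(0)$ is restriction to $\OI$ followed by tensoring with the constant module $\Lambda(0)=\underline{\Z}$, i.e.\ $\colim_{\OI}$, which is exact by Isbell's theorem. The associativity/co-Yoneda bookkeeping you flag is exactly the content the paper leaves implicit, so no gap remains.
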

\begin{proof}
By \autoref{prop:XiKan}, tensoring with $\Xi(0)$ is the same as restricting to $\OI$ and tensoring with $\Lambda(0)$.  But $\Lambda(0) = \underline{\mathbb{Z}}$, and tensoring with this constant functor this is the same as taking a colimit.
\end{proof}
We now leverage  \autoref{cor:Xiflat} to prove that the $\FI^\op$-modules $\Xi(\ell)$ are flat for all $\ell \in \mathbb{N}$.  This method of proof has been employed in a similar context by Gan--Li \cite{coinduction}.

\begin{prop}\label{prop:Xiflat}
For every $\ell \in \N$, the $\FI^\op$-module $\Xi(\ell)$ is flat.
\end{prop}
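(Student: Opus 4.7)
The plan is to induct on $\ell$. The base case $\ell = 0$ is \autoref{cor:Xiflat}. For the inductive step, suppose $\Xi(\ell)$ is flat; since $\Xi(\ell+1) \cong \Omega\Xi(\ell)$ by \autoref{prop:Xi(ell)description}, it suffices to prove that $\Omega\Xi(\ell)$ is flat. I would apply \autoref{prop:gammases} with $M = \Xi(\ell)$, which furnishes a short exact sequence that splits, yielding
\[ \Z\FI(-,[1]\sqcup -) \otimes_\FI \Xi(\ell) \;\cong\; \Xi(\ell) \oplus \Omega\Xi(\ell).\]
Direct summands of flat modules are flat, so the whole proposition reduces to verifying that the middle term $\Z\FI(-,[1]\sqcup -)\otimes_\FI \Xi(\ell)$ is a flat $\FI^\op$-module.

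To establish this, for an arbitrary $\FI$-module $M$ I would rewrite the double tensor product using associativity and Yoneda's lemma:
\begin{align*}
M \otimes_\FI \bigl(\Z\FI(-,[1]\sqcup -) \otimes_\FI \Xi(\ell)\bigr) &\cong \bigl(M \otimes_\FI \Z\FI(-,[1]\sqcup -)\bigr) \otimes_\FI \Xi(\ell) \\
&\cong (\Sigma M) \otimes_\FI \Xi(\ell),
\end{align*}
where the first isomorphism treats $\Z\FI(-,[1]\sqcup -)$ as an $\FI^\op \times \FI$-bimodule, and the second uses that $M \otimes_\FI \Z\FI(-,T) \cong M_T$ naturally in $T \in \FI$, so that tensoring against $\Z\FI(-,[1]\sqcup -)$ recovers the shift $\Sigma M = M([1]\sqcup -)$. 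The shift $\Sigma$ is exact because it is precomposition with $[1]\sqcup -\colon \FI \to \FI$, and $-\otimes_\FI \Xi(\ell)$ is exact by the inductive hypothesis; the composite functor $M \mapsto (\Sigma M)\otimes_\FI \Xi(\ell)$ is therefore exact, which is what we wanted.

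The only subtle step is the associativity/Yoneda computation, where one must keep straight that $\Z\FI(-,[1]\sqcup -)$ is contravariant in its first slot and covariant in its second, and that $M$ is being tensored against the first slot while $\Xi(\ell)$ is tensored against the second. Once this variance bookkeeping is settled, no genuine obstacle remains and the argument is purely formal.
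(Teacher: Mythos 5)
Your proof is correct and follows essentially the same route as the paper: induct on $\ell$, use the split sequence of \autoref{prop:gammases} to realize $\Xi(\ell+1)\cong\Omega\Xi(\ell)$ as a summand of $\Z\FI(-,[1]\sqcup-)\otimes_\FI\Xi(\ell)$, and observe that tensoring with this module is the shift $\Sigma$ followed by $-\otimes_\FI\Xi(\ell)$, both exact. Your explicit associativity/co-Yoneda computation is just a more detailed spelling-out of the paper's one-line justification.
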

\begin{proof}
We employ induction on $\ell$.  The base case, $\ell = 0$, is \autoref{cor:Xiflat}.
By \autoref{prop:gammases}, $\Xi(\ell + 1)$ is a summand of $\Z\FI(-,[1]\sqcup -) \otimes_\FI \Xi(\ell)$.  However, tensoring with $\Z\FI(-,[1]\sqcup -) \otimes_\FI \Xi(\ell)$
 is the same as precomposing with $([1] \sqcup -)$ and then tensoring with $\Xi(\ell)$; both steps are exact---here we use the inductive hypothesis that $\Xi(\ell)$ is flat---and so the tensor product is flat as well.  This shows flatness, as $\Xi(\ell + 1)$ is a summand of a flat module.
\end{proof}


\section{Tails of $\FI$-modules via a new basis}\label{sec:CB}
\subsection{The Catalan Basis}
We define a new basis for the module $\Z \FI(k, n)$.  
\begin{defn} \label{defn:CBl}
Let
\[ \Catalan(\ell,n) = \{ c\colon [\ell] \to [n] \mid c(i) \ge 2i \text{ for all $i\in [\ell]$}\}.\]
Let $k,\ell,n\in\N$ and $c \in \Catalan(\ell, n)$. Define the set
\[ \CB^c_\ell(k,n) = \pi_0(\{\phi \in \Emb([k], \im c \sqcup \R) \mid  \im c \subseteq \im \phi\}),\]
where we have used the subspace topology of $\Emb$.  Let 
\begin{gather*} \CB_\ell(k,n) = \bigsqcup_{c\in \Catalan(\ell,n)} \CB_\ell^c(k,n),\\
\CB(k,n) = \bigsqcup_{\ell\in\N} \CB_\ell(k,n) ,\\
 \CB'(k,n) = \bigsqcup_{\ell=0}^{\min(k,n-k)} \CB_\ell(k,n).\end{gather*}
\end{defn}

\begin{lem} \label{lem:CB'}
$\CB'(k,n) \subseteq \CB(k,n)$ is an equality if $n\ge 2k-1$.
\end{lem}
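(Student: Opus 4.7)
My plan is to prove the nontrivial inclusion by showing $\CB_\ell(k,n) = \emptyset$ for every $\ell > \min(k,n-k)$; the reverse containment $\CB'(k,n) \subseteq \CB(k,n)$ holds by definition since $\CB'(k,n)$ simply restricts the disjoint union over $\ell$ appearing in $\CB(k,n)$.

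First I would isolate two independent vanishing mechanisms, reading them straight off \autoref{defn:CBl}. The cardinality mechanism: whenever $|\im c| > k$, the set $\CB_\ell^c(k,n) = \pi_0(\{\phi \in \Emb([k], \im c \sqcup \R) \mid \im c \subseteq \im \phi\})$ is empty, because $|\im \phi| = k$ cannot accommodate $\im c \subseteq \im \phi$. For this to kick in whenever $\ell > k$ I would invoke that each $c \in \Catalan(\ell,n)$ is injective (as is implicitly required for $\CB$ to match the rank of $\Z\FI(k,n)$), giving $|\im c| = \ell$. The Catalan-growth mechanism: the condition $c(\ell) \ge 2\ell$ combined with $c(\ell) \le n$ is infeasible when $n < 2\ell$, so $\Catalan(\ell,n)$ is already empty, and hence $\CB_\ell(k,n) = \emptyset$.

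Next I would run a quick case split on the value of $\min(k,n-k)$ under the hypothesis $n \ge 2k-1$. If $n \ge 2k$, then $\min(k,n-k) = k$, and every $\ell > k$ is killed by the cardinality mechanism. If $n = 2k-1$, then $\min(k,n-k) = k-1$, so I need to rule out both $\ell > k$ (again by the cardinality mechanism) and $\ell = k$; the latter falls to the Catalan-growth mechanism since $2\ell = 2k > 2k-1 = n$.

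The main obstacle I anticipate is the boundary case $n = 2k-1$: here $\min(k,n-k) = k-1 < k$, so the cardinality obstruction alone does not eliminate $\ell = k$, and one must invoke the Catalan growth condition $c(\ell) \ge 2\ell$ precisely at this boundary to close the argument. Away from this edge, everything reduces to elementary dimension counting.
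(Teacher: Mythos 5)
Your argument is correct and matches the paper's proof in essence: both show $\CB_\ell(k,n)=\emptyset$ for $\ell>\min(k,n-k)$ by combining the observation that $\im c\subseteq\im\phi$ forces $\ell\le k$ with the emptiness of $\Catalan(\ell,n)$ when $n<2\ell$, the latter handling exactly the boundary case $\ell=k$, $n=2k-1$. Your case split on $n\ge 2k$ versus $n=2k-1$ is just a reorganization of the paper's split on which of $k$, $n-k$ realizes the minimum.
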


\begin{proof}
We need to prove that $\CB_\ell(k,n) = \emptyset$ if $\ell \ge \min(k,n-k)+1$.  In \autoref{defn:CBl}, we require of any element in $\CB_\ell(k,n)$ that $\im c \subseteq \im \phi$, which implies $\ell \leq k$, and so $\CB_\ell(k,n)$ is empty if $\min(k,n-k) = k$.  Supposing instead that $\min(k,n-k) = n-k$, then $k \leq n-k+1 \leq \ell \leq k$, and so $\ell = k$ and $n = 2k-1$.  In this case, $\Catalan(\ell, n)$ is empty, and so $\CB_\ell(k,n)$ is again empty.
\end{proof}

Note that $\Z\CB_\ell^c(k,n) \cong H_0(X, A)$, where $X = \Emb([k], \im c \sqcup \mathbb{R})$ and $A$ is the subspace of embeddings $\phi \colon [k] \to\im c \sqcup \R$ with $\im c \not\subseteq \im \phi$, so that $\Z\CB_\ell^c(-,n)$ becomes an $\FI^\op$-module in a manner analogous to \autoref{defn:Xi(ell)}.  The resulting module is isomorphic to $\Xi(\ell)$, since the only difference is a relabeling $[\ell] \cong \im c$. Let us denote this isomorphism by
\begin{equation}\label{eq:CBXiiso}
\kappa_c \colon \Xi(\ell) \xrightarrow{\cong} \Z\CB^c_\ell(-,n).
\end{equation}
We will omit the subscript $c$ if it is clear from the context.

\begin{notation}\label{rem:CBdescription}
As in \autoref{rem:x}, the set $\CB_\ell^c(k,n)$, whose elements are homotopy classes, is in bijection with the set of representatives
\[\{ \phi\colon [k] \inject \im c \sqcup \{ x_1, \dots, x_{k-\ell}\}\}.\]
If we write such a $\phi$ in one-line notation, it denotes the corresponding element in $\CB_\ell^c(k,n)$.
\end{notation}

\begin{prop} \label{prop:count}
For all $k, n \in \mathbb{N}$, we have \[|\FI(k,n)| = |\CB'(k,n)|.\] In particular, $|\FI(k,n)| = |\CB(k,n)|$ if $n\ge 2k-1$.
\end{prop}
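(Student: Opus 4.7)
The plan is to count both sides. Since $|\FI(k,n)| = k!\binom{n}{k}$ (with the convention $\binom{n}{k}=0$ when $k>n$), it suffices to show $|\CB'(k,n)| = k!\binom{n}{k}$. The bulk of the work goes into computing $|\Catalan(\ell,n)|$; the rest is routine.

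First I would compute $|\CB_\ell^c(k,n)|$ for a fixed $c \in \Catalan(\ell,n)$. By \autoref{rem:CBdescription}, each element of $\CB_\ell^c(k,n)$ has a unique representative $\phi\colon [k]\hookrightarrow \im c\sqcup\{x_1,\dots,x_{k-\ell}\}$. Since $c$ is strictly increasing (so that $|\im c|=\ell$), source and target are both $k$-element sets, and every such injection is in fact a bijection. Hence $|\CB_\ell^c(k,n)| = k!$, and summing over $c$ yields
\[ |\CB'(k,n)| = k!\sum_{\ell=0}^{\min(k,n-k)} |\Catalan(\ell,n)|.\]

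The heart of the argument is the classical ballot identity $|\Catalan(\ell,n)| = \binom{n}{\ell} - \binom{n}{\ell-1}$ (with $\binom{n}{-1} := 0$). I would establish this by the reflection principle: associate to each strictly increasing $c\colon [\ell]\to [n]$ the lattice path from $(0,0)$ to $(\ell,n-\ell)$ that takes an up-step at position $c(i)$ and a right-step elsewhere, so that the Catalan condition $c(i)\ge 2i$ translates into the requirement that the path stay within the region $u\le r$. Among the $\binom{n}{\ell}$ total such paths, the ``bad'' ones first touch the line $u=r+1$; reflecting the initial segment of each bad path about this line provides a bijection with the $\binom{n}{\ell-1}$ paths from $(1,-1)$ to $(\ell,n-\ell)$. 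The main sum then telescopes,
\[ \sum_{\ell=0}^{\min(k,n-k)}\Big(\binom{n}{\ell}-\binom{n}{\ell-1}\Big) = \binom{n}{\min(k,n-k)} = \binom{n}{k},\]
using $\binom{n}{n-k}=\binom{n}{k}$ (with the empty sum giving zero when $k>n$).

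Multiplying by $k!$ yields $|\CB'(k,n)| = k!\binom{n}{k} = |\FI(k,n)|$, establishing the first assertion. The second assertion, $|\FI(k,n)|=|\CB(k,n)|$ when $n\ge 2k-1$, follows immediately from \autoref{lem:CB'}. The hardest step will be executing the reflection bijection carefully; the rest is elementary counting and telescoping.
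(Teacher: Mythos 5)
Your proof is correct and takes essentially the same route as the paper: count $|\CB_\ell(k,n)| = k!\,|\Catalan(\ell,n)|$ via the canonical representatives of \autoref{rem:CBdescription}, apply $|\Catalan(\ell,n)| = \binom{n}{\ell} - \binom{n}{\ell-1}$, telescope to $k!\binom{n}{k}$, and quote \autoref{lem:CB'} for the second claim; the only difference is that you supply a reflection-principle proof of the ballot identity, which the paper simply asserts as classical. Your reading of $\Catalan(\ell,n)$ as strictly increasing maps (so $|\im c| = \ell$) is the intended one and is needed for the identity, even though \autoref{defn:CBl} does not state monotonicity explicitly.
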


\begin{proof}
By the description in \autoref{rem:CBdescription}, $|\CB_\ell(k,n)| = k! \cdot |\Catalan(\ell,n)|$ for all $\ell \le k$. Because $|\Catalan(\ell,n)| = \binom{n}{\ell} - \binom{n}{\ell -1}$ for $\ell \le m := \min(k,n-k)$,
\begin{multline*} \sum_{\ell = 0}^{\min(k,n-k)}|\CB_\ell(k,n)| = k!  \cdot \left( \binom{n}{0} + \left[ \binom{n}{1} - \binom{n}{0} \right] + \cdots + \left[ \binom{n}{m} - \binom{n}{m-1} \right] \right)\\
 = k! \cdot \binom{n}{m}  = \; k! \cdot \binom{n}{k} =  |\FI(k,n)|.\end{multline*}
 The second assertion follows from \autoref{lem:CB'}.
\end{proof}

\subsection{A perfect pairing} \label{sec:pairing}
\begin{defn}
Let $f \in \FI(k,n)$, and let $\phi\colon [k] \inject \im c \sqcup \R$ for some $c\in \Catalan(\ell,n)$ be a representative of $\cb \in \CB(k,n)$.  We say  $f$  \emph{matches}  $\cb$ if $f(i) = \phi(i)$ when $\phi(i) \in \im c$ and $f(i) < f(j) \iff \phi(i) <\phi(j)$ when $\phi(i),\phi(j) \in \R$.

Define a bilinear form
\[\langle -,-\rangle \colon \mathbb{Z} \FI(k,n) \otimes \mathbb{Z}{\CB}(k, n) \to \mathbb{Z} \]
by its values on pairs of basis vectors
\[
\langle f, \cb \rangle = \begin{cases}
1 & \text{if $f$ matches $\cb$} \\
0 & \text{otherwise. }
\end{cases}
\]
\end{defn}

\begin{rem}
The idea of this pairing is that an element $\cb\in \CB_l^c(k,n)$ is a template for injections $[k] \to [n]$. In this template, the values of $\cb$ in $\im c$ must match exactly, but for the values in $\R$, only the order has to match. Using the bijection from \autoref{rem:CBdescription}, suppose $\cb = x_152x_2 \in  \CB_2(4,5)$.  
Since the $x_i$ can be replaced by any numbers with $x_1<x_2 $, the following injections match $\cb$: $1523, 1524, 3524 \in \FI(4,5)$.
\end{rem}

\begin{prop}\label{prop:X}
Fix $n\in \N$. There is a homomorphism of $\FI^\op$-modules
\[\X\colon \Z\FI(-,n) \longrightarrow \Z\CB(-,n)\]
sending $f\in \FI(k,n)$ to 
\[ \sum_{\cb \in \CB(k,n)} \langle f,\cb\rangle\cdot \cb.\]
\end{prop}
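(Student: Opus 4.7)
The plan is to verify that $\X$ commutes with the $\FI^\op$-action, that is, to check the identity $\X_{k'}(f \circ g) = g^* \X_k(f)$ for every $g \in \FI(k', k)$ and $f \in \FI(k, n)$. Both sides lie in $\Z\CB(k', n)$, which is a finite-rank free abelian group, so I will compare coefficients on each basis vector $\cb' \in \CB(k', n)$. The left coefficient is $\langle f \circ g, \cb' \rangle \in \{0, 1\}$, equal to $1$ exactly when $f \circ g$ matches $\cb'$. The right coefficient is the number of $\cb \in \CB(k, n)$ for which $f$ matches $\cb$ and $g^* \cb = \cb'$, where $g^*$ acts through the homological identification $\Z\CB_\ell^c(-, n) \cong H_0(X, A)$: a representative $\phi\colon [k] \to \im c \sqcup \R$ of $\cb$ is sent to the class of $\phi \circ g$ when $\im c \subseteq \im(\phi \circ g)$, and to zero otherwise.

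Naturality then reduces to the combinatorial claim that, for each fixed $\cb'$, there is exactly one $\cb \in \CB(k, n)$ matched by $f$ with $g^* \cb = \cb'$ when $f \circ g$ matches $\cb'$, and none otherwise. To prove this, fix a representative $\phi' \colon [k'] \to \im c' \sqcup \R$ of $\cb' \in \CB_\ell^{c'}(k', n)$ and set $T' = (\phi')^{-1}(\im c')$. Any $\cb$ with $g^* \cb = \cb'$ must lie in $\CB_\ell^{c'}(k, n)$, its preimage $T := \phi^{-1}(\im c')$ must equal $g(T')$, and one forces $\phi|_T = \phi'|_{T'} \circ (g|_{T'})^{-1}$, leaving only the isotopy class of $\phi|_{[k] \setminus T}$ free, subject to a single order constraint transferred via $g$ from $\phi'$. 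The condition that $f$ match $\cb$ instead forces $\phi|_T = f|_T$ and pins the order of $\phi|_{[k] \setminus T}$ to that of $f|_{[k] \setminus T}$. A direct check shows that the two prescriptions on $T$ agree if and only if $f(g(i)) = \phi'(i)$ for all $i \in T'$, and the two order constraints are jointly realizable if and only if $f \circ g$ respects the order of $\phi'$ on $[k'] \setminus T'$. Together these are precisely the conditions for $f \circ g$ to match $\cb'$, and when they hold, $\phi$ is determined up to isotopy, giving a unique $\cb$.

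I expect the main obstacle to be bookkeeping, not depth: one must simultaneously track how $g$ bijects $T'$ onto $T$, how the ``matches'' relation splits into a piece on the $\im c'$-component and a piece on the $\R$-component, and how the homological presentation of $\CB$ encodes the vanishing of $g^*\cb$ whenever $\im c' \not\subseteq \im(\phi \circ g)$. Once the bijection between matched $\cb$'s over $\cb'$ and the event ``$f \circ g$ matches $\cb'$'' is pinned down, comparing coefficients gives the naturality that makes $\X$ a map of $\FI^\op$-modules.
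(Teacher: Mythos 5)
Your proposal is correct, and it takes a more self-contained route than the paper. The paper's proof reduces naturality to the set-level claim that the elements of $\CB(k',n)$ matching $f\circ g$ are exactly the nonzero elements $g\cb$ with $\cb$ matching $f$, and then points to \autoref{prop:casework}, \eqref{prop:casework3} together with the $\SG_k$-equivariance of the matching relation; it leaves implicit the multiplicity point that distinct $\cb$'s matching $f$ cannot collapse to the same nonzero $g\cb$ (this holds because $f$ matches at most one element of each summand $\CB^c_\ell(k,n)$, namely, when $\im c\subseteq\im f$, the class of the $\phi$ with $\phi|_{f^{-1}(\im c)}=f|_{f^{-1}(\im c)}$ and with the order of $f$ on the complement), which is what one needs to upgrade the set equality to an equality of coefficients. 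You instead fix a target basis vector $\cb'$ and prove the sharper fiberwise statement: there is exactly one $\cb$ matched by $f$ with $g\cb=\cb'$ when $f\circ g$ matches $\cb'$, and none otherwise. Your identification $T=g(T')$, the forced value $\phi|_T=\phi'|_{T'}\circ(g|_{T'})^{-1}$ versus $\phi|_T=f|_T$, and the compatibility of the two order constraints do recover precisely the two clauses of ``$f\circ g$ matches $\cb'$,'' so the count is right. What each approach buys: the paper's version is shorter and recycles the casework lemma it needs anyway for \autoref{thm:perfect} (though that lemma is phrased for stabilization in $n$, so the citation is terse), while yours avoids that machinery entirely and makes the $0/1$ coefficient bookkeeping explicit. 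The only details still to record in a full write-up are routine: the $\phi$ you pin down is genuinely an embedding hitting all of $\im c'$ (so $g\cb\neq 0$), and the right-hand coefficient is literally the cardinality of your fiber since $\langle f,\cb\rangle\in\{0,1\}$ and each nonzero $g\cb$ is a single basis vector.
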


\begin{proof}
Fix $f\in \FI(k,n)$ and $g\in \FI(k',k)$. We want to show that $\X(gf) = g\X(f)$. Recall that $\cb \in \CB(k,n)$ is sent to an element $g\cb \in \CB(k',n)$ by precomposing. Let $\Pi_f$ be the set of $\cb \in \CB(k,n)$ that match $f$. It is enough to show that 
\[ \{ g\cb \mid \cb \in \Pi_f \}\]
is the set of elements in $\CB(k',n)$ that match $gf$. This follows from the soon-to-be-given \autoref{prop:casework}, \eqref{prop:casework3} and the easy-to-check fact that $\sigma f$ matches $\sigma \cb$ if and only if $f$ matches $\cb$ for all $\sigma \in \SG_k$.
\end{proof}

We intend to prove the following theorem.

\begin{thm} \label{thm:perfect}
The restricted paring $\langle -,-\rangle\colon \Z\FI(k,n) \otimes \Z\CB'(k,n) \to \Z$ is perfect. In particular, the pairing $\langle -,-\rangle\colon \Z\FI(k,n) \otimes \Z\CB(k,n) \to \Z$ is perfect if $n\ge 2k-1$.
\end{thm}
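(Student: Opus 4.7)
The plan is to show $|\det M| = 1$, where $M$ is the matrix of the pairing in the standard bases; this suffices for perfectness because by \autoref{prop:count} we have $|\FI(k,n)| = |\CB'(k,n)|$, so $M$ is square. The second claim, that $\CB'$ may be replaced by $\CB$ when $n \geq 2k-1$, is then immediate from \autoref{lem:CB'}.

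My first step is a block decomposition. I group the rows of $M$ by $S = \im f \in \binom{[n]}{k}$ and the columns by the Catalan sequence $c \in \bigsqcup_{\ell = 0}^{\min(k,n-k)} \Catalan(\ell, n)$. A telescoping sum $\sum_\ell (\binom{n}{\ell} - \binom{n}{\ell-1}) = \binom{n}{\min(k,n-k)} = \binom{n}{k}$ shows both sides have $\binom{n}{k}$ groups, each of size $k!$. The key observation is that the block $M_{S,c}$ vanishes when $\im c \not\subseteq S$ and is a $k! \times k!$ permutation matrix otherwise. Indeed, if $\im c \subseteq S$, the matching condition uniquely determines $f$ from $\cb$: we must have $f(i) = \phi(i)$ on $\phi^{-1}(\im c)$, while $f$ restricted to $\phi^{-1}(\R)$ is forced to be the unique order-preserving bijection to $S \setminus \im c$. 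This gives a bijection $\CB_\ell^c(k,n) \leftrightarrow \{f : \im f = S\}$, and $M_{S,c}$ records it as a permutation matrix.

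To conclude, I exploit the $\SG_k$-equivariance of the pairing (with $\SG_k$ acting on $[k]$ by precomposition): both $\Z\FI(k,n)$ and $\Z\CB'(k,n)$ are isomorphic as $\SG_k$-modules to $\binom{n}{k}$ copies of the regular representation $\Z\SG_k$. Rationally, $\det M$ factors as $\prod_V (\det \tilde M_V)^{\dim V}$ indexed over irreducible $\SG_k$-representations $V$, where $\tilde M_V$ is the $\binom{n}{k} \times \binom{n}{k}$ integer matrix whose $(S, c)$-entry records the action on $V$ of the block permutation $M_{S,c}$. Thus it suffices to show $|\det \tilde M_V| = 1$ for each $V$. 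I prove this by row-reduction, exploiting the special column $c = \emptyset$: its entries are all the identity permutation, so its $\tilde M_V$-entries are all $1$. Subtracting an appropriate multiple of a single pivot row clears the $\emptyset$-column in the other rows, reducing to a smaller matrix indexed by $c \neq \emptyset$; recursion on the Catalan structure, driven by the reflection-principle identity $|\Catalan(\ell, n)| = \binom{n}{\ell} - \binom{n}{\ell-1}$, then yields a triangular form with $\pm 1$ diagonal entries.

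The main obstacle is rigorously organizing the recursion in the final step: the non-trivial irreducibles $V$ require careful tracking of the interaction between the block permutations' cycle structures and the Catalan ordering, and one must choose the row pivots compatibly across the filtration by $\ell$. The trivial-representation case, where $\tilde M_V$ is the incidence matrix of ``$\im c \subseteq S$'', already illustrates the mechanism by direct Gaussian elimination and suggests the natural ordering by $\ell$ that generalizes to all $V$.
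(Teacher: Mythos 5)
Your reduction of the theorem to a determinant computation is sound as far as it goes: by \autoref{prop:count} the matrix $M$ of the pairing is square, and your block observation is correct --- grouping rows by $S=\im f$ and columns by $c$, equivariance under $\SG_k$ makes $M$ a $\binom{n}{k}\times\binom{n}{k}$ matrix over $\Z\SG_k$ whose $(S,c)$-entry is a single permutation when $\im c\subseteq S$ and zero otherwise (the unique matching $f$ with prescribed image, as you argue). Reducing perfectness to $|\det M|=1$, and in principle factoring the determinant over the isotypic pieces of the regular representation, is legitimate (modulo the small imprecision that for $\dim V>1$ your $\tilde M_V$ has matrix entries, so it has size $\binom{n}{k}\dim V$, and the factorization is $\det M=\prod_V\det(\rho_V(M))^{\dim V}$).

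The genuine gap is the final step, which is the entire content of the theorem: you never actually prove that any of these determinants is $\pm 1$. The proposed mechanism --- pivot on the column $c=\emptyset$, then ``recursion on the Catalan structure \dots yields a triangular form with $\pm 1$ diagonal entries'' --- is not carried out, and there is no evident reason it works. Even in the trivial-representation case, $\tilde M_V$ is the containment incidence matrix between Catalan-type $\ell$-subsets ($c(i)\ge 2i$) and arbitrary $k$-subsets of $[n]$; its unimodularity is a nontrivial combinatorial assertion, and clearing one column with a single pivot row does not visibly reduce the remaining matrix to one of the same shape with smaller parameters, since each Catalan column meets $\binom{n-\ell}{k-\ell}$ rows and no ordering by $\ell$ makes the matrix block triangular. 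For nontrivial $V$ you yourself note that the interaction of the block permutations with the elimination is unresolved. By contrast, the paper proves the dual statement directly: by induction on $(k,n)$ it shows the functionals $\langle -,\cb\rangle$, $\cb\in\CB'(k,n)$, span all integer-valued functions on $\FI(k,n)$, using the decomposition \eqref{eq:decomposition} of $\FI(k+1,n+1)$ via $\varepsilon$ and $\tau_j$ together with the operations $e,s,t_j,r$ and the matching casework of \autoref{prop:casework} and \autoref{prop:r} to exhibit the expansion \eqref{eq:claim} explicitly. Some argument of that kind (an explicit expansion, or an actual unimodular triangularization) is still needed; as written, your proof stops exactly where the difficulty begins.
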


We prepare some notation to aid the proof of \autoref{thm:perfect}.   
Let
\[ \eps \colon \FI(k,n) \longrightarrow \FI(k,n+1) \]
be postcomposition by $[n] \subset [n+1]$. Similarly, let
\[ e \colon \CB_\ell(k,n) \longrightarrow \CB_\ell(k,n+1) \]
be the map that sends the $\sqcup$-summand indexed by $c$ to the one indexed by the composition of $c$ and $[n]\subset [n+1]$.  
For example, $e(x_125x_2) =x_125x_2$.  Let
\[ s\colon \CB_\ell(k,n) \longrightarrow \CB_{\ell+1}(k,n+1)\]
be the map that replaces $x_{k-\ell}$ by $(n+1)$ when $\ell \le \min(k-1, n-k)$. (Otherwise the map is not defined.) For example, $s(x_125x_2) = x_1257$, if $n=6$. Let
\[ \tau_j \colon \FI(k,n) \longrightarrow \FI(k+1,n+1)\]
be the map that sends $f\in \FI(k,n)$ to
\[ \tau_j(f) (i) = \begin{cases} f(i) &\text{if $i<j$,}\\n+1& \text{if $i =j$,}\\f(i-1)&\text{if $i>j$.}\end{cases}\]
Similarly, let
\[t_j \colon \CB_\ell(k,n) \longrightarrow \CB_\ell(k+1,n)\]
be the map that sends $\cb \in \CB_\ell(k,n)$ to
\[ \tau_j(\cb) (i) = \begin{cases} \cb(i) &\text{if $i<j$,}\\x_{k-\ell +1}& \text{if $i =j$,}\\ \cb(i-1)&\text{if $i>j$.}\end{cases}\]
For example, $\tau_4(x_125x_2) = x_125x_3x_2$.

\begin{prop} \label{prop:casework}
We have the following elementary properties of the previously defined operations $e,s,t_j,\varepsilon, \tau_j$:
\begin{enumerate}
\item $\tau_jf$ matches $e\cb \Longleftrightarrow \tau_j f$ matches $s\cb$ \label{prop:casework1}
\item $\varepsilon f$ never matches $s\cb$
\item $\varepsilon f$ matches $e\cb \Longleftrightarrow f$ matches $\cb$ \label{prop:casework3}
\item $\tau_jf$ matches $et_{j'}\cb \Longleftrightarrow$ $(f \mbox{ matches }\cb)$ and $(j=j')$.
\end{enumerate}
\end{prop}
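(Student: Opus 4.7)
The plan is to prove each of the four assertions by a direct unpacking of the matching relation together with the definitions of $\varepsilon, \tau_j, e, s, t_{j'}$, comparing conditions position by position. Recall that $f$ matches $\cb$ precisely when $f(i)=\cb(i)$ at those $i$ with $\cb(i)\in\im c$, and when on the remaining (real-valued) positions the relative order of the values of $f$ agrees with that of the values of $\cb$. For each of (1)--(4), the strategy is to identify how the two operations (one applied to $f$, one to $\cb$) rearrange the integer-valued and real-valued slots, and to verify that the resulting matching conditions either agree literally or differ by a visible obstruction.

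First I would dispatch (3): both $\varepsilon$ and $e$ only enlarge the ambient target from $[n]$ to $[n+1]$, leaving the values of $f$ and the chosen representative of $\cb$ unchanged, and $\im c$ is unaltered. The matching condition is therefore literally the same on both sides. For (2), the operator $s$ replaces the largest real value $x_{k-\ell}$ in the representative of $\cb$ by the integer $n+1$, and adjoins $n+1$ to the new image $\im c'$. Any injection matching $s\cb$ must therefore take the value $n+1$ at that distinguished slot, but $\varepsilon f$ has image in $[n]$, so matching is impossible.

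For (1), the idea is to compare $e\cb$ and $s\cb$ directly: they coincide at every position except the one carrying $x_{k-\ell}$, which $e\cb$ keeps as a real value and $s\cb$ promotes to the integer $n+1\in \im c'$. Now $\tau_j f$ takes the value $n+1$ at position $j$ and values in $[n]$ elsewhere. If $\tau_j f$ matches $e\cb$, the distinguished position (carrying $x_{k-\ell}$) sits at the top of the relative order on real-valued slots, and since the unique maximal integer value of $\tau_j f$ is $n+1$ at position $j$, this slot must be $j$; but then $(\tau_j f)(j) = n+1$ is exactly what matching $s\cb$ demands. The converse reads in reverse, and the remaining positions impose identical constraints. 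For (4), $t_{j'}$ inserts a new real value $x_{k-\ell+1}$ (larger than all previously used $x_i$) at position $j'$, and $e$ only enlarges the target, so the top real slot of $et_{j'}\cb$ is at $j'$. Matching $\tau_j f$ forces the top integer slot of $\tau_j f$ (position $j$, value $n+1$) to align with that, whence $j=j'$; deleting this common slot from both sides reduces the condition to $f$ matching $\cb$.

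The main obstacle is purely notational: one must track carefully how $s$, $t_{j'}$ and $\tau_j$ re-index the real-valued slots and how the newly created extremal value (either the integer $n+1$ or the real $x_{k-\ell+1}$) takes over the role of ``top of the order''. Once that bookkeeping is pinned down, each equivalence reduces to a short position-by-position verification that compares the equality conditions on the integer-valued slots and the relative-order conditions on the real-valued slots.
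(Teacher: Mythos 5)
Your proposal is correct and follows essentially the same route as the paper: a direct position-by-position unpacking of the matching relation, using that $e\cb$ and $s\cb$ differ only at the slot carrying $x_{k-\ell}$, that $\varepsilon f$ omits the value $n+1$, and that the extremal new entry ($n+1$ in $\tau_j f$, resp.\ $x_{k-\ell+1}$ in $t_{j'}\cb$) pins the distinguished slot to position $j$, forcing $j=j'$ in (4). The paper's own proof is just as terse, so no essential step is missing.
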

\begin{proof}
\ 
\begin{enumerate}
\item If $f\in \FI(k-1,n-1)$ and $\cb \in \CB_\ell(k,n-1)$, then $e\cb$ and $s\cb$ coincide in all positions except $x_{k-\ell}$ is replaced by $n$. In order for $\tau_j f$ to match $e\cb$ or $s\cb$, that position must be the $j$th in both cases. 
\item Let $f\in \FI(k,n-1)$ and $\cb \in \CB_\ell(k,n-1)$. Then the image of $\eps f$ does not contain $n$, but it would have to in order to match $s \cb$.
\item Clear.
\item Let $f\in \FI(k-1,n-1)$ and $\cb \in \CB_\ell(k-1,n-1)$. ``$\Longleftarrow$'' is clear because the ``new'' $x_{k-\ell}$ in $j$th position of $t_j \cb$ matches the ``new'' $n$ in the $j$th position of $\tau_j f$. For ``$\Longrightarrow$'', observe that $n$ is not in the image of $et_{j'}\cb$, so the largest $x$ must be in the $j$th position. Thus $j=j'$. For the other positions to match, we must have that $f$ matches $\cb$. \qedhere
\end{enumerate}

\end{proof}
Define $r \colon \mathbb{Z}\CB(k,n) \to \mathbb{Z}\CB(k,n+1)$ by sending $\cb \in \CB_\ell(k,n)$ to
\[ r \cb =
\begin{cases}
s \cb & \text{if $l \leq k-1$} \\
0 & \text{otherwise.}
\end{cases}\]

\begin{prop} \label{prop:r}
$\langle \tau_j f, e \cb \rangle = \langle \tau_j f, r \cb \rangle$ for all $f \in \FI(k,n)$ and $\cb \in \CB'(k+1,n)$.
\end{prop}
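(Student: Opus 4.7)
The plan is to split into cases based on $\ell$, where $\cb \in \CB_\ell(k+1,n)$. Since $\cb \in \CB'(k+1,n)$, we have $\ell \leq \min(k+1, n-k-1)$, which leaves two subcases: either $\ell \leq k$, or $\ell = k+1$. These will be handled separately.

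In the main case $\ell \leq k$, the map $s$ is defined on $\cb$ (the requirement $\ell \leq \min(k, n-k-1)$ is satisfied), so $r\cb = s\cb$. Both $e\cb$ and $s\cb$ lie in $\CB(k+1, n+1)$, and $\tau_j f \in \FI(k+1, n+1)$. I would then simply invoke \autoref{prop:casework}\eqref{prop:casework1}, which says $\tau_j f$ matches $e\cb$ if and only if $\tau_j f$ matches $s\cb$; hence
\[\langle \tau_j f, e\cb\rangle = \langle \tau_j f, s\cb\rangle = \langle \tau_j f, r\cb\rangle.\]

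The remaining case $\ell = k+1$ is where $r\cb = 0$ by definition, so the right-hand side vanishes and I must verify that $\langle \tau_j f, e\cb\rangle = 0$ as well. Here is where one must look at the combinatorics of $\CB_{k+1}(k+1,n)$ directly: by \autoref{defn:CBl}, an element $\cb \in \CB_{k+1}^c(k+1, n)$ is represented by an embedding $\phi \colon [k+1] \hookrightarrow \im c \sqcup \R$ with $\im c \subseteq \im\phi$; but $|\im c| = k+1 = |[k+1]|$ forces $\phi$ to be a bijection onto $\im c$, so the one-line representative of $\cb$ (and therefore of $e\cb$) contains no $x_i$ entries. Consequently every value $e\cb(i)$ lies in $[n]$. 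Meanwhile $(\tau_j f)(j) = n+1$, which does not belong to $[n]$, so $\tau_j f$ cannot match $e\cb$ at position $j$; hence the pairing vanishes.

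The only ``obstacle'' is the second case, and it is small: one needs to recognize that $\CB_{k+1}(k+1, n)$ consists of embeddings with no real-valued entries, which is precisely what makes $r$ kill $\cb$ and what also forbids $\tau_j f$ from matching $e\cb$. Other than that, the argument is a direct citation of the preceding casework proposition. No subtle cancellations or signs arise because the pairing takes values in $\{0,1\}$ on basis elements.
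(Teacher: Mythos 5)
Your proof is correct and follows essentially the same route as the paper: for $\ell \le k$ one has $r\cb = s\cb$ and the claim is the matching equivalence of \autoref{prop:casework}, while for $\ell = k+1$ the class $e\cb$ has no real-valued entries and omits $n+1$, so the value $n+1 = (\tau_j f)(j)$ rules out any match. (Your citation of \autoref{prop:casework}\eqref{prop:casework1} is in fact the correct item for the first case.)
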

\begin{proof}
If $\cb \in \CB_\ell(k+1, n)$ with $\ell \leq k$, then $r\cb = s\cb$, and so \autoref{prop:casework}, \eqref{prop:casework3} gives the result.  In the remaining case, $\ell = k + 1$ and $\langle \tau_j f, r \cb \rangle = 0$.  Then there are no $x$'s in the image of $e \cb$, and moreover, $n+1$ does not appear either. Therefore the $n+1$ in $\tau_j f$ doesn't match any position in $e\cb$ and so $\langle \tau_j f, e\cb\rangle = 0$.
\end{proof}
 
The functions $\varepsilon$ and $\tau_1, \ldots, \tau_{k+1}$ induce a bijection
\begin{equation}\label{eq:decomposition}
\FI(k+1,n) \sqcup \bigsqcup_{j\in [k+1]} \FI(k,n) \longrightarrow \FI(k+1,n+1)
\end{equation}
sorting injections $g \in \FI(k+1,n+1)$ according to the preimage $g^{-1}(\{n+1\})$, which is empty, or a singleton $\{j\}$ for some $j \in [k+1]$.

\begin{proof}[Proof of  \autoref{thm:perfect}]
We show by induction on $(k,n)$ that the functions  
\[
\left\{ \; \langle -, \cb \rangle \colon \FI(k,n) \to \mathbb{Z} \; | \; \cb \in \CB'(k,n) \right\}
\]
span the full space of functions $\FI(k,n) \to \mathbb{Z}$;  this proves that the restricted pairing $\langle -, -\rangle$ is perfect by  \autoref{prop:count}.

There are two base cases: $k=0$ and $n=0$.  When $k=0$, the sets $\FI(0,n)$ and $\CB(0,n)$ are singletons.  Note that those two single elements match.  This is a perfect pairing.  On the other hand, if $n=0$ and $k>0$, then the sets $\FI(k,0)$ and $\CB(k,0)$ are empty, so $\langle -, - \rangle$ is perfect vacuously.

We proceed to the inductive step.  Let $\lambda \colon \FI(k+1, n+1) \to \mathbb{Z}$ be an arbitrary function.  By the inductive hypothesis, for each $j \in [k+1]$, the function $f \mapsto \lambda (\tau_j f)$ has an expansion in the basis $\{\;\langle -, \omega \rangle \; | \; \omega \in \CB'(k,n) \}$.  In other words, there exist integers $\alpha_j^{\omega}$ so that, for all $f \in \FI(k,n)$,
\[
\lambda (\tau_j f) = \sum_{\omega \in \CB'(k, n)} \alpha^{\omega}_j \cdot \langle f, \omega \rangle.
\]
Similarly, there exist integers $\beta^{\cb}$ so that, for all $f \in \FI(k+1,n)$,
\[
\lambda(\varepsilon f) -  \left(\sum_{\substack{1 \leq j \leq k+1  \\ \omega \in \CB'(k, n)}} \alpha^{\omega}_j \cdot \langle \varepsilon f, t_j \omega \rangle \right) = \sum_{\rho \in \CB'(k+1, n)} \beta^{\rho} \cdot \langle f, \rho \rangle.
\]
Set $m = \min \{ k, n-k \}$.  We claim
\begin{equation} \label{eq:claim}
\lambda = \left( \sum_{j=1}^{k+1} \sum_{\omega} \alpha_j^{\omega} \langle -, t_j \omega \rangle \right) + \left( \sum_{\rho}  \beta^{\rho} \langle -, e \rho \rangle \right) - \left( \sum_{l=0}^{m} \sum_{\nu}   \beta^{\nu} \langle -, s \nu \rangle \right),
\end{equation}
with sums ranging over $\omega \in \CB'(k,n)$, $\rho \in \CB'(k+1, n)$, and $\nu \in \CB'_\ell(k+1,n)$.  We prove the claim \eqref{eq:claim} using the decomposition \eqref{eq:decomposition}, showing equality after evaluation at injections in the image of each map $\varepsilon$ and $\tau_j$.

Suppose $f \in \FI(k+1,n)$.  Compute
\begin{align*}
\lambda(\varepsilon f) &= \left(\lambda(\varepsilon f) - \sum_{\rho } \beta^{\rho} \langle f, \rho \rangle \right) + \left( \sum_{\rho}  \beta^{\rho} \langle  f, \rho \rangle \right) \\
&= \left(\lambda(\varepsilon f) - \sum_{\rho } \beta^{\rho} \langle f, \rho \rangle \right) + \left( \sum_{\rho}  \beta^{\rho} \langle \varepsilon f, e \rho \rangle \right) - \left( \sum_{l=0}^{z} \sum_{\nu}   \beta^{\nu} \langle \varepsilon f, s \nu \rangle \right) \\
&= \left( \sum_{j=1}^{k+1} \sum_{\omega} \alpha_j^{\omega} \langle \varepsilon f, t_j \omega \rangle \right) + \left( \sum_{\rho}  \beta^{\rho} \langle \varepsilon f, e \rho \rangle \right) - \left( \sum_{l=0}^{z} \sum_{\nu}   \beta^{\nu} \langle \varepsilon f, s \nu \rangle \right),
\end{align*}
where we have used that $\langle f, \rho \rangle = \langle \varepsilon f, e \rho \rangle$ and $\langle \varepsilon f, s \nu \rangle = 0$ by \autoref{prop:casework}. 

Now suppose $f \in \FI(k,n)$ and $j' \in [k+1]$.  Compute
\begin{align*}
\lambda(\tau_{j'} f) &=  \left( \sum_{\omega} \alpha_{j'}^{\omega} \langle f,  \omega \rangle \right) \\
&= \left( \sum_{\omega} \alpha_{j'}^{\omega} \langle f,  \omega \rangle \right) + \left( \sum_{\rho}  \beta^{\rho} \left( \langle \tau_{j'} f, e \rho \rangle - \langle \tau_{j'} f, r \rho \rangle\right) \right) \\
&= \left( \sum_{\omega} \alpha_{j'}^{\omega} \langle f,  \omega \rangle \right) + \left( \sum_{\rho}  \beta^{\rho} \langle \tau_{j'} f, e \rho \rangle \right) - \left( \sum_{\rho}   \beta^{\rho} \langle \tau_{j'} f, r \rho \rangle \right) \\
&= \left( \sum_{\omega} \alpha_{j'}^{\omega} \langle f,  \omega \rangle \right) + \left( \sum_{\rho}  \beta^{\rho} \langle \tau_{j'} f, e \rho \rangle \right) - \left( \sum_{l=0}^{z} \sum_{\nu}   \beta^{\nu} \langle \tau_{j'} f, s \nu \rangle \right) \\
&= \left( \sum_{j=1}^{k+1} \sum_{\omega} \alpha_j^{\omega} \langle \tau_{j'} f, t_j \omega \rangle \right) + \left( \sum_{\rho}  \beta^{\rho} \langle \tau_{j'} f, e \rho \rangle \right) - \left( \sum_{l=0}^{z} \sum_{\nu}   \beta^{\nu} \langle \tau_{j'} f, s \nu \rangle \right),
\end{align*}
where $\langle \tau_{j'} f, e \rho \rangle - \langle \tau_{j'} f, r \rho \rangle=0$ by  \autoref{prop:r}, and the sum over $j$ has all summands equal to zero, apart from the one where $j=j'$ by  \autoref{prop:casework}.  Thus, \eqref{eq:claim} holds, and so $\lambda$ is in the span of functions of the form $\langle-,\cb\rangle$.
\end{proof}

\begin{cor}\label{cor:X}
Fix $n\in \N$. The homomorphism of $\FI^\op$-modules
\[\X\colon \Z\FI(-,n) \longrightarrow \Z\CB(-,n)\]
from \autoref{prop:X} induces an isomorphism
\[\Z\FI(k,n) \stackrel{\cong}\longrightarrow \Z\CB(k,n)\]
if $n\ge 2k-1$. And 
\[\xymatrix{
\Z\CB(k,n) \ar[r]^{\X^{-1}} \ar[d]_{f\cdot} &\Z\FI(k,n)\ar[d]_{f\cdot} \\
\Z\CB(\ell,n) \ar[r]^{\X^{-1}} & \Z\FI(\ell,n)
}\]
commutes for every $f\in \FI(\ell,k)$ when  $n \ge 2k-1$.
\end{cor}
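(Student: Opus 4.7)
The plan is to deduce both assertions directly from \autoref{thm:perfect} together with the naturality already provided by \autoref{prop:X}, with essentially no new computation. Given how much work has gone into setting up the perfect pairing, I do not expect a serious obstacle here: the task is just bookkeeping, and the slightly subtle point is only making sure that the hypothesis $n \ge 2k-1$ is also enough to apply the isomorphism in degree $\ell$ when passing to the commutativity statement.

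For the isomorphism claim, assume $n \ge 2k-1$. By \autoref{lem:CB'} one has $\CB(k,n) = \CB'(k,n)$, and then \autoref{prop:count} guarantees that $\Z\FI(k,n)$ and $\Z\CB(k,n)$ are free abelian groups of the same finite rank. In the bases $\FI(k,n)$ and $\CB(k,n)$, the matrix representing $\X$ has $(f,\cb)$-entry equal to $\langle f,\cb\rangle$, so it is exactly the Gram matrix of the restricted pairing. By \autoref{thm:perfect} this pairing is perfect, meaning the matrix is unimodular, and hence $\X$ is an isomorphism of free abelian groups in this range.

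For the commutativity claim, note that if $f \in \FI(\ell,k)$ then in particular $\ell \le k$, so $n \ge 2k-1 \ge 2\ell-1$. Therefore the first part of the corollary applies in both degrees, producing isomorphisms $\X\colon \Z\FI(k,n)\to\Z\CB(k,n)$ and $\X\colon \Z\FI(\ell,n)\to\Z\CB(\ell,n)$. Since \autoref{prop:X} asserts that $\X$ is a morphism of $\FI^\op$-modules, the square
\[
\xymatrix{
\Z\FI(k,n)\ar[r]^{\X}\ar[d]_{f\cdot} & \Z\CB(k,n)\ar[d]^{f\cdot}\\
\Z\FI(\ell,n)\ar[r]^{\X} & \Z\CB(\ell,n)
}
\]
already commutes; inverting both horizontal arrows yields the desired square for $\X^{-1}$.
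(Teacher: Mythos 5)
Your proof is correct and is exactly the intended derivation: the paper states the corollary without a separate proof precisely because it follows immediately from \autoref{thm:perfect} (the Gram matrix of $\X$ in the given bases is unimodular, so $\X$ is an isomorphism in the stated range) together with the $\FI^\op$-naturality of $\X$ from \autoref{prop:X}. Your observation that $f\in\FI(\ell,k)$ forces $\ell\le k$, so $n\ge 2k-1\ge 2\ell-1$ and $\X^{-1}$ is defined in degree $\ell$ as well, is the right bookkeeping point.
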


\subsection{Proofs of \hyperref[thm:main_concrete]{Theorems \ref{thm:main_concrete}} and \ref{thm:compute}}

\begin{proof}[Proof of  \autoref{thm:main_concrete}]
In the proof of this theorem, we make use of the formula $A_\ell = M \otimes_\FI \Xi(\ell)$ from the introduction. Assume $M$ is presented in degrees $\leq d$ so that
\[ M_n \cong (i_d^*M) \otimes_{\FI_{\le d}} \Z\FI(i_d-,n)\]
for all $n\in \N$. By \autoref{cor:X}, 
\[  \Z\FI(i_d-,n) \cong \Z\CB(i_d-,n)\]
for all $n\ge 2d-1$.  Arguing directly,
\begin{align*}
M_n &\cong (i_d^*M) \otimes_{\FI_{\le d}} \Z\FI(i_d-,n) \\
&\cong (i_d^*M) \otimes_{\FI_{\le d}} \Z\CB(i_d-,n) \\
&\cong (i_d^*M) \otimes_{\FI_{\le d}} \left(\bigoplus_{\ell=0}^d (i_d^*\Xi(\ell))^{\oplus \Catalan(\ell,n)} \right)\\
&\cong (i_d^*M) \otimes_{\FI_{\le d}} \Z\FI(i_d-,-) \otimes_\FI\left(\bigoplus_{\ell=0}^d (\Xi(\ell))^{\oplus \Catalan(\ell,n)} \right)\\
&\cong M \otimes_{\FI} \left(\bigoplus_{\ell=0}^d \Xi(\ell)^{\oplus \Catalan(\ell,n)} \right)\\
&\cong \bigoplus_{\ell=0}^d \left(M \otimes_{\FI} \Xi(\ell)\right)^{\oplus \Catalan(\ell,n)}\\
&\cong \bigoplus_{\ell=0}^d \left(M \otimes_{\FI} \Xi(\ell)\right)^{\oplus \binom n\ell - \binom n{\ell-1}}\qedhere
\end{align*}
\end{proof}

\begin{proof}[Proof of  \autoref{thm:compute}]
The $\FI$-matrix $Z$ describes a map
\[
\left( \bigoplus_{j} \Z\FI(b_j,-) \right) \xrightarrow{Z} \left(\bigoplus_{i} \Z\FI(a_i, -)\right).
\]
To compute this map after application of the functor $- \otimes_{\FI} \Xi(\ell)$, we use Yoneda's lemma on the entries.  Suppose $f \colon a \to b$ is a morphism of $\FI$, appearing in one of the entries of $Z$.  Let $\Z\FI(f,-)$ be the map on free $\FI$-modules induced by precomposition with $f$, and compute
\begin{align*}
& \left[\Z\FI(b,-) \xrightarrow{\Z\FI(f,-)} \Z\FI(a,-) \right] \otimes_{\FI} \Xi(\ell)\\
=& \left[\Z\FI(b,-)\otimes_{\FI} \Xi(\ell) \xrightarrow{\Z\FI(f,-)\otimes_{\FI} \Xi(\ell)} \Z\FI(a,-)\otimes_{\FI} \Xi(\ell) \right] \\
=& \left[\Xi(\ell)_b \xrightarrow{f\cdot} \Xi(\ell)_a \right] \\
=& \; \Xi(\ell)_f.
\end{align*}
  Extending this computation to $\mathbb{Z}$-linear combinations, and to block matrices, we obtain the formula $Z \otimes_{\FI} \Xi(\ell) = \Xi(\ell)_Z$.  The result follows, since tensor products preserve cokernels. 
\end{proof}


\section{The category of $\FI$-tails}
In this section, we reconstruct the tail of an $\FI$-module $M$ presented in finite degree from the tensor products $M \otimes_\FI \Xi(\ell)$ for $\ell \in \mathbb{N}$.

Recall from \autoref{defn:FJ}, $\FJ$ is the category whose objects are $\{0,1,2,\dots\}$ and $\FJ(\ell,m) = \Hom_{\FI^{\op}}(\Xi(\ell),\Xi(m))$, and from \autoref{defn:FJd}, $\FJ_{\le d}$ is  the category whose objects are $\{0,1,2,\dots,d\}$ and $\FJ_{\le d}(\ell,m) = \Hom_{\FI^{\op}_{\le d}}(i^*_d\Xi(\ell),i^*_d\Xi(m))$.

\begin{defn}\label{defn:pidell}
Let us define some elements in $\Xi(\ell)_d$ and, by the isomorphism $\kappa_c$ from \eqref{eq:CBXiiso}, corresponding elements in $\CB^{c}_\ell(d,2d)$ where $c = 24 \dots (2\ell) \in \Catalan(\ell,2d)$. These element assist in the proof of \autoref{thm:main}. Let
\[ \xi_{d,\ell} = 1 \dots \ell x_1 \dots x_{d-\ell} \in \Xi(\ell)_d \quad \text{and}\quad \cb_{d,\ell} = \kappa_c(\xi_{d,\ell}) = 24 \dots (2\ell) x_1 \dots x_{d-\ell} \in \CB^c_\ell(d,2d),\]and let
\begin{multline*} \zeta_{d,\ell} = x_1 1 \dots x_\ell \ell x_{\ell+1} \dots x_{d-\ell} \in  \Xi(\ell)_d \quad \text{and}\\ \rho_{d,\ell} = \kappa_c(\zeta_{d,\ell})= x_1 2 \dots x_\ell (2\ell) x_{\ell+1} \dots x_{d-\ell}  \in \CB^c_\ell(d,2d).\end{multline*}
Let $\tau_{d,\ell} \in \SG_{d}$ denote the unique permutation such that $\tau_{d,\ell} \xi_{d,\ell} = \zeta_{d,\ell}$ and $\tau_{d,\ell} \cb_{d,\ell}= \rho_{d,\ell}$.
\end{defn}

\begin{lem}\label{lem:a1b2*24=12}
$\X^{-1}(\cb_{d,\ell})\zeta_{2d,\ell} = \xi_{d,\ell}$ and $\X^{-1}(\cb_{d,\ell})\rho_{2d,\ell} = \cb_{d,\ell}$.
\end{lem}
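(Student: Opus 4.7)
The plan is to first reduce the second identity to the first. The $\FI^{\op}$-equivariant isomorphism $\kappa_c\colon \Xi(\ell)\to \Z\CB^c_\ell(-,4d)$ (for $c=24\dots(2\ell)\in\Catalan(\ell,4d)$) sends $\zeta_{2d,\ell}\mapsto\rho_{2d,\ell}$ and $\xi_{d,\ell}\mapsto\cb_{d,\ell}$ (interpreted in $\CB^c_\ell(d,4d)$ under the natural identification $\im c = \{2,4,\dots,2\ell\}$), so applying $\kappa_c$ to both sides of the first identity yields the second. It therefore suffices to establish $\X^{-1}(\cb_{d,\ell})\,\zeta_{2d,\ell} = \xi_{d,\ell}$ in $\Xi(\ell)_d$.

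Next I would describe $f\cdot\zeta_{2d,\ell}\in\Xi(\ell)_d$ for each $f\in\FI(d,2d)$. Because $\zeta_{2d,\ell}$ takes the value $i\in[\ell]$ uniquely at position $2i$ and sends every other position of $[2d]$ to a real number whose order agrees with the position order, the class $f\cdot\zeta_{2d,\ell}$ is zero in $\Xi(\ell)_d$ unless $\{2,4,\dots,2\ell\}\subseteq \im f$; when it is nonzero, it is the basis element $\xi^{(f)}$ of $\Xi(\ell)_d$ with $\xi^{(f)}(j)=i$ when $f(j)=2i$, and with $\xi^{(f)}(j)=x_k$ when $f(j)$ is the $k$-th smallest among the $f$-values landing in $\{1,3,\dots,2\ell-1\}\cup\{2\ell+1,\dots,2d\}$. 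For any basis element $\xi$ of $\Xi(\ell)_d$, set $\cb_\xi:=\kappa_c(\xi)\in\CB^c_\ell(d,2d)$ with $c=24\dots(2\ell)\in\Catalan(\ell,2d)$. A position-by-position check then shows that the conditions defining $f\cdot\zeta_{2d,\ell}=\xi$ are exactly the conditions defining a match $\langle f,\cb_\xi\rangle=1$: the forced equalities $f(j)=2\xi(j)$ where $\xi(j)\in[\ell]$ are the fixed-position condition for $\cb_\xi$, and the requirement that the $f$-values on $\xi^{-1}(\{x_1,\dots,x_{d-\ell}\})$ be ordered to match the $x$-indices of $\xi$ is the variable-position ordering condition.

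Writing $\X^{-1}(\cb_{d,\ell})=\sum_f \beta_f f$, the coefficient of a basis element $\xi$ in $\X^{-1}(\cb_{d,\ell})\,\zeta_{2d,\ell}$ is then
\[
\sum_{f:\,f\cdot\zeta_{2d,\ell}=\xi}\!\!\beta_f \;=\; \sum_f \beta_f\,\langle f,\cb_\xi\rangle \;=\; \langle \X^{-1}(\cb_{d,\ell}),\,\cb_\xi\rangle \;=\; \delta_{\cb_\xi,\,\cb_{d,\ell}} \;=\; \delta_{\xi,\,\xi_{d,\ell}},
\]
where the third equality uses \autoref{thm:perfect} (so $\X^{-1}(\cb_{d,\ell})$ is the dual basis vector of $\cb_{d,\ell}$), and the last uses $\kappa_c(\xi_{d,\ell})=\cb_{d,\ell}$. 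Summing over the basis $\xi$ gives $\X^{-1}(\cb_{d,\ell})\,\zeta_{2d,\ell}=\xi_{d,\ell}$.

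The main obstacle I anticipate is the identification $\{f:f\cdot\zeta_{2d,\ell}=\xi\}=\{f: f\text{ matches }\cb_\xi\}$; this depends on carefully tracking the canonical normalization from \autoref{rem:x} against the interlaced pattern of $\zeta_{2d,\ell}$, and in particular on checking that the map from $x$-positions of $\zeta_{2d,\ell}$ to their $x$-indices is monotone so the two ordering conditions align. Once this bijection is confirmed, the remainder is a purely formal duality computation.
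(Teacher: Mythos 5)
Your proof is correct and follows essentially the same route as the paper: the paper likewise observes that $f\zeta_{2d,\ell}$ equals $\sigma\xi_{d,\ell}$ exactly when $f$ matches $\sigma\cb_{d,\ell}$ (and vanishes when $\{2,4,\dots,2\ell\}\not\subseteq\im f$), and then uses that $\langle \X^{-1}(\cb_{d,\ell}),\omega\rangle=\delta_{\cb_{d,\ell},\omega}$, which is your dual-basis step via \autoref{thm:perfect} and \autoref{cor:X}. Your explicit reduction of the second identity to the first through the $\FI^{\op}$-equivariant isomorphism $\kappa_c$ is exactly the (implicit) reasoning in the paper, so there is nothing to correct.
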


\begin{proof}
For every $f \in \FI(d,2d)$ that matches $\cb_{d,\ell}$, we get $f\zeta_{2d,\ell} = \xi_{d,\ell}$.  
 Further, if $f \in \FI(d,2d)$ matches $\sigma \cb_{d,\ell}$ for some $\sigma \in \SG_d$, then $\sigma^{-1} f$ matches $\cb_{d,\ell}$, and $f\zeta_{2d,\ell}= \sigma \xi_{d,\ell}$.  If $f \in \FI(d, 2d)$ fails to match any such  $\sigma \cb_{d,\ell}$, it must be because $\{2,4,\dots,2\ell\}$ is not contained in $\im f$.  In this case, $f\zeta_{2d,\ell} = 0$.
  
Therefore, if $\X^{-1}(\cb_{d,\ell}) = \sum_{f\in\FI(d,2d)} a_f f\in \Z \FI(d,2d)$, then
\begin{align*}
 \X^{-1}(\cb_{d,\ell})\zeta_{2d,\ell} &= \sum_{f\in\FI(d,2d)} a_f f \zeta_{2d,\ell} \\
 &=  \sum_{\substack{f\in\FI(d,2d) \\ \{2, 4, \ldots, 2\ell\} \not \subseteq \im f}} a_f f \zeta_{2d,\ell} +  \sum_{\sigma \in \SG_d} \sum_{\substack{f\in\FI(d,2d) \\ \langle f,\sigma\cb_{d,\ell}\rangle = 1}} a_f f \zeta_{2d,\ell}\\
 &=  \hspace{25pt} 0   \hspace{25pt} +   \hspace{52pt} \sum_{\sigma \in \SG_d} \sum_{\substack{f\in\FI(d,2d) \\ \langle f,\sigma\cb_{d,\ell}\rangle = 1}} a_f \sigma \xi_{d,\ell} \\
  &=   \sum_{\sigma \in \SG_d} \sum_{f\in\FI(d,2d)} \langle f,\sigma\cb_{d,\ell}\rangle \cdot a_f  \sigma \xi_{d,\ell} \\
 &=   \sum_{\sigma \in \SG_d} \sum_{f\in\FI(d,2d)} \langle a_f f,\sigma\cb_{d,\ell}\rangle \cdot \sigma \xi_{d,\ell} \\
  &=   \sum_{\sigma \in \SG_d} \langle \X^{-1}(\cb_{d,\ell}) ,\sigma\cb_{d,\ell}\rangle \cdot \sigma \xi_{d,\ell} \\
 \end{align*}
Notice that for $\omega \in \CB(d,2d)$
\[ \langle \X^{-1}(\cb_{d,\ell}), \omega \rangle = \begin{cases} 1 & \cb_{d,\ell}= \omega\\ 0 &\cb_{d,\ell} \neq \omega.\end{cases}\]
Thus 
\[ \X^{-1}(\cb_{d,\ell})\zeta_{2d,\ell} = \sum_{\sigma \in \SG_d} \langle \X^{-1}(\cb_{d,\ell}),\sigma\cb_{d,\ell} \rangle\sigma\xi_{d,\ell} = \xi_{d,\ell}.\qedhere\]
\end{proof}

\begin{defn}
For $\ell \le d$, let $\Theta_{\le d}(\ell) \subseteq \Z\FI(d,-)$ be the $\FI$-submodule that is generated by $\X^{-1}(\cb_{d,\ell}) \in \Z\FI(d,2d)$.
\end{defn}

We include two lemmas about $\Theta_{\le d}(\ell)$ for later use.

\begin{lem}\label{lem:thetapres}
The $\FI$-module $\Theta_{\le d}(\ell)$ is presented in degrees $\le 3d+1$.
\end{lem}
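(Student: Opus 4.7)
The plan is to bound the relations of $\Theta_{\le d}(\ell)$ by applying the Church--Ellenberg regularity theorem to the cokernel $Q$ of the inclusion $\Theta_{\le d}(\ell) \hookrightarrow \Z\FI(d,-)$.

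First I would observe that $\Theta_{\le d}(\ell)$ fits into a short exact sequence of $\FI$-modules
\[ 0 \longrightarrow \Theta_{\le d}(\ell) \longrightarrow \Z\FI(d,-) \longrightarrow Q \longrightarrow 0, \]
where $Q = \Z\FI(d,-)/\Theta_{\le d}(\ell)$. Since $Q$ is a quotient of the representable $\Z\FI(d,-)$, it is generated in degree $d$, and since $\Theta_{\le d}(\ell)$ is by definition generated by the single element $\X^{-1}(\cb_{d,\ell})$ in degree $2d$, the module $Q$ is presented by one generator in degree $d$ and one relation in degree $2d$.

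Second, Church--Ellenberg's regularity bound (which over arbitrary commutative Noetherian rings gives $\mathrm{reg}(M) \le t_0(M) + t_1(M) - 1$ for finitely presented $\FI$-modules $M$) gives $\mathrm{reg}(Q) \le d + 2d - 1 = 3d-1$. Consequently, the $\FI$-homology $H^{\FI}_i(Q)$ is supported in degrees $\le 3d-1+i$; in particular, $H^{\FI}_2(Q)$ is supported in degrees $\le 3d+1$.

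Third, the long exact sequence of $\FI$-homology associated to the above short exact sequence, combined with the vanishing $H^{\FI}_i(\Z\FI(d,-)) = 0$ for $i\ge 1$ (representable $\FI$-modules are projective), yields a surjection $H^{\FI}_2(Q) \twoheadrightarrow H^{\FI}_1(\Theta_{\le d}(\ell))$. Hence the relations of $\Theta_{\le d}(\ell)$ live in degrees $\le 3d+1$; since its single generator lies in degree $2d \le 3d+1$, we conclude that $\Theta_{\le d}(\ell)$ is presented in degrees $\le 3d+1$.

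I expect the main step to be the invocation of the Church--Ellenberg regularity bound; the remainder of the argument is routine homological algebra. A purely combinatorial alternative --- directly exhibiting the relations of $\Theta_{\le d}(\ell)$ in low degrees via the Catalan basis and the perfect pairing of \autoref{thm:perfect} --- appears considerably more involved, so the homological route above is preferable.
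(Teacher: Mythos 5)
Your proposal is correct and follows essentially the same route as the paper: form the quotient $Q = \Z\FI(d,-)/\Theta_{\le d}(\ell)$, note it is generated in degree $d$ and related in degree $2d$, apply the Church--Ellenberg regularity theorem to bound its second syzygies by $3d+1$, and transfer this to $H_1^{\FI}(\Theta_{\le d}(\ell))$ via the short exact sequence. (One small remark: the Church--Ellenberg bound needs no Noetherian hypothesis on the ring, but this does not affect your argument.)
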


\begin{proof}
By definition, $\Theta_{\le d}(\ell)$ is generated in degree $2d$. Let $Q$ be the quotient of $\Z\FI(d,-)$ modulo $\Theta_{\le d}(\ell)$. Clearly, $Q$ is generated in degree $d$ and presented in degree $2d$. By \cite[Theorem A]{CE17}, the next syzygies of $Q$ are generated in degrees $\le 3d+1$. Thus $\Theta_{\le d}(\ell)$ is presented in degrees $\le 3d+1$.
\end{proof}

\begin{lem}\label{lem:thetapoly}
The $\FI$-module $\Theta_{\le d}(\ell)$ has polynomial degree $\le d$.
\end{lem}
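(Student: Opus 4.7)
My plan is to leverage the ambient inclusion $\Theta_{\le d}(\ell)\subseteq\Z\FI(d,-)$ together with \autoref{lem:thetapres}, which guarantees $\Theta_{\le d}(\ell)$ is presented in finite degree. Since $\Z\FI(d,-)$ itself has polynomial degree $\le d$, the result will follow from the general principle that polynomial degree is inherited by submodules that are presented in finite degree.

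I would first verify that $\Z\FI(d,-)$ has polynomial degree $\le d$ by induction on $d$. Sorting injections $[d]\to[1]\sqcup S$ according to whether $1$ lies in their image provides a natural splitting of $\FI$-modules
\[
\Sigma\Z\FI(d,-)\;\cong\;\Z\FI(d,-)\;\oplus\;\Z\FI(d-1,-)^{\oplus d},
\]
where the summand index $i\in[d]$ records the position mapping to $1$. Hence $\coker(\Z\FI(d,-)\to\Sigma\Z\FI(d,-))\cong\Z\FI(d-1,-)^{\oplus d}$, and induction (base case: $\Z\FI(0,-)=\uZ$ is eventually constant) yields polynomial degree $\le d$.

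Next I apply the snake lemma to the natural transformation $(-)\to\Sigma(-)$ on the short exact sequence
\[
0\longrightarrow\Theta_{\le d}(\ell)\longrightarrow\Z\FI(d,-)\longrightarrow Q\longrightarrow 0,
\]
where $Q$ is presented in degrees $\le 2d$ as observed in the proof of \autoref{lem:thetapres}. The middle vertical map is injective by the splitting above, so the snake lemma produces an exact sequence
\[
0\longrightarrow\ker(Q\to\Sigma Q)\longrightarrow\coker(\Theta_{\le d}(\ell)\to\Sigma\Theta_{\le d}(\ell))\longrightarrow\Z\FI(d-1,-)^{\oplus d}.
\]
The leftmost term is supported in finite degree because $Q$ is presented in finite degree (standard remark recalled after \autoref{defn:poly}), so in particular it has polynomial degree $\le d-1$. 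The image of the rightmost arrow is a submodule of $\Z\FI(d-1,-)^{\oplus d}$ that is itself presented in finite degree; strengthening the inductive hypothesis to cover submodules of finite direct sums of free $\FI$-modules shows that this image has polynomial degree $\le d-1$. Since polynomial degree $\le d-1$ is closed under extensions (a short snake-lemma argument), $\coker(\Theta_{\le d}(\ell)\to\Sigma\Theta_{\le d}(\ell))$ has polynomial degree $\le d-1$, and therefore $\Theta_{\le d}(\ell)$ has polynomial degree $\le d$.

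The main technical obstacle is the induction: one must set up the strengthened hypothesis (``every submodule of $\Z\FI(d,-)^{\oplus k}$ that is presented in finite degree has polynomial degree $\le d$'', for all $k\ge 0$) and verify it simultaneously in $d$, with the snake-lemma argument above supplying the inductive step. Once that scaffolding is in place, the remainder is mechanical bookkeeping with the standard closure properties of polynomial-degree-$\le(d-1)$ modules under extensions and quotients.
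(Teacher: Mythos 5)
Your strategy works, but it is a genuinely different and much heavier route than the paper's, and it quietly outsources the real content. The paper's proof is two lines: subquotients of $\FI$-modules of polynomial degree $\le d$ again have polynomial degree $\le d$ (citing \cite[2.8(c)]{NS17}), and $\Z\FI(d,-)$ has polynomial degree $\le d$; the lemma is immediate since $\Theta_{\le d}(\ell)\subseteq\Z\FI(d,-)$. Your snake-lemma induction is, in effect, a proof of that subobject-closure statement in the special case of submodules of free modules --- but at the decisive moment you invoke ``standard closure properties of polynomial degree $\le d-1$ under extensions and quotients'' without proof, and those closure statements (for weak polynomial degree of $\FI$-modules presented in finite degree) are of exactly the same depth as the subquotient-closure fact the paper cites. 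So your argument is not wrong, but it does not make the lemma more elementary: if one is willing to quote the closure properties, one may as well quote subquotient closure directly and skip the induction entirely; if one is not, the extension/quotient closure still has to be proved, including the delicate base case about eventually constant modules.

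Two bookkeeping points would also need attention for your induction to typecheck. First, the paper's notion of polynomial degree is only defined for modules presented in finite degree, so you must verify this for the intermediate objects: the image $N\subseteq\Z\FI(d-1,-)^{\oplus d}$ is presented in finite degree either by Noetherianity of integral $\FI$-modules \cite{CEFN14}, or by noting it is the quotient of $\coker(\Theta_{\le d}(\ell)\to\Sigma\Theta_{\le d}(\ell))$ (presented in finite degree) by the image of $\ker(Q\to\Sigma Q)$, which is supported, hence generated, in finite degree; you assert this without justification. Second, the strengthened induction needs a base case $d=0$: submodules of $\uZ^{\oplus k}$ presented in finite degree are eventually constant, which holds over $\Z$ (ascending chains of subgroups of $\Z^k$ stabilize) but is not automatic and should be stated. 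With those repairs, and with the closure properties either proved or cited, your proof goes through; it simply buys nothing over the paper's one-line citation.
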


\begin{proof}
Subquotients of $\FI$-modules with polynomial degree $\le d$ also have polynomial degree $\le d$. (See e.g.\ \cite[2.8(c)]{NS17}.) Therefore the assertion follows because $\Z\FI(d,-)$ has polynomial degree $\le d$.
\end{proof}

\begin{prop}\label{prop:Thetahomcovariant}
$\Theta_{\le d}(\ell) \otimes_{\FI}\Xi(-) \cong \FJ_{\le d}(\ell, -)$
\end{prop}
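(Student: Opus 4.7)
The plan is to realize both sides as subgroups of $\Xi(m)_d$ and show these subgroups coincide. First, I would use flatness of $\Xi(m)$ (\autoref{prop:Xiflat}) together with Yoneda to obtain an injection
\[
\Theta_{\le d}(\ell) \otimes_\FI \Xi(m) \hookrightarrow \Z\FI(d,-) \otimes_\FI \Xi(m) \cong \Xi(m)_d.
\]
Since $\Theta_{\le d}(\ell)$ is generated by $\X^{-1}(\cb_{d,\ell}) \in \Z\FI(d,2d)$, the tensor relations reduce every element to the form $\X^{-1}(\cb_{d,\ell}) \otimes \xi$ for some $\xi \in \Xi(m)_{2d}$, so the image is the subgroup $\bigl\{\X^{-1}(\cb_{d,\ell}) \cdot \xi : \xi \in \Xi(m)_{2d}\bigr\}$. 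Meanwhile, \autoref{prop:mapseq} injects $\FJ_{\le d}(\ell,m) \hookrightarrow \Xi(m)_d$ via $\varphi \mapsto \varphi_d(\xi_{d,\ell})$, and both embeddings are easily seen to be compatible with the $\FJ$-action on $\Xi(-)_d$, so matching the images will yield the claimed isomorphism of $\FJ$-modules.

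For the inclusion of the image of $\FJ_{\le d}(\ell,m)$ into the first image, I would take $\varphi \in \FJ_{\le d}(\ell,m)$, extend it to $\tilde\varphi \in \FJ(\ell,m)$ using \autoref{thm:FJ}, and then invoke the identity $\X^{-1}(\cb_{d,\ell}) \cdot \zeta_{2d,\ell} = \xi_{d,\ell}$ of \autoref{lem:a1b2*24=12} together with $\FI^{\op}$-naturality of $\tilde\varphi$:
\[
\varphi_d(\xi_{d,\ell}) = \tilde\varphi_d\bigl(\X^{-1}(\cb_{d,\ell}) \cdot \zeta_{2d,\ell}\bigr) = \X^{-1}(\cb_{d,\ell}) \cdot \tilde\varphi_{2d}(\zeta_{2d,\ell}),
\]
so $\xi := \tilde\varphi_{2d}(\zeta_{2d,\ell})$ witnesses $\varphi_d(\xi_{d,\ell})$ as an element of the first image.

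The main obstacle is the reverse inclusion: given $\xi \in \Xi(m)_{2d}$, I must construct some $\varphi \in \FJ_{\le d}(\ell,m)$ with $\varphi_d(\xi_{d,\ell}) = \X^{-1}(\cb_{d,\ell}) \cdot \xi$. Here I would exploit \autoref{cor:X}, which applies in every degree $k \le d$ since $2d \ge 2k-1$, to deduce that $\X$ furnishes an isomorphism of $\FI^{\op}_{\le d}$-modules; combined with the $\FI^{\op}$-stable summand decomposition $\Z\CB(-,2d) = \bigoplus_{\ell',c} \Z\CB^c_{\ell'}(-,2d)$ and the isomorphisms $\Z\CB^c_{\ell'}(-,2d) \cong \Xi(\ell')$ from \eqref{eq:CBXiiso}, this yields a splitting
\[
i_d^* \Z\FI(-,2d) \;\cong\; \bigoplus_{\ell'=0}^d \bigoplus_{c \in \Catalan(\ell',2d)} i_d^* \Xi(\ell').
\]
Yoneda then converts $\xi$ into an $\FI^{\op}$-module map $\Phi_\xi \colon \Z\FI(-,2d) \to \Xi(m)$ sending $\id_{2d} \mapsto \xi$, and I would take $\varphi$ to be the composite of $i_d^* \Phi_\xi$ with the inclusion of the summand indexed by $(\ell,c_0)$, where $c_0 = (2,4,\ldots,2\ell) \in \Catalan(\ell,2d)$ is the Catalan of $\cb_{d,\ell}$. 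Tracing the identifications, $\xi_{d,\ell}$ corresponds under $\kappa_{c_0}$ to $\cb_{d,\ell}$ and then under $\X^{-1}$ to $\X^{-1}(\cb_{d,\ell})$, so $\varphi_d(\xi_{d,\ell}) = \Phi_\xi(\X^{-1}(\cb_{d,\ell})) = \X^{-1}(\cb_{d,\ell}) \cdot \xi$, as required. The crux is that the Catalan Basis supplies a concrete $\FI^{\op}_{\le d}$-linear splitting of $\Z\FI(-,2d)$ whose $\Xi(\ell)$-summand picks out precisely the element $\X^{-1}(\cb_{d,\ell})$.
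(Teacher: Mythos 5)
Your proposal is correct and follows essentially the same route as the paper: both sides are embedded into $\Xi(m)_d$ (via flatness of $\Xi(m)$ and via \autoref{prop:mapseq}), one inclusion is obtained by lifting $\varphi$ through \autoref{thm:FJ} and applying \autoref{lem:a1b2*24=12} with $\FI^{\op}$-naturality, and the other by the composite $i_d^*\Xi(\ell)\xrightarrow{\kappa_{c_0}}\Z\CB^{c_0}_\ell(i_d-,2d)\xrightarrow{\X^{-1}}\Z\FI(i_d-,2d)\xrightarrow{f\mapsto f\cdot\xi}i_d^*\Xi(m)$, which is exactly the paper's map phrased as inclusion of a Catalan-basis summand. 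The only difference is that the paper spells out the commuting squares verifying that both images are $\FJ_{\le d}$-submodules of $\Xi(-)_d$, a check you label ``easily seen''; it is indeed routine.
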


\begin{proof}
Since $\Xi(m)$ is flat by \autoref{prop:Xiflat}, the map
\[ \alpha_m \colon \Theta_{\le d}(\ell) \otimes_{\FI} \Xi(m) \subseteq \Z\FI(d,-) \otimes_{\FI} \Xi(m) \stackrel{\cong}{\longrightarrow} \Xi(m)_d\]
is an injection and $\alpha_m(f\otimes x) = f\cdot x$.
Further,  \autoref{prop:mapseq} states that there is an injection
\[ \beta_m \colon \Hom_{\FI^{\op}_{\le d}}(i^*_d\Xi(\ell), i^*_d\Xi(m)) \inject \Xi(m)_d,\]
where a map $\varphi$ is sent to $\varphi(\xi_{d,\ell})$. Let $\psi \in \Hom_{\FI^{\op}_{\le d}}(i^*_d\Xi(m), i^*_d\Xi(n))$ and let $\widetilde \psi \in \Hom_{\FI^\op}(\Xi(m),\Xi(n))$ be an arbitrary lift of $\psi$, whose existence is guaranteed by \autoref{thm:FJ}. Then the squares
\begin{equation}\label{eq:commdiag}
\xymatrix{
\Theta_{\le d}(\ell) \otimes_{\FI}\Xi(m)  \ar[r]^>>>>>{\alpha_m} \ar[d]^{\id \otimes \widetilde\psi} & \Xi(m)_d \ar[d]^{\psi_d} & \Hom_{\FI^{\op}_{\le d}}(i^*_d\Xi(\ell), i^*_d\Xi(m))\ar[l]_>>>>>{\beta_m}\ar[d]^{\psi \circ }\\
\Theta_{\le d}(\ell) \otimes_{\FI}\Xi(n) \ar[r]^>>>>>>{\alpha_n} & \Xi(n)_d & \Hom_{\FI^{\op}_{\le d}}(i^*_d\Xi(\ell), i^*_d\Xi(n))\ar[l]_>>>>>>{\beta_n}
}\end{equation}
commute because 
\[\psi_d( \alpha_m( f\otimes \xi)) = \psi_d(f\xi) = f \widetilde\psi_k(\xi) = \alpha_n\circ (\id \otimes \widetilde \psi) (f \otimes \xi) \]  for every $f \in \FI(d,k)$ and $\xi \in \Xi(m)_k$, and 
\[\beta_n(\psi_*(\varphi)) =\beta_n( \psi \circ \varphi) =  (\psi\circ \varphi)(\xi_{d,\ell}) = \psi_d( \varphi(\xi_{d,\ell})) = \psi_d(\beta_m(\xi_{d,\ell})) \]
for every $\varphi \in \Hom_{\FI^{\op}_{\le d}}(i^*_d\Xi(\ell), i^*_d\Xi(m))$.
These commuting squares show that both $\Theta_{\le d}(\ell) \otimes_{\FI} \Xi(-)$ and $\Hom_{\FI^{\op}_{\le d}}(i^*_d \Xi(\ell), i^*_d \Xi(-))$ are $\FJ_{\le d}$-submodules of $\Xi(-)_d$.
Thus, we want to prove that $\Theta_{\le d}(\ell) \otimes_{\FI}\Xi(m) $ and $\Hom_{\FI^{\op}_{\le d}}(i^*_d\Xi(\ell), i^*_d\Xi(m))$ have the same image in $\Xi(m)_d$.

In order to prove $\im \beta_m \subseteq \im \alpha_m$, we need to show that $\beta_m(\varphi)\in \im \alpha_m$ for all homomorphisms $\varphi \in \Hom_{\FI^\op_{\le d}}(i^*_d\Xi(\ell), i^*_d \Xi(m))$. Let  $\widetilde \varphi \in \Hom_{\FI^\op}(\Xi(m),\Xi(n))$ be a lift of such a $\varphi$.
Using \autoref{lem:a1b2*24=12}, and that $\widetilde \varphi$ commutes with the action of $\FI^\op$, we get
\[ \varphi(\xi_{d,\ell}) = \varphi( \X^{-1}(\cb_{d,\ell}) \cdot \zeta_{2d, \ell}) =  \X^{-1}(\cb_{d,\ell}) \cdot \widetilde \varphi( \zeta_{2d,\ell}) \] 
Recall that $\X^{-1}(\cb_{d,\ell})$ is the generator of $\Theta_{\le d}(\ell)$; thus $\beta_m(\varphi) = \varphi(\xi_{d,\ell})$ is in the image of $\alpha_m$.

We now argue the reverse inclusion. 
First observe that since $\Theta_{\le d}(\ell)$ is generated by $\X^{-1}(\cb_{d,\ell})$, the tensor product $\Theta_{\le d}(\ell) \otimes_\FI \Xi(m)$ is generated by the elements $\X^{-1}(\cb_{d,\ell}) \otimes \xi$ for $\xi \in \Xi(m)_{2d} $. For a $\xi \in \Xi(m)_{2d}$, consider the following composition
\[\varphi\colon i^*_d\Xi(\ell) \xrightarrow{\xi_{d,\ell} \mapsto \cb_{d,\ell}} \Z\CB^{24\dots(2\ell)}_\ell(i_d - ,2d) \xrightarrow{\X^{-1}} \Z\FI(i_d -, 2d)  \xrightarrow{f\mapsto f \cdot \xi} i^*_d \Xi(m).\] 
Then
\[\alpha_m( \X^{-1}(\cb_{d,\ell}) \otimes \xi) =  \X^{-1}(\cb_{d,\ell}) \cdot \xi= \varphi(\xi_{d,\ell}) =  \beta_m(\varphi)\]
is in the image of $\beta_m$.
\end{proof}

\begin{prop}\label{prop:ThetaFJopmodule}
For every 
\[\varphi \in \Hom_{\FJ}(\FJ_{\le d}(m,-) , \FJ_{\le d}(\ell, -)),\]
there is a  homomorphism 
$\widetilde\psi \colon\Z\FI(2d,-) \to \Z\FI(d,-)$
that restricts to a map 
\[\psi \colon \Theta_{\le 2d}(m) \to \Theta_{\le d}(\ell).\] 
 Invoking \autoref{prop:Thetahomcovariant}, $\psi$ induces a homomorphism of $\FJ$-modules 
\[ \psi \otimes_\FI \Xi(-) \colon \FJ_{\le 2d}(m,-) \to \FJ_{\le d}(\ell, -)\]
that factors through $\varphi$ by
\[ \FJ_{\le 2d}(m,-) \surject \FJ_{\le d}(m, - ) \xrightarrow{\varphi} \FJ_{\le d}(\ell,-),\]
where $\FJ_{\le 2d}(m,-) \surject \FJ_{\le d}(m, - )$ is the restriction as in \autoref{thm:FJ}.
\end{prop}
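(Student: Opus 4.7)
The plan is to apply Yoneda twice---once for the $\FI$-module $\Z\FI(2d, -)$ and once for the $\FJ$-module $\FJ_{\le 2d}(m, -)$---which reduces the entire construction and its verification to tracking a single element of $\Xi(m)_d$.

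First, Yoneda identifies $\varphi$ with $v := \varphi_m(\id_m) \in \FJ_{\le d}(\ell, m)$. Through \autoref{prop:Thetahomcovariant}, $v$ in turn corresponds to $\beta_m(v) = v(\xi_{d, \ell}) \in \Xi(m)_d$. The equality $\im \beta_m = \im \alpha_m$ from that proposition lets us write $v(\xi_{d,\ell}) = \X^{-1}(\cb_{d, \ell}) \cdot \eta$ for some $\eta \in \Xi(m)_{2d}$. Next, by \autoref{prop:XiKan}, $\Xi(m)_{2d}$ is $\Z$-spanned by $\SG_{2d}$-translates of $\xi_{2d, m}$, so $\eta = g \cdot \xi_{2d, m}$ for some $g \in \Z\SG_{2d} = \Z\FI(2d, 2d)$.

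Set $h := \X^{-1}(\cb_{d, \ell}) \cdot g \in \Z\FI(d, 2d)$ and let $\widetilde\psi \colon \Z\FI(2d, -) \to \Z\FI(d, -)$ be the corresponding natural transformation, acting on $f \in \Z\FI(2d, n)$ by $f \mapsto h \cdot f$. The generator $\X^{-1}(\cb_{2d, m})$ of $\Theta_{\le 2d}(m)$ maps to $h \cdot \X^{-1}(\cb_{2d, m}) = \X^{-1}(\cb_{d, \ell}) \cdot (g \cdot \X^{-1}(\cb_{2d, m}))$, which lies in $\Theta_{\le d}(\ell)_{4d}$ because $\Theta_{\le d}(\ell)$ is by definition the $\FI$-submodule of $\Z\FI(d, -)$ generated by $\X^{-1}(\cb_{d, \ell})$. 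Hence $\widetilde\psi$ restricts to the desired $\psi$, and $\psi \otimes_\FI \Xi(-)$ is an $\FJ$-module map because the $\FJ$-action on $\Xi(-)$ commutes with the $\FI^{op}$-action.

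To verify the factorization, both $\psi \otimes_\FI \Xi(-)$ and the composition $\FJ_{\le 2d}(m, -) \surject \FJ_{\le d}(m, -) \xrightarrow{\varphi} \FJ_{\le d}(\ell, -)$ are $\FJ$-module maps with source $\FJ_{\le 2d}(m, -)$ and target supported in degrees $\le d$. By \autoref{prop:factoring}, the kernel of $\FJ(m, -) \surject \FJ_{\le 2d}(m, -)$ consists of maps factoring through $\Xi(2d + 1)$, which act as zero on any module supported in degrees $\le 2d$; hence both maps are determined by their image of $\id_m \in \FJ_{\le 2d}(m, m)$. Under $\Theta_{\le 2d}(m) \otimes_\FI \Xi(m) \cong \FJ_{\le 2d}(m, m)$, the identity corresponds to $\X^{-1}(\cb_{2d, m}) \otimes \zeta_{4d, m}$ by \autoref{lem:a1b2*24=12}; applying $\psi \otimes_\FI \Xi(m)$ and $\alpha_m$ yields $h \cdot \X^{-1}(\cb_{2d, m}) \cdot \zeta_{4d, m} = h \cdot \xi_{2d, m} = \X^{-1}(\cb_{d, \ell}) \cdot \eta = v(\xi_{d, \ell}) = \beta_m(v)$, again via \autoref{lem:a1b2*24=12}. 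The composition sends $\id_m$ to $\varphi_m(\id_m) = v$, so the two maps agree. The main obstacle is the opening move: placing $v(\xi_{d, \ell})$ inside the image of $\alpha_m$ so that it takes the form $\X^{-1}(\cb_{d, \ell}) \cdot \eta$. This step, which licenses the construction of $h$, is exactly the content of \autoref{prop:Thetahomcovariant}, and it forces the resulting $\widetilde\psi$ to recover $\varphi$ on the tensor product level.
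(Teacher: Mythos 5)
Your argument is correct, and it follows the same overall architecture as the paper's proof (reduce $\varphi$ to $v=\varphi(\id_m)$, build $\widetilde\psi$ as precomposition with an element of $\Z\FI(d,2d)$ of the shape $\X^{-1}(\cb_{d,\ell})\cdot(\text{something})$, then check the factorization by evaluating on $\id_m$ via \autoref{lem:a1b2*24=12}), but it differs in how that precomposition element is produced. The paper writes it down explicitly as $(\X^{-1}\circ\gamma^\kappa)(\cb_{d,\ell})\cdot\tau_{2d,\ell}$, using the conjugated map $\gamma^\kappa$, the auxiliary elements $\rho_{2d,\ell},\tau_{2d,\ell}$ of \autoref{defn:pidell}, and the commutation of $\X^{-1}$ with the $\FI^\op$-action (\autoref{cor:X}) to show it lies in $\Theta_{\le d}(\ell)$ and induces $\varphi$; you instead pull the element out abstractly, using the containment $\im\beta_m\subseteq\im\alpha_m$ established for \autoref{prop:Thetahomcovariant} together with the fact (\autoref{prop:XiKan}) that $\Xi(m)_{2d}$ is free of rank one over $\Z\SG_{2d}$ on $\xi_{2d,m}$, so that $v(\xi_{d,\ell})=\X^{-1}(\cb_{d,\ell})\cdot g\cdot\xi_{2d,m}$ and $h=\X^{-1}(\cb_{d,\ell})\cdot g$ works. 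Your choice makes the restriction $\Theta_{\le 2d}(m)\to\Theta_{\le d}(\ell)$ and the evaluation at $\id_m$ essentially immediate, at the cost of being non-constructive (the element $h$ depends on a choice of preimage, which is fine since only existence is claimed) and of leaning on the explicit form of the isomorphism of \autoref{prop:Thetahomcovariant} via $\alpha_m,\beta_m$, which sits in that proposition's proof rather than its statement. Two small remarks: the appeal to \autoref{prop:factoring} in your factorization step is not needed---since the restriction $\FJ(m,-)\surject\FJ_{\le 2d}(m,-)$ is surjective by \autoref{thm:FJ}, any $\FJ$-module map out of $\FJ_{\le 2d}(m,-)$ is already determined by its value on $\id_m$ by a Yoneda-type argument; and your identification of $\id_m$ with $\X^{-1}(\cb_{2d,m})\otimes\zeta_{4d,m}$ implicitly uses the injectivity of $\alpha_m$ (flatness of $\Xi(m)$), which is worth stating.
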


\begin{proof}
Set $\gamma = \varphi(\id_m)\in \FJ_{\le d}(\ell,m)$. Recall that $\gamma\colon i_d^*\Xi(\ell) \to i^*_d\Xi(m)$ is an $\FI_{\le d}^\op$-module homomorphism  by definition. Let $ \gamma^\kappa$ denote $\gamma$ conjugated by the natural isomorphism $\kappa$:
\[  \gamma^\kappa\colon i^*_d\CB_\ell^{24\dots (2\ell)}(-,2d) \xrightarrow{\kappa^{-1}} i^*_d \Xi(\ell) \xrightarrow{\gamma} i^*_d\Xi(m) \xrightarrow{\kappa} i^*_d\CB_m^{24\dots (2m)}(-,2d)\]
We define $\widetilde\psi\colon \Z\FI(2d,-) \to \Z\FI(d,-)$ by precomposing with
\[(\X^{-1}\circ \gamma^\kappa)(\cb_{d,\ell})\cdot \tau_{2d,\ell} \in \Z\FI(d,2d).\] 
It remains to prove that $\widetilde \psi$ restricts to 
\[ \psi \colon \Theta_{\le 2d}(m) \to \Theta_{\le d}(\ell)\]
and $\im  (\psi \otimes_\FI \Xi(-)) = \im \varphi$.

The first assertion is that precomposing with $(\X^{-1}\circ \gamma^\kappa)(\cb_{d,\ell})\cdot \tau_{2d,\ell}$ sends $\Theta_{\le 2d}(m)$ into $\Theta_{\le d}(\ell)$. In fact, by \autoref{lem:a1b2*24=12}, $\cb_{d,\ell} = \X^{-1}(\cb_{d,\ell}) \cdot \rho_{2d,\ell}$, and so 
\[
(\X^{-1}\circ \gamma^\kappa)(\cb_{d,\ell}) = (\X^{-1}\circ \gamma^\kappa)(\X^{-1}(\cb_{d,\ell})\cdot \rho_{2d,\ell}) = \X^{-1}(\cb_{d,\ell})\cdot (\X^{-1}\circ \gamma^\kappa)(\rho_{2d,\ell}),\]
where the second equality comes from the fact that $(\X^{-1} \circ  \gamma^\kappa)$ commutes with $\X^{-1}(\cb_{d,\ell}) \in \Z\FI(d,2d)$ by \autoref{cor:X}. This proves the first assertion because $\Theta_{\le d}(\ell)$ is generated by $\X^{-1}(\cb_{d,\ell})$.

For the second assertion, we have to understand the map
\[ \psi\otimes_\FI \Xi(-) \colon \FJ_{\le 2d}(m,-) \to \FJ_{\le d}(\ell,-).\] 
Consider the following commutative diagram.
\[\xymatrix{
\Z\FI(2d,-) \otimes_\FI \Xi(-)  \ar[rr]^{\widetilde \psi \otimes_\FI \Xi(-)} &&\Z\FI(d,-) \otimes_\FI \Xi(-) \\
\Theta_{\le 2d}(m) \otimes_\FI \Xi(-)  \ar@{_(->}[u]^{} \ar[rr]^{\psi \otimes_\FI \Xi(-)} &&\Theta_{\le d}(\ell)\otimes_\FI \Xi(-)\ar@{_(->}[u]^{}
}\]
Note that the upwards maps are injective because $\Xi(n)$ is a flat $\FI^\op$-module for every $n\in \N$ by \autoref{prop:Xiflat}. This diagram simplifies using Yoneda's lemma and \autoref{prop:Thetahomcovariant} to the left square of the following diagram.
\[\xymatrix{
\Xi(-)_{2d} \ar[r]^{} &\Xi(-)_{d} \\
 \FJ_{\le 2d}(m,-) \ar@{_(->}[u]^{} \ar[r]^{} &\FJ_{\le d}(\ell,-) \ar@{_(->}[u]^{} & \ar[l]_{\varphi}  \FJ_{\le d}(m,-)
}\]
We want to show that $\id_{m} \in \FJ_{\le2d}(m,m)$ is sent to $\varphi(\id_m) \in \FJ_{\le d}(\ell,m)$. The result will follow because $\id_m \in \FJ_{\le 2d}(m,m)$ maps to $\id_m \in \FJ_{\le d}(m,m)$ by the restriction.
We can show this by proving that these two identity maps are sent to the same element in $\Xi(m)_d$ in the top row.

The identity map in $\FJ_{\le 2d}(m,m)$ is sent to $\xi_{2d, m} \in \Xi(m)_{2d}$, which then is mapped to $(\X^{-1}\circ  \gamma^\kappa)(\cb_{d,\ell})\tau_{2d,\ell} \cdot\xi_{2d,m} \in \Xi(m)_d$ by the definition of $\widetilde \psi$. On the other hand, $\varphi(\id_m) = \gamma \in \FJ_{\le d}(\ell,m)$, which is sent to $\gamma(\xi_{d,\ell}) \in \Xi(m)_d$.

Let us start proving these two elements agree by rewriting the first as
\[(\X^{-1}\circ  \gamma^\kappa)(\cb_{d,\ell})\tau_{2d,\ell} \xi_{2d,m} = (\X^{-1}\circ  \gamma^\kappa)(\cb_{d,\ell}) \zeta_{2d,\ell} \]
using \autoref{defn:pidell}. Since $\Xi(m)_d$ is a free left $\Z\SG_d$-module generated by $\xi_{d,m}$, there exist coefficients $a_{\sigma} \in \Z$ such that 
\[ \gamma(\xi_{d,\ell}) = \sum_{\sigma \in \SG_d} a_\sigma \sigma \xi_{d,m},\]
and thus
\[  \gamma^\kappa(\cb_{d,\ell}) = \sum_{\sigma \in \SG_d} a_\sigma \sigma \cb_{d,m}.\]
Then
\begin{multline*}
(\X^{-1}\circ  \gamma^\kappa)(\cb_{d,\ell})\zeta_{2d,\ell} =\X^{-1}(\sum_{\sigma \in \SG_d} a_\sigma \sigma \cb_{d,m})\zeta_{2d,\ell} \\=\sum_{\sigma \in \SG_d} a_\sigma \sigma \X^{-1}( \cb_{d,m})\zeta_{2d,\ell} = \sum_{\sigma \in \SG_d} a_\sigma \sigma \xi_{d,m} = \varphi(\xi_{d,\ell})\end{multline*}
by \autoref{lem:a1b2*24=12}.
\end{proof}

We are now ready to prove \hyperref[thm:main]{Theorems \ref{thm:main}} and \ref{thm:degrees}.

\begin{proof}[Proof of {\hyperref[thm:main]{Theorems \ref{thm:main}}} and \ref{thm:degrees}]
Let us consider the functor $\xmod{\FI} \to \xmod{\FJ}$ that sends an $\FI$-module $M$ to the $\FJ$-module $M \otimes_{\FI} \Xi(-)$. If we restrict this functor to $\FI$-modules that are presented in finite degree, then we can limit the codomain to the category of $\FJ$-modules supported in finite degree because $M \otimes_{\FI} \Xi(\ell) \cong 0$ for $\ell$ larger than the presentation degree of $M$. This functor factors through the category of $\FI$-tails because it is exact by \autoref{prop:Xiflat} and annihilates $\FI$-modules supported in finite degree by \autoref{thm:main_concrete}. We will show that the induced functor is an equivalence of categories.

Let us start with essential surjectivity. 
Let $N$ be an $\FJ$-module supported on $\{0, \dots, d\}$. Then $N$ is the cokernel of 
\[  \varphi \colon \bigoplus_i \FJ_{\le d}(m_i, - ) \longrightarrow  \bigoplus_j \FJ_{\le d}(\ell_j, -).\]
Consider the map
\[ \psi \colon \bigoplus_i \Theta_{\le 2d}(m_i) \longrightarrow  \bigoplus_j \Theta_{\le d}(\ell_j)\]
corresponding to $\varphi$ via \autoref{prop:ThetaFJopmodule}. Let $M = \coker\psi$. 
By \autoref{prop:Thetahomcovariant} and \autoref{prop:ThetaFJopmodule}, 
\[ \coker(\psi \otimes_\FI \Xi(-)) \cong N.\] Because tensoring is right exact, this cokernel coincides with $M \otimes_\FI \Xi(-)$.

We now prove faithfulness. Suppose $\varphi\colon M \to M'$ is a map of $\FI$-modules presented in finite degree, which induces the zero map $M \otimes_\FI \Xi(-) \to M' \otimes_\FI \Xi(-)$. We need to show that $\im \varphi$ has finite support. Say $M$ and $M'$ are presented in degree $\le d$. Then \autoref{thm:main_concrete} implies that 
\[ M_n \cong \bigoplus_{\ell = 0}^d (M \otimes_\FI \Xi(\ell))^{\oplus \Catalan(\ell,n)} \quad\text{and}\quad M'_n \cong \bigoplus_{\ell = 0}^d (M' \otimes_\FI \Xi(\ell))^{\oplus \Catalan(\ell,n)}\]
for all $n\ge 2d-1$. Thus $\varphi_n\colon M_n \to M'_n$ is the zero map for all $n\ge 2d-1$, which implies that the image is supported on $\{0,\dots, 2d-2\}$.

Fullness is the statement that \[\Hom_{\FI}(M,M') \to \Hom_{\FJ}(M \otimes_{\FI} \Xi(-), M' \otimes_\FI \Xi(-))\] is surjective. Fix a map of $\FJ$-modules
\[ F \colon M \otimes_{\FI} \Xi(-)\longrightarrow M' \otimes_\FI \Xi(-).\]
Because both $M$ and $M'$ are supported in finite degree, we can consider them as $\FJ_{\le d}$-modules for $d\gg 0$. Choose presentations
\[ M \otimes_{\FI} \Xi(-) \cong \coker\Big( \bigoplus_i \FJ_{\le d}(m_i, - ) \longrightarrow  \bigoplus_j \FJ_{\le d}(\ell_j, -)\Big)\]
and
\[ M' \otimes_{\FI} \Xi(-) \cong \coker\Big( \bigoplus_{i'} \FJ_{\le d}(m'_{i'}, - ) \longrightarrow  \bigoplus_{j'} \FJ_{\le d}(\ell'_{j'}, -)\Big)\]
of $M$ and $M'$ as $\FJ_{\le d}$-modules.
Expanding $F$ in terms of generators and relations, we find the commutative diagram
\[ \xymatrix{
 \bigoplus_i \FJ_{\le d}(m_i, - )  \ar[r]^{\varphi_3}\ar[d]^{\varphi_4}& \bigoplus_j \FJ_{\le d}(\ell_j, -) \ar[r]\ar[d]^{\varphi_2} & M \otimes_\FI \Xi(-) \ar[d]^{F}\ar[r] & 0\\
  \bigoplus_{i'} \FJ_{\le d}(m'_{i'}, - )  \ar[r]^{\varphi_1} &  \bigoplus_{j'} \FJ_{\le d}(\ell'_{j'}, -)\ar[r] & M'\otimes_\FI \Xi(-)\ar[r] &0.
}\]
We want to construct a commutative square
\begin{equation}\label{eq:thetasquare}
\xymatrix{
\bigoplus_i \Theta_{\le 4d}(m_i )\ar[r]^{\psi_3}\ar[d]^{\psi_4}& \bigoplus_j \Theta_{\le 2d}(\ell_j)\ar[d]^{\psi_2}\\
\bigoplus_{i'} \Theta_{\le 2d}(m'_{i'} ) \ar[r]^{\psi_1}&  \bigoplus_{j'} \Theta_{\le d}(\ell'_{j'})
}
\end{equation}
such that $\coker(\psi_3)$ and $\coker(\psi_1)$ have the same tail as $M$ and $M'$, respectively. 

To construct $\psi_1$ and $\psi_2$, we invoke \autoref{prop:ThetaFJopmodule} on $\varphi_1$ and $\varphi_2$, respectively. Then $\coker(\psi_1)$ has the same tail as $M'$ as in the proof of essential surjectivity. Next, let us construct
\[ \xymatrix{
 \bigoplus_k \FJ_{\le 2d}(n_k, - )  \ar[r]^{\widetilde\varphi_3}\ar[d]^\vartheta& \bigoplus_j \FJ_{\le 2d}(\ell_j, -) \ar[r]\ar@{->>}[d]^{\res^{2d}_d} & M \otimes_\FI \Xi(-) \ar@{=}[d]\ar[r] & 0\\
  \bigoplus_{i} \FJ_{\le d}(m_{i}, - )  \ar[r]^{\varphi_3} &  \bigoplus_{j} \FJ_{\le d}(\ell_{j}, -)\ar[r] & M\otimes_\FI \Xi(-)\ar[r] &0,
}\]
where $\widetilde\varphi_3$ is giving a presentation of $M \otimes_\FI \Xi(-)$ as an $\FJ_{\le 2d}$-module, and the map $\vartheta$ exists by projectivity of $ \bigoplus_k \FJ_{\le 2d}(n_k, - )$ as an $\FJ_{\leq 2d}$-module.  Let $\psi_3$ be the map constructed from $\widetilde \varphi_3$ using \autoref{prop:ThetaFJopmodule}. Thus, $\coker(\psi_3)$ has the same tail as $M$. Finally, let $\widetilde \varphi_4$ be a map making the diagram
\[\xymatrix{
  \bigoplus_k \FJ_{\le 2d}(n_k, - )  \ar[rr]^{\vartheta}\ar[d]^{\widetilde \varphi_4}& & \bigoplus_i \FJ_{\le d}(m_i, - )  \ar[d]^{\varphi_4}\\
\bigoplus_{i'} \FJ_{\le 2d}(m'_{i'}, - )  \ar@{->>}[rr]^{\res^{2d}_d} & &\bigoplus_{i'} \FJ_{\le d}(m'_{i'}, - ) ,
}\]
commute, using that $\bigoplus_k\FJ_{\le 2d}(n_k,-)$ is a projective $\FJ_{\le 2d}$-module. And let $\psi_4$ be constructed from $\widetilde \varphi_4$ using \autoref{prop:ThetaFJopmodule}. The constructions yield the commutative diagram
\[\resizebox{\columnwidth}{!}{
$\displaystyle\xymatrix{
\bigoplus_k \FJ_{\le 4d}(n_k, - )\ar@{->>}[rd]\ar@/_/[rddd]_{\psi_4\otimes \Xi(-)} \ar@/^/[rrrd]^{\psi_3\otimes\Xi(-)} \\
& \bigoplus_k \FJ_{\le 2d}(n_k, - )  \ar[rd]^{\vartheta}\ar[rr]^{\widetilde \varphi_3}\ar[dd]^{\widetilde \varphi_4}& &\bigoplus_j \FJ_{\le 2d}(\ell_j, -) \ar@{->>}[d] \ar[rd] \\
&&  \bigoplus_{i} \FJ_{\le d}(m_{i}, - ) \ar[d]^{\varphi_4} \ar[r]^{\varphi_3} &  \bigoplus_{j} \FJ_{\le d}(\ell_{j}, -)\ar[r] \ar[d]^{\varphi_2}& M\otimes_\FI \Xi(-)\ar[r]\ar[d]^F &0\\
&\bigoplus_{i'} \FJ_{\le 2d}(m'_{i'}, - )  \ar@{->>}[r]&\bigoplus_{i'} \FJ_{\le d}(m'_{i'}, - )\ar[r]^{\varphi_1} & \bigoplus_{j'} \FJ_{\le d}(\ell'_{j'}, -)\ar[r]& M'\otimes_\FI \Xi(-)\ar[r] &0.
}$
}\]
This implies that the square \eqref{eq:thetasquare} commutes after tensoring with $\Xi(-)$. Using the decomposition of \autoref{thm:main_concrete} and \autoref{lem:thetapres}, it follows that \eqref{eq:thetasquare} commutes in degrees $\ge 24d+1$. Let $\widetilde M$ and $\widetilde M'$ be the cokernels of $\psi_3$ and $\psi_1$, respectively. Then $\widetilde M \otimes_\FI \Xi(-) = M \otimes_\FI \Xi(-)$ and $\widetilde M' \otimes_\FI \Xi(-) = M' \otimes_\FI \Xi(-)$. For the induced map $f\colon \widetilde M \to \widetilde M'$, we get that $f \otimes \Xi(-) = F$ from the above commutative diagram.

For the second part, we need show that every $\FI$-module with polynomial degree $\le d$ is sent to a module in ${\xmod{\FJ}}$ that is supported in degrees $\{0,\dots, d\}$, which follows immediately from \autoref{cor:polytailinvariants}, and that for every $\FJ$-module supported in degrees $\{0, \dots, d\}$, there is an $\FI$-module of polynomial degree $\le d$ in its preimage, which follows  from our proof of essential surjectivity and \autoref{lem:thetapoly}.
\end{proof}

\bibliographystyle{amsalpha}
\bibliography{references}
\end{document}